\newcommand\Z{{\mathbb Z}}
\newtheorem{theorem}{Theorem}[section]
\newtheorem{corollary}[theorem]{Corollary}
\newtheorem{lemma}[theorem]{Lemma}
\newtheorem{proposition}[theorem]{Proposition}
\newtheorem{definition}[theorem]{Definition}
\newtheorem{example}[theorem]{Example}
\newtheorem{remark}[theorem]{Remark}
\begin{document}

\title{Deformation Cohomology for Braided Commutativity} 

\author{Masahico Saito} 
\address{Department of Mathematics and Statistics, 
	University of South Florida, Tampa, FL 33620} 
\email{saito@usf.edu} 

\author{Emanuele Zappala} 
\address{Department of Mathematics and Statistics, Idaho State University\\
	Physical Science Complex |  921 S. 8th Ave., Stop 8085 | Pocatello, ID 83209} 
\email{emanuelezappala@isu.edu}

\maketitle

\begin{abstract}
		Braided algebras are algebraic structures consisting of an algebra endowed with a Yang-Baxter operator, satisfying some compatibility conditions.
		Yang-Baxter Hochschild cohomology was introduced by the authors to classify infinitesimal deformations of braided algebras, and determine obstructions to higher order deformations. Several examples of braided algebras satisfy a weaker version of commutativity, 
		which is called braided commutativity
		and involves the Yang-Baxter operator of the algebra. 
		We extend the theory of Yang-Baxter Hochschild cohomology to study braided commutative deformations of braided algebras. The resulting cohomology theory classifies infinitesimal deformations of braided algebras that are braided commutative, and provides obstructions for braided commutative higher order deformations. We consider braided commutativity for Hopf algebras in detail, and obtain some classes of nontrivial examples. 
\end{abstract}

\date{\empty}

\tableofcontents

\section{Introduction}

The Yang-Baxter equation (YBE) has been studied extensively in physics and knot theory.
In the set-theoretic case, the YBE is defined for an invertible map,  called an R-matrix or braiding, 
 $R: X \times X \rightarrow X \times X$ 
on a set $X$ by 
$(R \times \mathbb 1) ( \mathbb 1 \times R) (R \times \mathbb 1)= ( \mathbb 1 \times R) (R \times \mathbb 1) ( \mathbb 1 \times R)$,
and corresponds to the Reidemeister type III move in knot theory. 
A typical solution is defined by conjugation $R(x, y) = (y, y^{-1} xy)$
 in a group with multiplication $\mu(x,y)=xy$.
%

\begin{figure}[htb]
\begin{center}
\includegraphics[width=6in]{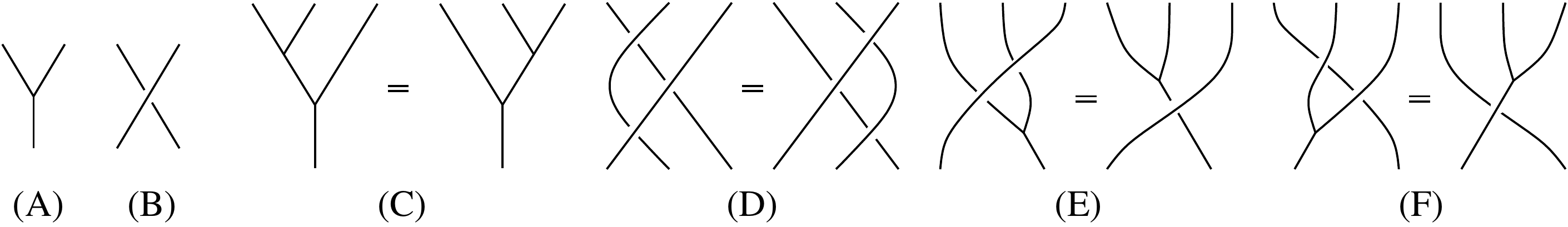}
\end{center}
\caption{
}
\label{BA}
\end{figure}

{ Figure~\ref{BA} shows a diagrammatic representation of a multiplication $\mu$ and a braiding $R$ in (A) and (B),
respectively. Diagrams are read top to bottom, so that in (A), the top two end points  receive a pair of elements $(x,y)$,
and the bottom end point receives $\mu(x,y)=xy$.
In (C) and (D), the associativity of the group multiplication $\mu$ and the YBE are depicted, respectively.
The multiplication and the braiding also satisfy the {\it compatibility} conditions represented in (E) and (F),
called the YI and IY relations.

The above example $R(x, y) = (y, y^{-1} xy)$ satisfies 
also the relation   $\mu R=\mu$  with group 
operation $\mu$, called {\it the braided commutativity}, as checked by comptation
$( \mu R ) (x, y) =\mu ( y, y^{-1} xy) = xy = \mu (x,y)$.
This relation is depicted in Figure~\ref{BCeq}. 

Figure~\ref{BA} (C)--(F) and Figure~\ref{BCeq} are some of the diagrammatic Reidemeister type moves for handlebody links \cite{Ishii08}, moves for planar  trivalent vertex graphs with crossings.
Discrete algebraic structures called multiple conjugation quandles (MCQ) that have both multiplication and braiding
have been used \cite{IIJO} to define invariants of handlebody links, and their homology theory was developed 
\cite{CIST} to define cocycle invariants \cite{CJKLS}.
See also \cite{Lebed}. For the move in Figure~\ref{BCeq},
a subcomplex of the homology theory corresponding to the braided commutativity 
was defined so that the corresponding cocycles define their invariants.

\begin{figure}[htb]
\begin{center}
\includegraphics[width=1in]{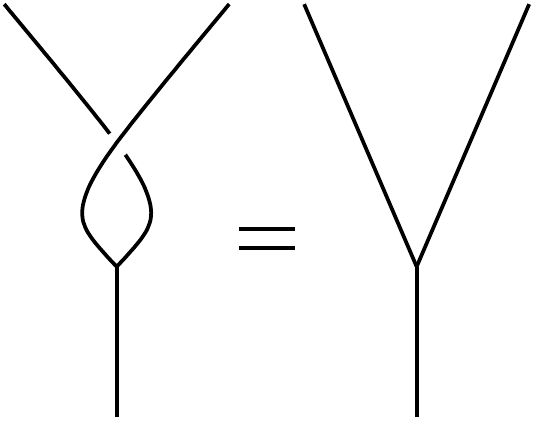}
\end{center}
\caption{
}
\label{BCeq}
\end{figure}

For an associative algebra, the YBE is defined on a tensor product, as well as the multiplication, 
and the same formula $\mu R=\mu$ defines braided commutativity.
In particular, if $X$ is a Hopf algebra, the map corresponding to conjugation
 ${\rm ad}: X \otimes X \rightarrow X$ is defined by
${\rm ad} (x \otimes y)=S(y^{(1)}) x y^{(2)}$ for simple tensors, where
$\Delta(y)=y^{(1)} \otimes y^{(2)}$ is Sweedler's notation for the comultiplication, 
and $S$ denotes the antipode.
The corresponding $R$-matrix (a solution to the YBE) is defined by
$R(x \otimes y)=y^{(1)} \otimes S(y^{(2)}) x y^{(3)}$, and it is checked that 
$R$ is indeed a solution to YBE (for example, \cite{CCES-adjoint}) and together with the multiplication, it satisfies the braided commutativity.

When an algebra has an $R$-matrix and satisfies certain compatibility relations called 
YI and IY conditions (next section), it is called a {\it braided algebra} \cite{Baez}. 
Cohomology theory for braided algebras was developed in \cite{SZ-YBH} from the
point of view of deformation theory.
In particular, the following properties were given: 
(1) the second cohomology group classifies equivalence classes of  infinitesimal deformations,
(2) a cochain map from Hopf algebra second cohomology  groups to braided algebra cohomology group was defined, and (3)  the third cohomology group is shown to be an obstruction to quadratic deformation.
It is the purpose of this paper to establish cohomology theory for braided commutativity of braided algebras and provide these three properties, including a stronger version of (3) for all deformation degrees.

Specifically, we define a cochain complex that encodes Yang-Baxter Hochschild cohomology and braided commutativity up to dimension 4 such that 
(1) the second cohomology group classifies equivalence classes of  infinitesimal deformations of 
braided commutative braided algebras,
(2) a cochain map from Hopf algebra second cohomology  groups to braided commutative algebra cohomology group is defined for 
the adjoint R-matrix, and (3)  the third cohomology group is shown to contain the obstruction to 
 higher order
deformation
of braided commutative braided algebras.

This article is organized as follows. In Section~\ref{sec:braidA_BC} we recall the definitions of braided algebra and braided commutativity, and we provide examples of braided algebras that satisfy the braided commutativity property. In Section~\ref{sec:YBH} we recall the definition of Yang-Baxter Hochschild (YBH) cohomology of a braided algebra. Section~\ref{sec:2_cocy_BC} is devoted to defining braided commutative (BC) second cohomology of braided algebras, and the classification of infinitesimal BC deformation by the second BC cohomology group. In Section~\ref{sec:Hopf} we consider Hopf algebras, and show that the second cohomology group of a Hopf algebra admits a homomorphism in the BC second cohomology group. Moreover, we show that cohomology classes that are not cobounded in Hochschild cohomology have nontrivial image. We use this fact to produce families of examples of nontrivial BC cohomology groups. The third BC cohomology group, and the study of obstructions of higher order deformations are found in Section~\ref{sec:3_cocy_BC}. Section~\ref{sec:complex} is devoted to the study of a multicomplex whose low-degree differentials coincide with BC cohomology.

\section{Braided algebras and braided commutativity}\label{sec:braidA_BC}

In \cite{SZ-YBH} deformation cohomology of braided algebras \cite{Baez} 
was studied. As constructions of braided algebras, two examples were presented:
(1) tensoring set-theoretic solutions of the YBE,  and (2) 
using the adjoint R-matrix in Hopf algebras \cite{CCES-adjoint}.

\subsection{Braided algebras}

In this section we define YI, IY, and braided algebras for which  Yang-Baxter Hochschild (co)homology
is defined, and give examples of such algebras.
These are algebras with both multiplication and braiding that satisfy certain compatibility conditions. Throughout this section, and the rest of the article, we assume that all algebras are over a unital ring $\mathbb k$,
and we use the symbol $\mathbb 1$ to indicate the identity map.
 The set of ${\mathbb k}$-algebra homomorphisms ${\rm Hom}_{\mathbb k}( - , -)$ is simply denoted by ${\rm Hom}( - , -)$. 
The review material of this section follows that of \cite{SZ-YBH}.

Recall that for an invertible map $R: V \otimes V \rightarrow V \otimes V$
of a ${\mathbb k}$-module $V$,
the equation
$$(R \otimes {\mathbb 1}) ({\mathbb 1} \otimes R) (R \otimes {\mathbb 1}) 
=({\mathbb 1} \otimes R) (R \otimes {\mathbb 1}) ({\mathbb 1} \otimes R) $$
 is called the {\it Yang-Baxter} equation (YBE),  and a solution $R$ of it is called a Yang-Baxter (YB) operator,
 or R-matrix.

\begin{definition}[\cite{SZ-YBH}]
{\rm
Let $(V, \mu)$ be a ${\mathbb k}$-algebra 
for a unital ring $\mathbb k$, with a YB operator $R: V \otimes V \rightarrow V \otimes V$.
We say that $\mu$ and $R$ satisfy the ${\rm YI}$  (resp. ${\rm IY}$) condition if they satisfy 
\begin{eqnarray}
(\mathbb 1\otimes \mu) ( R \otimes {\mathbb 1}) ({\mathbb 1} \otimes  R  ) &=& R \circ (\mu\otimes {\mathbb 1})\label{eqn:YI}, \\
(\mu\otimes \mathbb 1) ({\mathbb 1} \otimes R )(R \otimes {\mathbb 1}) &=& R\circ  (\mathbb 1\otimes \mu) ,\label{eqn:IY} 
\end{eqnarray}
respectively.
We call $(V, \mu, R)$ a YI algebra (resp. IY algebra) if Equation~\eqref{eqn:YI} (resp. Equation~\eqref{eqn:IY}) holds. If $(V, \mu, R)$ satisfies both Equation~\eqref{eqn:YI} and Equation~\eqref{eqn:IY}, we call it a braided algebra.  
}
\end{definition}

	\begin{definition}\label{def:braided_hom}
		{\rm 
			If $(V_1,\mu_1, R_1)$ and $(V_2,\mu_2, R_2)$ are braided algebras, we say that a linear map $f : V_1 \rightarrow V_2$ is a homomorphism (or morphism) of braided algebras if $f$ is an algebra homomorphism such that the following diagram commutes
			\begin{center}
				\begin{tikzcd}
					V_1\otimes V_1\arrow[rr,"f\otimes f"]\arrow[d,"R_1"] & & V_2\otimes V_2\arrow[d,"R_2"]\\
					V_1\otimes V_1\arrow[rr,"f\otimes f"]& &V_2\otimes V_2 
				\end{tikzcd} 
			\end{center}
						If $f$ admits an inverse that is itself a braided algebra homomorphism, we say that $f$ (and its inverse) is a braided isomorphism. 
		}
	\end{definition}

\begin{definition}[\cite{Baez}]
{\rm

A braided algebra $(V, \mu, R)$ is said to have, or satisfy, {\it braided commutativity} if 
$\mu R = \mu$ holds. 
}
\end{definition}

\begin{definition}
{\rm
Two braided algebras that satisfy braided commutativity are {\it equivalent} if they are isomorphic as braided algebras.
}
\end{definition}

For the rest of the section, we provide examples of braided algebras that satisfy braided commutativity.

\subsection{Tensorized braid group representations}

The following construction is found in \cite{SZ-YBH},
 and it  is a tensorized version of 
 multiple conjugation quandles (MCQs) \cite{CIST}.

First, recall that a \textit{quandle} 
is a non-empty set $X$ with a binary operation $*:X\times X\to X$ satisfying the following axioms.
\begin{itemize}
\item[(1)] For any $a\in X$, we have $a*a=a$.
\item[(2)] For any $a\in X$, the map $S_a:X\to X$ defined by $S_a(x)=x*a$ is a bijection.
\item[(3)] For any $a,b,c\in X$, we have $(a*b)*c=(a*c)*(b*c)$.
\end{itemize}
A {\it rack} is a set with an operation that satisfies (2) and (3).

\begin{definition}[\cite{Ishii08}]
A \textit{multiple conjugation quandle (MCQ)} $X$ is the disjoint union of groups $G_\lambda$,
where $\lambda$ is an element of an index set $\Lambda$, 
with a binary operation $*:X\times X\to X$ satisfying the following axioms.
\begin{itemize}
\item[(1)] For any $a,b\in G_\lambda$, we have $a*b=b^{-1}ab$.
\item[(2)] For any $x\in X$, $a,b\in G_\lambda$, we have $x*e_\lambda=x$ and $x*(ab)=(x*a)*b$, where $e_\lambda$ is the identity element of $G_\lambda$.
\item[(3)] For any $x,y,z\in X$, we have $(x*y)*z=(x*z)*(y*z)$.
\item[(4)] For any $x\in X$, $a,b\in G_\lambda$, 
we have $(ab)*x=(a*x)(b*x)$ in some group $G_\mu$.
\end{itemize}
\end{definition}

We call the group $G_\lambda$ a \textit{component} of the MCQ.
An MCQ is a type of quandle that can be decomposed as a union of groups, and the quandle operation in each component is given by conjugation.
Moreover, there are compatibilities, (2) and (4),  between the group and quandle operations.
In \cite{Ishii08}, concrete examples of MCQs are presented.

\begin{example}[\cite{SZ-YBH}] \label{ex:MCQ}
{\rm 
Let $X=\sqcup_{\lambda \in \Lambda} G_\lambda$ be an MCQ with a quandle operation $*$. 
Let ${\mathbb k}[X]$ be the free ${\mathbb k}$-module generated by $X$.

Define a multiplication $\mu : {\mathbb k}[X] \otimes {\mathbb k}[X] \rightarrow {\mathbb k}[X]$
on generators by $\mu (x\otimes y):= x y $ if $x, y \in G_\lambda$ for some $\lambda \in \Lambda$, 
and 
$\mu (x\otimes y) :=0$ otherwise, and extended linearly.
Define $R: {\mathbb k}[X]\otimes {\mathbb k}[X] \rightarrow {\mathbb k}[X] \otimes {\mathbb k}[X]$
on generators $x, y \in X$ by $R(x \otimes y):=  y \otimes (x*y)$ and extended linearly.
Then ${\mathbb k}[X]$ is a braided algebra,  as we now proceed to show.

If two of $x, y, z \in X$ belong to  distinct $G_\lambda$'s, then the triple product is $0$, 
and if all belong to the same $G_\lambda$ then it is associative, hence 
$\mu$ is associative.
The YI and IY conditions, respectively,  follow from the conditions 
 $(ab)*x=(a*x)(b*x)$ and $x*(ab)=(x*a)*b$ 
 for  any $x\in X$, $a,b\in G_\lambda$ on generators, and if $a, b $ belong to distinct $G_\lambda$ then
 the values are $0$ for both sides of the equations, hence ${\mathbb k}[X]$ is a braided  algebra.
}
\end{example}

\begin{lemma}
The preceding  example satisfies  braided commutativity.
\end{lemma}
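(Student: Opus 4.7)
The plan is to verify the identity $\mu R = \mu$ directly on tensor products of generators $x \otimes y$ with $x, y \in X$, and then extend by ${\mathbb k}$-linearity. The definition of $\mu$ forces a case split: either $x$ and $y$ lie in a common component $G_\lambda$, or they lie in distinct components. In the first case, MCQ axiom~(1) gives $x * y = y^{-1}xy \in G_\lambda$, so $R(x \otimes y) = y \otimes (y^{-1}xy)$ is supported on $G_\lambda \otimes G_\lambda$, and applying $\mu$ yields $y \cdot (y^{-1}xy) = xy = \mu(x \otimes y)$, which matches.

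For the second case, suppose $x \in G_\lambda$ and $y \in G_\nu$ with $\lambda \neq \nu$, so that $\mu(x \otimes y) = 0$. The goal is to show that $x * y$ does not lie in the same component as $y$, forcing $\mu(y \otimes (x*y)) = 0$ as well. I would argue this by analyzing $S_y \colon X \to X$, $z \mapsto z * y$, which is a bijection by quandle axiom~(2). On $G_\nu$, MCQ axiom~(1) says $S_y$ is just conjugation by $y$, hence a bijection of $G_\nu$ onto itself; on the other hand, MCQ axiom~(4) (together with (1) and the fact that $G_\lambda$ is generated by its elements under multiplication) implies that $S_y$ sends each component $G_\lambda$ into a single component $G_\mu$ depending only on $\lambda$ and $y$. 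Because $S_y$ is a global bijection of $X$ and already accounts for all of $G_\nu$ as the image of $G_\nu$, the component $G_\mu$ receiving $S_y(G_\lambda)$ must be disjoint from $G_\nu$. Thus $x * y \in G_\mu$ with $\mu \neq \nu$, and the right-hand side vanishes as desired.

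The main obstacle I anticipate is precisely the cross-component case: braided commutativity reduces there to showing that $S_y$ permutes the set of components and fixes the component of $y$, a structural fact not explicitly listed among the MCQ axioms. Once this is extracted by combining the conjugation axiom~(1), the bijectivity from quandle axiom~(2), and the distributivity axiom~(4), the remaining diagonal case and the extension by linearity are formal.
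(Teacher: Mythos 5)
Your proof is correct, and on the main case it coincides with the paper's argument: when $x,y$ lie in a common component, $R(x\otimes y)=y\otimes y^{-1}xy$ and $\mu(y\otimes y^{-1}xy)=xy=\mu(x\otimes y)$ is exactly the group-conjugation computation the paper invokes. Where you go beyond the paper is the cross-component case. The paper's proof only notes that $\mu(x\otimes y)$ vanishes when $x$ and $y$ lie in distinct components, leaving implicit the fact that $\mu R(x\otimes y)=\mu\bigl(y\otimes (x*y)\bigr)$ also vanishes there; you supply the missing justification. Your argument for it is sound: axiom~(4) forces $S_y$ to carry each component into a single component, axiom~(1) makes $S_y|_{G_\nu}$ a surjection of $G_\nu$ onto itself (conjugation by $y$), and injectivity of $S_y$ (quandle axiom~(2)) then prevents $S_y(G_\lambda)$ from meeting $G_\nu$ when $\lambda\neq\nu$, so $y$ and $x*y$ lie in distinct components and the product is $0$. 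This is a genuine, if small, strengthening of the written proof rather than a different route; the paper's version is shorter but silently assumes exactly the structural fact you prove.
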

\begin{proof}

The multiplication $\mu(x\otimes y)$ for $x, y \in X$ vanishes unless $x, y \in G_\lambda$, and 
in the case $x, y \in G_\lambda$, $\mu(x \otimes y)=xy$ is the group multiplication.
Since the braiding operation $(x, y) \mapsto (y, y^{-1} xy)$ satisfies braided commutativity with the group multiplication, the claim follows.
\end{proof}

\begin{remark}	
{\rm 
A natural analogue of braided commutativity is formulated as follows.
Let $X$ be a set with an associative monoid structure $\mu: X \times X \rightarrow X$ 
and a solution $R$  to the {\it set  theoretic Yang-Baxter equation} (sYBE) 
$$ (R \times {\mathbb 1}) ({\mathbb 1} \times R)  (R \times {\mathbb 1}) =
({\mathbb 1} \times R) (R \times {\mathbb 1}) ({\mathbb 1} \times R) .$$
Then $(X, \mu, R)$ is said to satisfy the braided commutativity if $\mu R=\mu$ holds.

These are examples in set-theoretic case $(X, \mu, R)$ coming from 
Wada's representations of braid groups in free group automorphism groups \cite{Wada}. 
These  representations are described by 
 $(x,y) \mapsto (u(x,y), v(x,y))$ for two letters $x,y$, where 
$u$ and $v$ are words in $x$ and $y$.
The representation of $n$-braids $B_n$ in ${\rm Aut}(F_n)$, the automorphism group
of the free group on $n$ letters, $\{ x_1, \ldots, x_n\}$, is given by
$$(x_1, \ldots, x_n) \mapsto (x_1, \ldots, x_{i-1} , u(x_i, x_{i+1}), v(x_i, x_{i+1}), x_{i+1}, \ldots, x_n)$$
for a standard braid generator $\sigma_i \in B_n$. The words $u$, $v$ have to satisfy certain equations for this to be well-defined braid group representation, and Wada provided a list of such words. In particular, these representations provide
solutions to the sYBE.
These solutions were further extended via Fox calculus in \cite{CSsYBE} and birack cocycles in  \cite{CSSES}. 
Wada's conjecture that  the list was complete was proved by Ito~\cite{Ito}. 
Here we provide those in Wada's list that satisfy the braided commutativity, as checked by simple calculations:
$$ (x,y) \mapsto (u(x,y), v(x,y)) =(y, y^{-1} xy), \ (y^{-1}, yxy), \ (xy^{-1} x^{-1}, xy^2).$$

Following the standard procedure of producing examples in tensors of modules
from set-theoretic case using Hopf algebras, we 
obtain the following list, 
as verified by Hopf algebra axioms:
$$R(x \otimes y)=y^{(1)} \otimes S( y^{(2)}) x y^{(3)}, \ 
S( y^{(1)} ) \otimes y^{(2)} x y^{(3)}, \ 
x^{(1)} S( y^{(1)} ) S( x^{(2)} ) \otimes x^{(3)} y^{(2)} y^{(3)} .$$
Although these Wada's braiding maps satisfy braided commutativity,  only the conjugation $(x, y) \mapsto (y, y^{-1}xy)$ satisfies
the discrete versions of the YI or IY  relations.
Therefore, although they provide examples of braided commutativity,
the deformation cohomology of this paper cannot be applied to the other two braidings, since they do not provide
braided algebras to which our theory applies.
}
\end{remark}


\subsection{Hopf algebra with R-matrix associated to adjoint map}\label{subsec:adR}

In this section we briefly recall the notion of Hopf algebras and establish  the diagrammatic interpretation of their defining axioms.

A {\it Hopf algebra} $(X, \mu,  \eta,  \Delta, \epsilon, S)$ (a 
module over a unital ring 
$\mathbb k$,
multiplication, unit, comultiplication, counit, antipode,  respectively), is
defined as follows. 
First, recall that a bialgebra 
$X$  
is a module endowed with an algebra structure with
multiplication $\mu: X\otimes X\rightarrow X$ and unit $\eta$,  and a coalgebra structure with comultiplication $\Delta: X\rightarrow X\otimes X$ and counit $\epsilon$, such that the compatibility condition 
$$\Delta \circ  \mu = (\mu\otimes \mu)\circ ( {\mathbb 1}\otimes  \tau\otimes  {\mathbb 1}) \circ (\Delta\otimes \Delta)$$
holds,
where $\tau$ denotes the transposition $\tau(x \otimes y) = y\otimes x$ for simple tensors.  A Hopf algebra is a bialgebra endowed with a map $S: X\rightarrow X$, called {\it antipode}, satisfying the equations 
$$\mu \circ (\mathbb 1\otimes S)\circ \Delta = \eta \circ \epsilon = \mu\circ (S\otimes \mathbb 1)\circ\Delta,$$
 called the {\it antipode condition}.

\begin{figure}[htb]
\begin{center}
\includegraphics[width=2in]{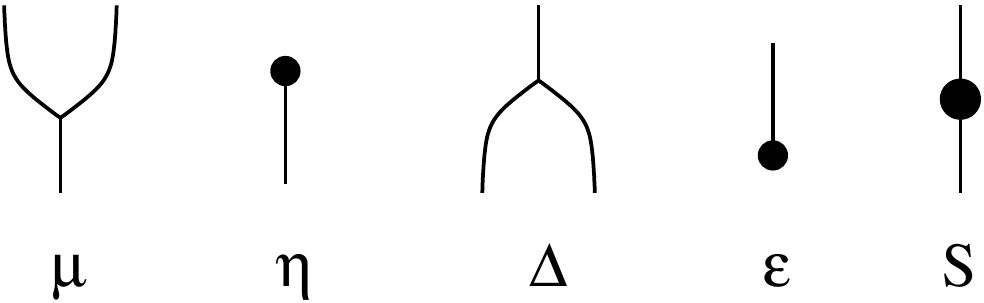}
\end{center}
\caption{ 
}
\label{hopfopB}
\end{figure}

In diagrams, an  array of 
 vertical $n$ edges represents $X^{\otimes n}$, and the diagrams are read from top to bottom, in the direction of  homomorphisms.
In Figure~\ref{hopfopB}, diagrammatic representations of multiplication $\mu$, unit $\eta$, comultiplication $\Delta$, counit $\epsilon$, antipode $S$ are depicted. 
In Figure~\ref{hopfaxiomB}, the conditions in the definition of a Hopf algebra are depicted;
(A) is the associativity, (B) is the unit condition, (C) is the compatibility between multiplication and comultiplication, 
and (D) is the antipode condition.

\begin{figure}[htb]
\begin{center}
\includegraphics[width=4.5in]{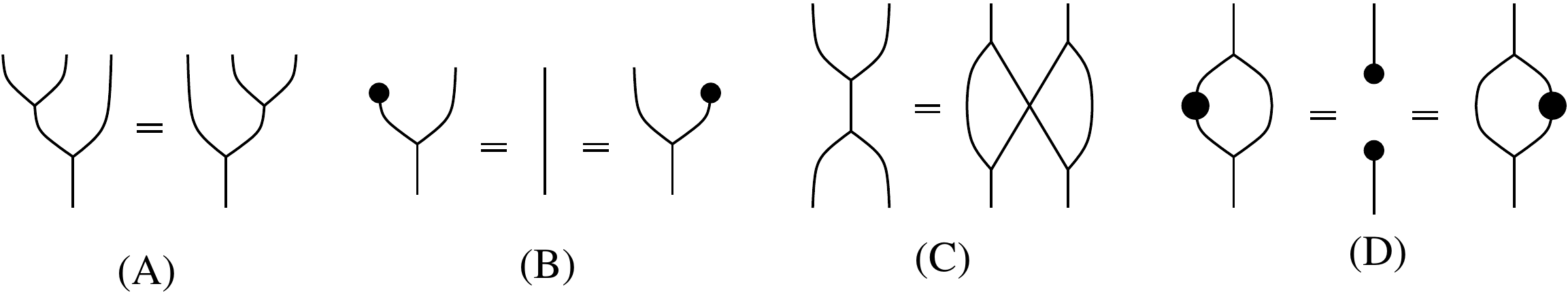}
\end{center}
\caption{  
}
\label{hopfaxiomB}
\end{figure}


For the comultiplication, we use Sweedler's notation $\Delta(x)=x^{(1)}\otimes x^{(2)}$ suppressing the summation symbol. Further, we use
$$( \Delta \otimes {\mathbb 1} ) \Delta (x) = ( x^{(11)} \otimes x^{(12)} ) \otimes x^{(2)}\quad {\rm and} \quad
( {\mathbb 1}  \otimes \Delta ) \Delta (x) =  x^{(1)} \otimes ( x^{(21)} \otimes x^{(22)} ) , $$
both of which are also written as 
$ x^{(1)} \otimes x^{(2)}  \otimes x^{(3)}$,  which is well defined due to coassociativity.

The example $R(x \otimes y)=y^{(1)} \otimes S( y^{(2)}) x y^{(3)}$ is called
 the {\it adjoint R-matrix}, induced by the {\it adjoint map}
 $x \otimes y \mapsto S( y^{(1)} ) x y^{(2)} $,
that is a Hopf algebra version of group conjugation.
A diagrammatic representation of the adjoint R-matrix is given in Figure~\ref{adR}. The cohomology of these R-matrices was studied in \cite{CCES-adjoint}.
 Further generalization of this construction to Hopf monoids in symmetric monoidal categories was studied in \cite{EZ}.

\begin{figure}[htb]
\begin{center}
\includegraphics[width=.5in]{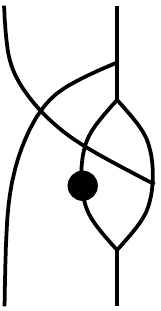}
\end{center}
\caption{ 
}
\label{adR}
\end{figure}

The following is found in \cite{SZ-YBH}.

\begin{lemma}[\cite{SZ-YBH}] \label{lem:BA}
	Let $(H,\mu,\Delta,\eta,\epsilon)$ be a Hopf algebra and let $R_H$ denote the  adjoint R-matrix defined above. Then $(H,R_H,\mu)$ is a braided algebra. 
\end{lemma}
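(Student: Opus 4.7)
The plan is to verify the three defining axioms of a braided algebra for $(H, R_H, \mu)$ in turn: the Yang--Baxter equation for $R_H$, the YI condition, and the IY condition. The cleanest route is to apply each identity to a simple tensor $x \otimes y \otimes z$ and expand in Sweedler notation, invoking only coassociativity, the counit axiom, the antipode conditions, and the bialgebra compatibility $\Delta\circ\mu=(\mu\otimes\mu)\circ(\mathbb 1\otimes\tau\otimes\mathbb 1)\circ(\Delta\otimes\Delta)$.

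For the Yang--Baxter equation itself the verification is already recorded in the literature (cf.~\cite{CCES-adjoint}); diagrammatically, it amounts to manipulating the tangle of Figure~\ref{adR} using the Hopf algebra moves of Figure~\ref{hopfaxiomB}. Among the two compatibility conditions, IY turns out to be the easier: nested application of $R_H$ followed by $\mu$ on the left-hand side of $x\otimes y\otimes z$ yields
$$y^{(1)}z^{(1)}\otimes S(z^{(2)})S(y^{(2)})\,x\,y^{(3)}z^{(3)},$$
while $R_H\circ(\mathbb 1\otimes\mu)$ on the right expands as $(yz)^{(1)}\otimes S((yz)^{(2)})\,x\,(yz)^{(3)}$; the two agree after using that $\Delta^2$ is an algebra homomorphism (from bialgebra compatibility) together with the anti-algebra property of $S$.

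The YI condition is more delicate. Applying the left-hand side to $x\otimes y\otimes z$ and repeatedly unwinding the definition of $R_H$ produces, after passing to the five-fold iterated comultiplication of $z$ (available by coassociativity), an expression of the shape
$$z^{(1)}\otimes S(z^{(2)})\,x\,z^{(3)}S(z^{(4)})\,y\,z^{(5)}.$$
The key step is to recognize the inner factor $z^{(3)}S(z^{(4)})$ as $\mu\circ(\mathbb 1\otimes S)\circ\Delta$ applied to a single intermediate Sweedler component of $z$, which by the antipode condition collapses to a scalar multiple of the unit; the counit axiom then reduces the remaining three surviving factors back to $\Delta^2(z)$, yielding $z^{(1)}\otimes S(z^{(2)})\,xy\,z^{(3)}=R_H(xy\otimes z)$, which is the right-hand side.

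I expect the main obstacle to be the Sweedler index bookkeeping in YI, where one must correctly isolate the antipode--counit pair inside a five-factor expansion without scrambling coassociativity. A purely diagrammatic verification is an attractive alternative: each of the steps above is a local move on the tangle of Figure~\ref{adR}, with the antipode condition of Figure~\ref{hopfaxiomB}(D) triggering the cancellation that matches both sides.
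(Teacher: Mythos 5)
Your verification is correct: the Sweedler expansions of both sides of YI and IY are accurate, the collapse of the inner pair $z^{(3)}S(z^{(4)})$ via the antipode and counit axioms is exactly the right move for YI, and the IY case indeed reduces to the bialgebra compatibility plus the anti-multiplicativity of $S$, with the YBE itself legitimately deferred to \cite{CCES-adjoint}. Note that the paper does not actually prove this lemma --- it is quoted from \cite{SZ-YBH} --- so your argument supplies the standard direct computation (equivalently, the diagrammatic manipulation you sketch) that the cited reference carries out; no gaps.
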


\begin{lemma}
Let $R$ be the adjoint R-matrix defined on a Hopf algebra $X$ as  above.
Then $(X, \mu, R)$ satisfies braided commutativity.
\end{lemma}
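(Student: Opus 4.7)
The plan is to compute $\mu R(x \otimes y)$ directly using Sweedler's notation and reduce it to $\mu(x\otimes y)=xy$ via the antipode and counit axioms. Starting from the definition $R(x \otimes y)=y^{(1)} \otimes S(y^{(2)}) x y^{(3)}$, applying $\mu$ gives the element $y^{(1)} S(y^{(2)}) x y^{(3)}$ in $X$. Here the triple Sweedler index means that $\Delta^2$ has been applied to $y$, so by coassociativity we may interpret $y^{(1)} \otimes y^{(2)}$ as the two outputs of $\Delta$ applied to the first tensor factor of $\Delta(y)$, while $y^{(3)}$ remains untouched.

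The next step is to group the product as $\bigl(y^{(1)} S(y^{(2)})\bigr) x\, y^{(3)}$, using associativity of $\mu$. The parenthesized expression is exactly $\mu \circ (\mathbb 1 \otimes S) \circ \Delta$ applied to the relevant Sweedler component, so by the antipode condition this collapses to $\eta \circ \epsilon$ evaluated there. Reindexing via coassociativity, the outcome is $\epsilon(y^{(1)})\, x\, y^{(2)}$, where the scalar $\epsilon(y^{(1)}) \in \mathbb k$ can be moved past $x$. Finally, the counit axiom $\epsilon(y^{(1)}) y^{(2)} = y$ yields $xy$, which equals $\mu(x \otimes y)$. Since $R$, $\mu$, $\Delta$, $S$, and $\epsilon$ are all $\mathbb k$-linear, this verification on simple tensors suffices.

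There is no real obstacle here; the only minor subtlety is keeping track of which Sweedler indices correspond to which applications of $\Delta$, so that coassociativity is invoked correctly before the antipode axiom is applied. A diagrammatic version of the proof, in the style of Figures~\ref{hopfopB} and \ref{hopfaxiomB}, would show the adjoint $R$ of Figure~\ref{adR} composed with $\mu$, and then collapse the antipode loop via axiom (D) followed by the counit/unit cancellation, producing the plain multiplication diagram of Figure~\ref{BA}(A). This parallels exactly the group-theoretic check $\mu R(x,y) = y \cdot y^{-1} x y = xy$ recalled in the introduction.
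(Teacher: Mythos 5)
Your proof is correct and follows essentially the same route as the paper's: the paper carries out the identical sequence of steps (definition, (co)associativity, commuting the resulting scalar past $x$, antipode condition, (co)unit condition) diagrammatically in Figure~\ref{adRbc}, whereas you write them out in Sweedler notation. The only point worth double-checking is the one you already flag, namely that coassociativity lets you read $y^{(1)}\otimes y^{(2)}$ as $\Delta$ applied to the first leg of $\Delta(y)$ so that the antipode axiom $\mu(\mathbb 1\otimes S)\Delta=\eta\epsilon$ applies, and you handle that correctly.
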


\begin{proof}
The proof is represented by a sequence of diagrams in Figure~\ref{adRbc}.
Each equality from the left to the right follows from:
(1) definition, (2) (co)associativity, (3) commutation with transposition map, (4) antipode condition, and (5) (co)unit condition.
\end{proof}

\begin{figure}[htb]
\begin{center}
\includegraphics[width=3in]{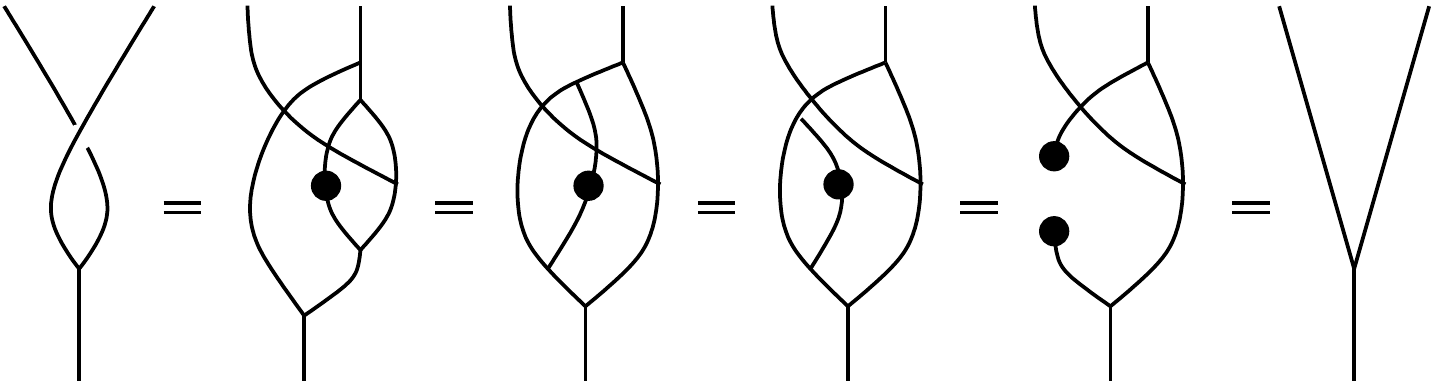}
\end{center}
\caption{ 
 }
\label{adRbc}
\end{figure}

In this paper the adjoint R-matrices are the main examples to which the deformation theory of braided commutativity is applied.

\section{Overview of Yang-Baxter Hochschild cohomology and deformation}\label{sec:YBH}

In this section, we first give a brief overview of Hochschild and  Yang-Baxter cohomologies, 
then  Yang-Baxter Hochschild cohomology that unifies them \cite{SZ-YBH}. 
Descriptions closely follow those in   \cite{SZ-YBH}.

\subsection{Hochschild cohomology and deformations}

In this section we briefly review deformation theoretic aspects of low dimensional 
Hochschild cohomology for a simplest case.
Let $(V, \mu)$ be an associative algebra with coefficient unital ring ${\mathbb k}$.
The cocahin groups of Hochschild cohomology are defined to be  
$C_{\rm H}^n(V,V)={\rm Hom}(V^{\otimes n},V)$ for $n\geq 1$.
Although Hochschild cohomology is defined for all dimensions, for defining the Yang-Baxter Hochschild cohomology, we focus on descriptions in low dimensions and establishing diagrammatic conventions.

The differentials are defined for $f \in C_{\rm H}^1(V,V)$ and $\psi \in C_{\rm H}^2(V,V)$ by 
\begin{eqnarray*}
\delta^1_{\rm H}(f)&=&
  \mu ( f \otimes {\mathbb 1}) + \mu  ( {\mathbb 1} \otimes  f ) 
  - f \mu , \\
\delta^2_{\rm H}(\psi)&=&  
\mu (\psi \otimes {\mathbb 1}) + \psi (\mu \otimes {\mathbb 1})
- \mu ( {\mathbb 1}\otimes  \psi ) - \psi ({\mathbb 1}\otimes   \mu ) .
\end{eqnarray*}
Though the signs of these maps are different from traditional definition, it does not affect forming a cochain complex, 
and this convention is convenient for diagrammatic computations, and is adopted in \cite{CCES-coalgebra}. 
Diagrammatic presentations of the differentials are depicted in Figures~\ref{Hochdiff1} and \ref{Hochdiff2}. 
Vertical $n$ edges represent $V^{\otimes n}$, and a  circle on an edge represent $f: V \rightarrow V$.
A trivalent vertex  represents a multiplication $\mu: V ^{\otimes 2} \rightarrow V$, 
and a circled trivalent vertex represent $\psi: V ^{\otimes 2} \rightarrow V$.

\begin{figure}[htb]
\begin{center}
\includegraphics[width=1.5in]{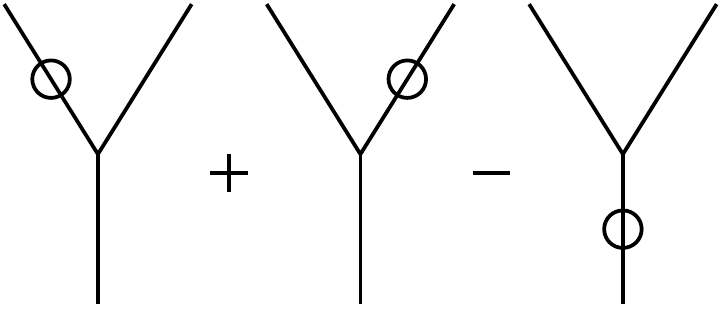}
\end{center}
\caption{
 }
\label{Hochdiff1}
\end{figure}

\begin{figure}[htb]
\begin{center}
\includegraphics[width=2.5in]{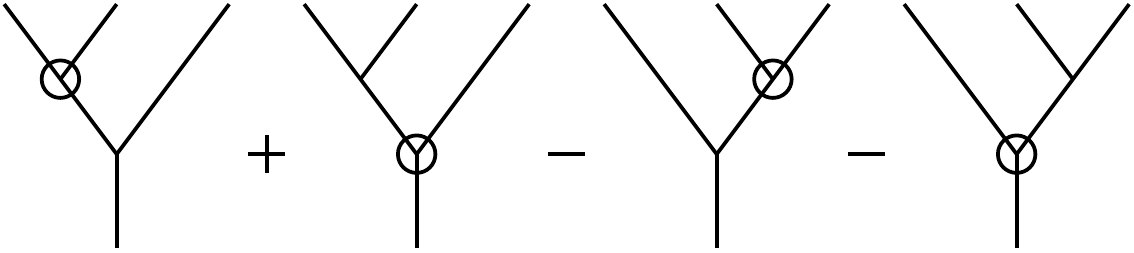}
\end{center}
\caption{ 
}
\label{Hochdiff2}
\end{figure}

	Let $(V,\mu)$ be an algebra over $\mathbb k$.  
We say that an  algebra $(V',\mu')$ over the power series $\mathbb k[[\hbar]]$
 is a deformation of $V$ if the quotient algebra $(V' / (\hbar) V' , \mu')$ coincides with 
 $(V, \mu)$. 

	Let $(V,\mu)$ be an associative algebra, 
	$\tilde V=(V \otimes {\mathbb k} [[\hbar]])/(\hbar^2 ) \cong V \oplus \hbar V$,
	and  $\psi \in Z^2_{\rm H}(V,V)$. 
 Then, setting $\tilde \mu = \mu + \hbar \psi$,
  $(\tilde V,\tilde \mu)$ is an algebra  if and only if the equations
	$\delta^2_{\rm H}( \psi)=0$ hold. 
 This  classical result \cite{Gerst} is seen by computing the associativity  in $\tilde V$,
\begin{eqnarray*}
\tilde \mu (\tilde \mu \otimes {\mathbb 1} )& =&  
( \mu + \hbar \psi ) ( (\mu  + \hbar \psi  ) \otimes {\mathbb 1} ) \\
&=&
 \mu ( \mu \otimes {\mathbb 1} ) + \hbar [ \mu ( \psi   \otimes {\mathbb 1} )+
 \psi ( \mu  \otimes {\mathbb 1} ) ] , \\ 
\tilde \mu ({\mathbb 1}  \otimes \tilde \mu ) &=& 
 ( \mu + \hbar \psi ) ( {\mathbb 1}  \otimes  (\mu  + \hbar \psi  ) \\
 &=&
 \mu ({\mathbb 1}  \otimes  \mu ) + \hbar[ 
 \mu (  {\mathbb 1}   \otimes  \psi)+
  \psi (  {\mathbb 1}  \otimes \mu ) ] . 
  \end{eqnarray*}

In this case, we also have $\tilde \mu \equiv_{(\hbar)} \mu$, meaning $\tilde \mu$ coincides with $\mu$ modulo $(\hbar)$, and we say that $(\tilde V, \tilde \mu)$ is an infinitesimal, or first, deformation   
of $(V, \mu)$. 
 Thus we say  that the primary obstruction to the first deformation vanishes if and only if $\delta^2_{\rm H}(\psi)=0$.

\subsection{Yang-Baxter cohomology and deformations}

In this section we review  Yang-Baxter (deformation) cohomology.
Let $(V, R)$ be a ${\mathbb k}$-module with the R-matrix $R : V^{\otimes 2} \rightarrow V^{\otimes 2}$.
The cochain groups are defined by 
$C^0_{\rm YB}(V,V)=0$ and 
$C^n_{\rm YB}(V,V)={\rm Hom} (V^{\otimes n},V^{\otimes n})$ for $n>0$.
We define the differentials for $f \in C^1_{\rm YB}(V,V) $
and $\phi \in C^2_{\rm YB}(V,V)$ by 
  \begin{eqnarray*}
 \delta^1_{\rm YB} (f) 
 &=&
 R ( f \otimes {\mathbb 1}) + R ( {\mathbb 1} \otimes  f ) 
 -  ( f \otimes {\mathbb 1}) R - ( {\mathbb 1} \otimes  f )  R ,\\
\delta^2_{\rm YB} (\phi) &=&
(R \otimes {\mathbb 1} ) ( {\mathbb 1}  \otimes R ) ( \phi \otimes  {\mathbb 1} )
+ (R \otimes {\mathbb 1} ) ( {\mathbb 1}  \otimes \phi ) ( R \otimes  {\mathbb 1} )
+  (\phi \otimes {\mathbb 1} ) ( {\mathbb 1}  \otimes R ) ( R \otimes  {\mathbb 1} ) \\
& & -  ( {\mathbb 1}  \otimes R ) ( R \otimes  {\mathbb 1} ) ( {\mathbb 1}  \otimes \phi ) 
- ( {\mathbb 1}  \otimes R ) ( \phi \otimes  {\mathbb 1} ) ( {\mathbb 1}  \otimes  R ) 
- ( {\mathbb 1}  \otimes \phi ) ( R \otimes  {\mathbb 1} ) ( {\mathbb 1}  \otimes  R ) .
\end{eqnarray*}
The differentials are depicted in Figures~\ref{YBdiff1} and \ref{YBdiff2}.
The cochains $f$ and $\phi$ are represented by circles on an edge and a crossing, 
respectively. 

\begin{figure}[htb]
\begin{center}
\includegraphics[width=2.2in]{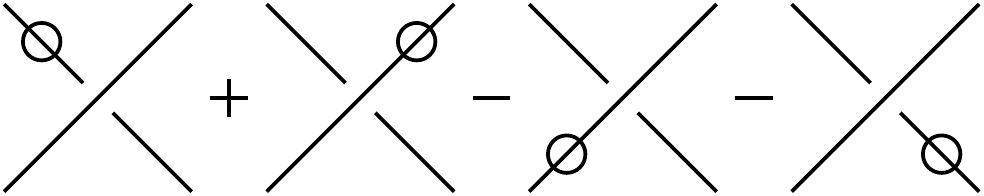}
\end{center}
\caption{ }
\label{YBdiff1}
\end{figure}

\begin{figure}[htb]
\begin{center}
\includegraphics[width=3.5in]{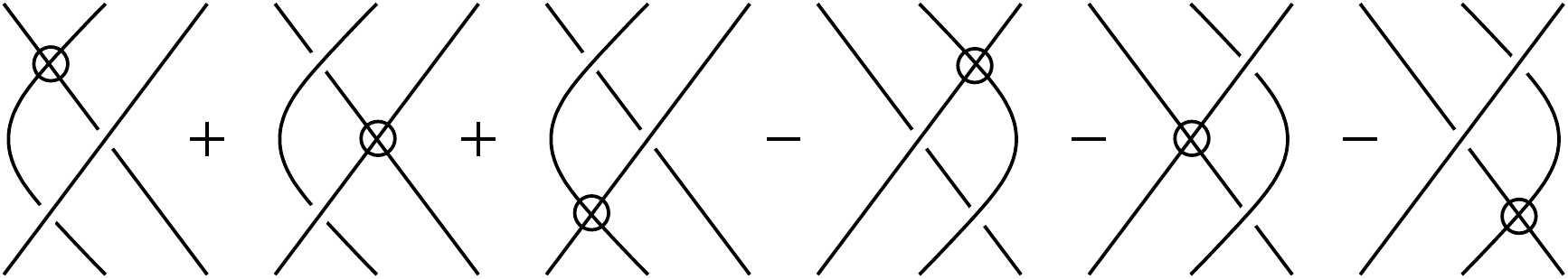}
\end{center}
\caption{}
\label{YBdiff2}
\end{figure}

\begin{lemma}[\cite{SZ-YBH}] \label{lem:R}
The sequence 
$C^0_{\rm YB}(V,V) \rightarrow 
C^1_{\rm YB}(V,V) \stackrel{\delta^1_{\rm YB}}{\longrightarrow}
C^2_{\rm YB}(V,V) \stackrel{\delta^2_{\rm YB}}{\longrightarrow}
C^2_{\rm YB}(V,V)
$
defines a cochain complex.
\end{lemma}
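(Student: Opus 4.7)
The lemma reduces to verifying the single equation $\delta^2_{\rm YB} \circ \delta^1_{\rm YB} = 0$, since $C^0_{\rm YB}(V,V) = 0$ makes the rest of the claim vacuous. The plan is to recognize $\delta^1_{\rm YB}(f)$ as a commutator with $R$ on $V^{\otimes 2}$, extend this description to $V^{\otimes 3}$, and then finish with a single application of the Yang-Baxter equation.

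Fix $f \in C^1_{\rm YB}(V,V)$ and set $F := f \otimes \mathbb 1 + \mathbb 1 \otimes f$ on $V^{\otimes 2}$, so that $\phi := \delta^1_{\rm YB}(f) = RF - FR = [R, F]$. On $V^{\otimes 3}$ write $R_{12} := R \otimes \mathbb 1$, $R_{23} := \mathbb 1 \otimes R$, and $F^{(3)} := f \otimes \mathbb 1 \otimes \mathbb 1 + \mathbb 1 \otimes f \otimes \mathbb 1 + \mathbb 1 \otimes \mathbb 1 \otimes f$. The first observation is that
$$ \phi \otimes \mathbb 1 = [R_{12}, F^{(3)}], \qquad \mathbb 1 \otimes \phi = [R_{23}, F^{(3)}], $$
because the summand of $F^{(3)}$ supported on the third (respectively first) strand commutes with $R_{12}$ (respectively $R_{23}$), so the extra terms drop out.

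Substituting these two commutator expressions into the six-term definition of $\delta^2_{\rm YB}(\phi)$ produces twelve monomials of the form $(\text{braid})\cdot F^{(3)}$ or $F^{(3)}\cdot (\text{braid})$. The key combinatorial point is that the $\delta^2_{\rm YB}$ formula is arranged so that these monomials telescope: for example, the $-R_{12}R_{23}F^{(3)}R_{12}$ coming from the first term is cancelled by the $+R_{12}R_{23}F^{(3)}R_{12}$ coming from the second, and so on through three such cancellations on each of the two sign groups. Only the four outermost monomials survive, and they collapse to
$$ \delta^2_{\rm YB}(\phi) = \bigl(R_{12}R_{23}R_{12} - R_{23}R_{12}R_{23}\bigr)F^{(3)} - F^{(3)}\bigl(R_{12}R_{23}R_{12} - R_{23}R_{12}R_{23}\bigr), $$
which vanishes identically by the Yang-Baxter equation.

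The only real obstacle is the sign bookkeeping in the telescoping: one must match each $+\phi$-term of $\delta^2_{\rm YB}$ with the adjacent $-\phi$-term so that the $FR$-piece of one and the $RF$-piece of the other cancel. Diagrammatically this is transparent — it is exactly the cancellation visible in Figure~\ref{YBdiff2} when the two Reidemeister-III-shaped trios are superimposed — but algebraically it is a short, mechanical check that I would perform directly from the expansion above.
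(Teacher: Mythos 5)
Your proof is correct. The identification $\delta^1_{\rm YB}(f)=[R,F]$ with $F=f\otimes\mathbb 1+\mathbb 1\otimes f$, the lift to $\phi\otimes\mathbb 1=[R_{12},F^{(3)}]$ and $\mathbb 1\otimes\phi=[R_{23},F^{(3)}]$, and the telescoping of the twelve monomials down to $[\,R_{12}R_{23}R_{12}-R_{23}R_{12}R_{23},\,F^{(3)}\,]=0$ all check out; I verified the cancellation pattern term by term and the signs work exactly as you describe. Note that the paper itself gives no proof of this lemma --- it is imported from the reference [SZ-YBH], where the verification is carried out by direct diagrammatic expansion (the content of Figure~\ref{YBdiff2}). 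Your commutator packaging is a cleaner and slightly more conceptual route to the same computation: it makes it manifest that the only input needed is the Yang--Baxter equation itself, whereas the raw expansion obscures this behind twelve-term bookkeeping. The one step you defer (``a short, mechanical check'') is genuinely mechanical and succeeds, so there is no gap.
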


A variant of deformation cohomology was defined in all dimensions in \cite{Eisermann1}.

\subsection{Yang-Baxter Hochschild cohomology}

In this section we review Yang-Baxter Hochschild cohomology up to dimension 3 defined in \cite{SZ-YBH}.
Let $V$ be a braided algebra with coefficient unital ring ${\mathbb k}$.
The cochain groups for a braided algebra $V$ with coefficients in itself up to degree $3$ are defined as follows. 
 We set $C_{\rm YBH}^0(V,V) =0$,
and  
$C^{n,k}_{\rm YBH}(V,V) = {\rm Hom}(V^{\otimes n},V^{\otimes k}) $
for $n , k >0$.
We also use different subscripts 
$$C^{n,k}_{\rm YI}(V,V) =  {\rm Hom}(V^{\otimes n},V^{\otimes k})
= C^{n,k}_{\rm IY}(V,V)$$
to distinguish different isomorphic direct summands. 
Define  
\begin{eqnarray*}
C_{\rm YBH}^1(V,V) &=& C^{1,1}_{\rm YBH}(V,V) ={\rm Hom}(V,V) , \\
  C^{2}_{\rm YBH}(V,V) &=& C^{2,2}_{\rm YBH}(V,V) \oplus C^{2,1}_{\rm YBH}(V,V), \quad {\rm and} \\
C^{3}_{\rm YBH}(V,V)  &=& C^{3,3}_{\rm YBH}(V,V)  \oplus
 C^{3,2}_{\rm YI}(V,V)  \oplus C^{3,2}_{\rm IY}(V,V) 
 \oplus  
 C^{3,1}_{\rm YBH}(V,V) . 
 \end{eqnarray*}

 The differentials are defined as follows.
 We set $\delta^1_{\rm YBH}$ to be the direct sum $\delta^1_{\rm YB}\oplus \delta^1_{\rm H}$.

   The second differential $\delta^2_{\rm YBH}$ is defined to be the direct sum of four terms
as 
   	 \begin{eqnarray*}
   	 		\delta^2_{\rm YBH} = \delta^2_{\rm YB}\oplus \delta^2_{\rm YI}\oplus \delta^2_{\rm IY}\oplus \delta^2_{\rm H} 
   	 \end{eqnarray*}  
  where $\delta^2_{\rm YB}$ and $\delta^2_{\rm H}$ map in the first ($C^{3,3}_{\rm YBH}(V,V) ={\rm Hom}(V^{\otimes 3},V^{\otimes 3})$) 
  and last $(C^{3,1}_{\rm YBH}(V,V) = {\rm Hom}(V^{\otimes 3},V)$)  direct summands of $C^3_{\rm YBH}(V,V)$, respectively,
  while $\delta^2_{\rm YI}$ and $\delta^2_{\rm IY}$ map to the middle  
two factors $C^{3,2}_{\rm YI}(V,V)$ and $C^{3,2}_{\rm IY}(V,V)$, respectively.
  Each differential is defined as follows.
  \begin{eqnarray*}
\delta^2_{\rm YI} (\phi\oplus \psi) &=&
 (\mathbb 1\otimes \psi)(R\otimes \mathbb 1)(\mathbb 1 \otimes R) + (\mathbb 1 \otimes \mu)(\phi\otimes \mathbb 1)(\mathbb 1\otimes R) \\
 & &
 + (\mathbb 1 \otimes \mu)(R\otimes \mathbb 1 )(\mathbb 1 \otimes \phi) 
-  R (\psi\otimes \mathbb 1 ) - \phi (\mu\otimes \mathbb 1). \\
\delta^2_{\rm IY}(\phi\oplus \psi) &=& 
(\psi\otimes \mathbb 1)(\mathbb 1\otimes R)(R\otimes \mathbb 1) + (\mu\otimes \mathbb 1)(\mathbb 1\otimes \phi)(R\otimes \mathbb 1) \\
& &  
+ (\mu\otimes \mathbb 1)(\mathbb 1 \otimes R)(\phi\otimes \mathbb 1)
- R(\mathbb 1 \otimes \psi)  - \phi (\mathbb 1\otimes \mu) .
\end{eqnarray*}
The differential $\delta^2_{\rm IY}(\phi\oplus \psi)$ is represented diagrammatically in Figure~\ref{IY2cocy}, where 2-cochains $\phi \in C^{2,2}_{\rm YBH}(V,V)$ and $\psi\in C^{3,1}_{\rm YBH}(V,V)$ are represented by 4-valent (resp. 3-valent) vertices 
with circles. The figure is from \cite{SZ-YBH}.  Similar diagrammatics hold for the differential $\delta^2_{\rm YI}(\phi\oplus \psi)$ as well.
 The YI and IY components of the cochain complex above are included to enforce the coherence axioms between deformed algebra structure, and deformed YB operator. In other words, they ensure that YBH $2$-cocycles satisfy Equation~\eqref{eqn:YI} and Equation~\eqref{eqn:IY}, respectively.

\begin{figure}[htb]
\begin{center}
\includegraphics[width=3in]{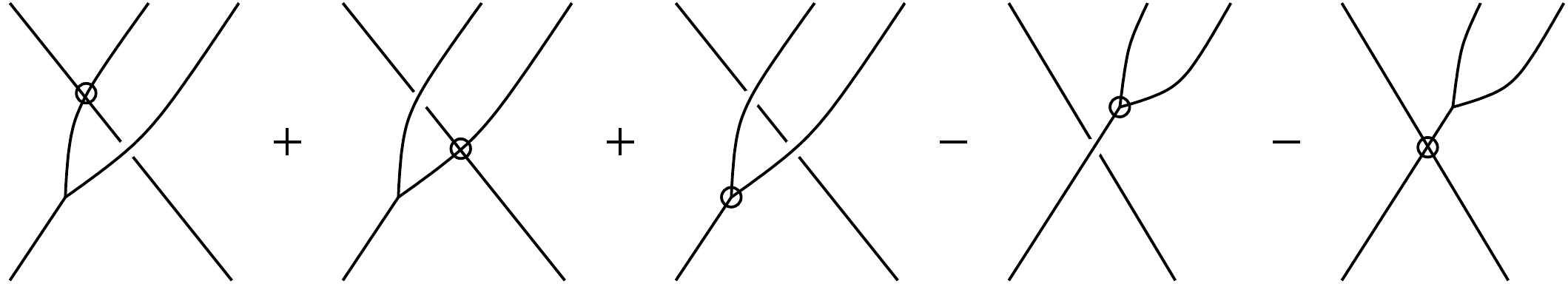}
\end{center}
\caption{}
\label{IY2cocy}
\end{figure}

\begin{proposition}[\cite{SZ-YBH}] \label{pro:cochain}
The sequence
$$ 
C_{\rm YBH}^1(V,V) 
\stackrel{\delta^1_{\rm YBH}}{\longrightarrow}
C_{\rm YBH}^2(V,V) 
\stackrel{\delta^2_{\rm YBH}}{\longrightarrow}
 C_{\rm YBH}^3(V,V) 
 $$
 defines a cochain complex.
\end{proposition}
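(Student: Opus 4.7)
The plan is to verify that $\delta^2_{\rm YBH}\circ\delta^1_{\rm YBH}=0$ on every input $f\in C^{1,1}_{\rm YBH}(V,V)={\rm Hom}(V,V)$. Since the target $C^3_{\rm YBH}(V,V)$ splits as a direct sum of four pieces, and $\delta^2_{\rm YBH}$ is defined componentwise as $\delta^2_{\rm YB}\oplus\delta^2_{\rm YI}\oplus\delta^2_{\rm IY}\oplus\delta^2_{\rm H}$, the identity reduces to four independent checks. Write $\phi=\delta^1_{\rm YB}(f)$ and $\psi=\delta^1_{\rm H}(f)$, so $\delta^1_{\rm YBH}(f)=\phi\oplus\psi$; then I need to show $\delta^2_{\rm YB}(\phi)=0$, $\delta^2_{\rm H}(\psi)=0$, $\delta^2_{\rm YI}(\phi\oplus\psi)=0$, and $\delta^2_{\rm IY}(\phi\oplus\psi)=0$.

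The two pure components are known. The identity $\delta^2_{\rm YB}\delta^1_{\rm YB}=0$ is exactly Lemma~\ref{lem:R} applied to $f$, which uses only that $R$ satisfies the Yang--Baxter equation. The identity $\delta^2_{\rm H}\delta^1_{\rm H}=0$ is the standard Hochschild coboundary identity, which follows from associativity of $\mu$ by a routine expansion of six terms that pair up and cancel. Both can be invoked directly, or reproven diagrammatically by sliding a circle along each of the three strands entering a trivalent vertex or crossing.

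The essential content is the vanishing of the two mixed components. I would treat $\delta^2_{\rm YI}$ in detail and note the IY case is symmetric. Substituting $\phi=R(f\otimes\mathbb 1)+R(\mathbb 1\otimes f)-(f\otimes\mathbb 1)R-(\mathbb 1\otimes f)R$ and $\psi=\mu(f\otimes\mathbb 1)+\mu(\mathbb 1\otimes f)-f\mu$ into
\[
\delta^2_{\rm YI}(\phi\oplus\psi)
=(\mathbb 1\otimes\psi)(R\otimes\mathbb 1)(\mathbb 1\otimes R)
+(\mathbb 1\otimes\mu)(\phi\otimes\mathbb 1)(\mathbb 1\otimes R)
+(\mathbb 1\otimes\mu)(R\otimes\mathbb 1)(\mathbb 1\otimes\phi)
-R(\psi\otimes\mathbb 1)
-\phi(\mu\otimes\mathbb 1)
\]
produces an expansion into roughly twenty terms. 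I would organize them by the position at which the circle representing $f$ sits (on one of the three input strands, or on the single output strand). Each pair of same-position terms should cancel after one application of the YI relation $(\mathbb 1\otimes\mu)(R\otimes\mathbb 1)(\mathbb 1\otimes R)=R(\mu\otimes\mathbb 1)$, which moves a trivalent vertex past two crossings, together with associativity of $\mu$ to reshuffle the purely algebraic strands. Diagrammatically, the left-hand side is a sum of planar diagrams with one circle inserted, and the YI relation acts as a local move that converts the diagrams of the first three terms into those of the last two; placing the circle on any strand commutes with that local move, yielding the cancellation. The IY identity is handled identically with IY in place of YI and the roles of the first two tensor factors swapped.

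The main obstacle is purely combinatorial bookkeeping: matching the twenty-odd expanded summands pairwise is easy to botch by hand, especially keeping track of which strand each $f$-circle rides on after a YI move is applied. The cleanest remedy is diagrammatic, along the lines of Figure~\ref{IY2cocy}: each summand becomes a trivalent-plus-crossing diagram with one decorated edge, the YI move acts locally on the undecorated part, and the cancellation becomes manifest once the decorated edges are identified. Once organized this way, the four vanishings together yield $\delta^2_{\rm YBH}\delta^1_{\rm YBH}=0$, establishing the claim.
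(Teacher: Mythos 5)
Your proposal is correct and follows essentially the same route as the paper and its reference \cite{SZ-YBH}: the paper states this proposition as review material without reproving it, but its own proofs of the analogous braided-commutative statements (Lemmas~\ref{lem:sub} and~\ref{lem:deg4}) use exactly your strategy of splitting by direct summand, substituting $\phi=\delta^1_{\rm YB}(f)$, $\psi=\delta^1_{\rm H}(f)$, and cancelling the expanded terms diagrammatically via the YI/IY relations. The only minor remark is that in the mixed components the cancellation already follows from the YI (resp.\ IY) relation together with naturality of sliding $f$ past tensor factors, so the appeal to associativity of $\mu$ is not actually needed there.
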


In \cite{SZ-YBH}, a bijective relationship between the second Yang-Baxter Hochschild cohomology group 
and the equivalence classes of infinitesimal deformations was established as follows.

	Let $(V,\mu,R)$ be a braided algebra over $\mathbb k$.  Let us consider the power series ring $\mathbb k[[\hbar]]$ over the formal variable $\hbar$, and let $(\hbar )$ denote the 
	ideal of $\mathbb k[[\hbar]]$ generated by $\hbar$.

\begin{definition}[\cite{SZ-YBH}] \label{def:inf_deform} 
	{\rm 

	Let $(V, \mu, R)$ be a braided algebra over ${\mathbb k}$. Extend $\mu$ and $R$ 
	to $\tilde V=( V \otimes_{\mathbb k} {\mathbb k}[[\hbar]] ) / (\hbar^2 ) \cong V \oplus \hbar V$ by linearly extending on $\hbar V$  and use the same notation $\mu$ and $R$ on $\tilde V$. 	We say that $(\tilde V, \tilde \mu , \tilde R)$ is an {\it infinitesimal deformation} of $(V, \mu, R)$ 
	if ${\rm Im }(\tilde \mu - \mu) \subset \hbar  V$ and 
	${\rm Im }(\tilde R - R ) \subset \hbar  ( V \otimes  V)$.
Similar definitions hold for YI algebras and IY algebras. 

	Let $(V, \mu, R)$ be a braided algebra, and let $(\tilde V, \tilde \mu, \tilde R)$ and $(\hat V, \hat \mu, \hat R)$
	be two infinitesimal deformations of $V$. Let $\tilde f: \tilde V \rightarrow \hat V$ be a homomorphism of braided
	algebras (Definition~\ref{def:braided_hom}). Then, we say that $f$ is a {\it homomorphism of infinitesimal deformations} if $\tilde f |_{V}  : V \rightarrow V$
	is the identity map $\mathbb 1_V$. A homomorphism of infinitesimal deformations that is invertible through a
	homomorphism of infinitesimal deformations is said to be an {\it isomorphism of infinitesimal deformations}. 

}
\end{definition}

If $(\tilde V, \tilde \mu , \tilde R)$ is an infinitesimal deformation of $(V, \mu, R)$, then we can write
\begin{eqnarray*}
			\tilde \mu &=& \mu + \hbar \psi\\
			\tilde R  &=& R + \hbar \phi,
		\end{eqnarray*}
		where $\psi : V\otimes V\rightarrow V$ and $\phi: V\otimes V \rightarrow V\otimes V$.

\begin{theorem}[\cite{SZ-YBH}] \label{thm:classification}
	Let $(V,\mu,R)$ be a braided algebra. Then the  Yang-Baxter Hochschild second cohomology group classifies the infinitesimal deformations of $(V,\mu,R)$. 
\end{theorem}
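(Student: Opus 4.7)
The plan is to exhibit an explicit bijection between $H^2_{\rm YBH}(V,V)$ and the set of isomorphism classes of infinitesimal deformations of $(V,\mu,R)$, following the classical Gerstenhaber pattern but with four coherence conditions running in parallel.

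First I would show that a $2$-cocycle gives rise to a deformation. Write a generic element of $C^2_{\rm YBH}(V,V)$ as a pair $(\phi,\psi)$ with $\phi\in C^{2,2}_{\rm YBH}$ and $\psi\in C^{2,1}_{\rm YBH}$, and set $\tilde\mu=\mu+\hbar\psi$, $\tilde R=R+\hbar\phi$ on $\tilde V=V\oplus\hbar V$. Expanding each of the four defining identities of a braided algebra modulo $(\hbar^2)$, the order-zero terms reproduce the identities for $(\mu,R)$, and the order-$\hbar$ terms read off exactly as $\delta^2_{\rm H}(\psi)=0$ (associativity), $\delta^2_{\rm YB}(\phi)=0$ (YBE), $\delta^2_{\rm YI}(\phi\oplus\psi)=0$ (YI), and $\delta^2_{\rm IY}(\phi\oplus\psi)=0$ (IY). The calculation for associativity is the one carried out explicitly in the Hochschild subsection; the YBE calculation is the one underlying Lemma~\ref{lem:R}; the YI and IY ones are done the same way, using that the direct-sum differential was defined precisely so that its $\hbar$-linear coefficient is the infinitesimal perturbation of Equations~\eqref{eqn:YI} and \eqref{eqn:IY}. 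Hence $(\tilde V,\tilde\mu,\tilde R)$ is a braided algebra over $\mathbb k[[\hbar]]/(\hbar^2)$ and is by construction an infinitesimal deformation.

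Next I would address equivalence. Given two infinitesimal deformations $(\tilde V,\tilde\mu,\tilde R)$ and $(\hat V,\hat\mu,\hat R)$ with associated $2$-cocycles $(\tilde\phi,\tilde\psi)$ and $(\hat\phi,\hat\psi)$, any homomorphism of infinitesimal deformations $\tilde f$ must restrict to $\mathbb 1_V$ on $V$, and modulo $(\hbar^2)$ takes the form $\tilde f=\mathbb 1+\hbar f$ for some $f\in C^{1,1}_{\rm YBH}(V,V)$. Writing out $\hat\mu(\tilde f\otimes\tilde f)=\tilde f\tilde\mu$ and $(\tilde f\otimes\tilde f)\tilde R=\hat R(\tilde f\otimes\tilde f)$ and collecting the $\hbar^1$ terms yields $\hat\psi-\tilde\psi=\delta^1_{\rm H}(f)$ and $\hat\phi-\tilde\phi=\delta^1_{\rm YB}(f)$; in other words, $(\hat\phi,\hat\psi)-(\tilde\phi,\tilde\psi)=\delta^1_{\rm YBH}(f)$. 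Conversely, given $f$, the map $\tilde f=\mathbb 1+\hbar f$ is automatically invertible modulo $(\hbar^2)$ with inverse $\mathbb 1-\hbar f$, so the condition is both necessary and sufficient. This establishes that the assignment $(\phi,\psi)\mapsto(\tilde V,\tilde\mu,\tilde R)$ descends to a well-defined map $H^2_{\rm YBH}(V,V)\to\{\text{isom.\ classes of infinitesimal deformations}\}$, that it is injective, and (since every infinitesimal deformation is of the form $\mu+\hbar\psi$, $R+\hbar\phi$ for some cochains) surjective.

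The main obstacle is the bookkeeping in the first step: one must verify that the four summands of $\delta^2_{\rm YBH}$ reproduce, term by term, the $\hbar^1$ coefficients of the four compatibility axioms, with signs matching the convention chosen in Section~\ref{sec:YBH}. The YI and IY computations each produce six terms corresponding to the two ways of inserting $\phi$ or $\psi$ into each of the three strands on the left-hand side of Equations~\eqref{eqn:YI}--\eqref{eqn:IY} and the two ways of doing so on the right, and the diagrammatic description in Figure~\ref{IY2cocy} is what makes this bookkeeping transparent. Everything else (the inverse map, the coboundary calculation for equivalence, associativity of composition of the candidate isomorphisms) reduces to routine manipulations modulo $(\hbar^2)$.
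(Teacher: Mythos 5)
Your proposal is correct and follows essentially the same route as the paper: this theorem is quoted from \cite{SZ-YBH} without proof here, but the argument indicated there (and used repeatedly in Section~\ref{sec:2_cocy_BC}) is exactly your two-step Gerstenhaber-style scheme --- the order-$\hbar$ coefficients of the four braided-algebra axioms reproduce the four summands of $\delta^2_{\rm YBH}$, and isomorphisms of infinitesimal deformations $\mathbb 1+\hbar f$ correspond to coboundaries $\delta^1_{\rm YBH}(f)$. The only quibble is cosmetic: the YI/IY expansions each yield five terms (three from the left-hand side, two from the right), not six, matching the five terms of $\delta^2_{\rm YI}$ and $\delta^2_{\rm IY}$ as defined in Section~\ref{sec:YBH}.
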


\section{Deformation 2-cocycles for braided commutativity}\label{sec:2_cocy_BC}

In this section we define cohomology theory for braided commutativity up to dimension 2.
Let $(V, \mu, R)$ be a braided algebra.

\begin{definition}
	{\rm 
				Let $(V, \mu, R)$ be a braided commutative algebra, and let $(\hat V, \hat \mu, \hat R)$ be a braided algebra infinitesimal deformation of $V$, in the sense of Definition~\ref{def:inf_deform}. Then, we say that $\hat V$ is a braided commutative infinitesimal deformation, or BC infinitesimal deformation for short, if  $(\hat V, \hat \mu, \hat R)$ is braided commutative. 
				
				If $\hat V_1$ and $\hat V_2$ are two BC infinitesimal deformations, we say that a braided algebra homomorphism is a braided commutative (BC) homomorphism. Two BC infinitesimal deformations are said to be isomorphic, if there exists a braided algebra isomorphism between them.
	}
\end{definition}

\begin{definition}
	{\rm  		We let $D(V, \mu, R)$ indicate the abelian group of equivalence classes of of infinitesimal braided commutative deformations of the braided algebra $(V, \mu, R)$.
		The abelian structure is defined as follows. 
		For two BC deformations $(\hat V_i, \hat \mu_i, \hat R_i)$, $i=1,2$,  of  $(V, \mu, R)$,
		where $\hat \mu_i = \mu + \hbar \psi_i$ and $\hat R_i = R + \hbar \phi_i$, the addition is defined as 
		the deformation of $V$ by 
		$\mu +  \hbar (\psi_1 + \psi_2)$ and $R + \hbar (\phi_1 + \phi_2)$. A direct computation shows that this is well defined.
	}
\end{definition}

\begin{definition}
{\rm
The cochain groups of braided  commutative cohomology are defined as follows:
\begin{eqnarray*}
C^1_{\rm BC}(X,X) &=& {\rm Hom}(X,X),  \\
C^2_{\rm BC}(X,X) & =&
C^2_{\rm YB }(X,X)
\oplus  C^2_{\rm  H}(X,X) , \\
C^3_{\rm BC}(X,X) &=&
C^3_{\rm YB}(X,X)  \oplus 
C^{3,2}_{\rm IY}(X,X)  \oplus  
C^{3,2}_{\rm YI}(X,X)    \oplus
C^3_{\rm H}(X,X) 
 \oplus  
C^{2,1}_{\rm BC} (X,X),
 \end{eqnarray*}
where we set $C^{n,m}_{\rm BC} (X,X) = C^{n,m}_{\rm YBH} (X,X)$, and introduce the suffix BC to explicitly keep track of the direct summands that refer to the braided commutative constraint, as it will be clear in the proofs below.
 We note that $C^3_{\rm YBH} (X,X)=C^3_{\rm YB}(X,X)\oplus C^3_{\rm YI}(X,X) 
\oplus  C^3_{\rm IY}(X,X)\oplus C^3_{\rm H}(X,X) $. 
The differentials are defined as follows:
\begin{eqnarray*}
\delta^1_{\rm BC} (f) &=& \delta^1_{\rm YB} (f) + \delta^1_{\rm Hf} (f)  \ =\ \delta^1_{\rm YBH} 
(f), \\
\delta^2_{\rm BC} (\phi \oplus  \psi ) &=&  \delta^2_{\rm YBH}( \phi \oplus  \psi )+  (\mu  \phi +  R \psi -  \psi ).
\end{eqnarray*}

}
\end{definition}

\begin{figure}[htb]
\begin{center}
\includegraphics[width=1.5in]{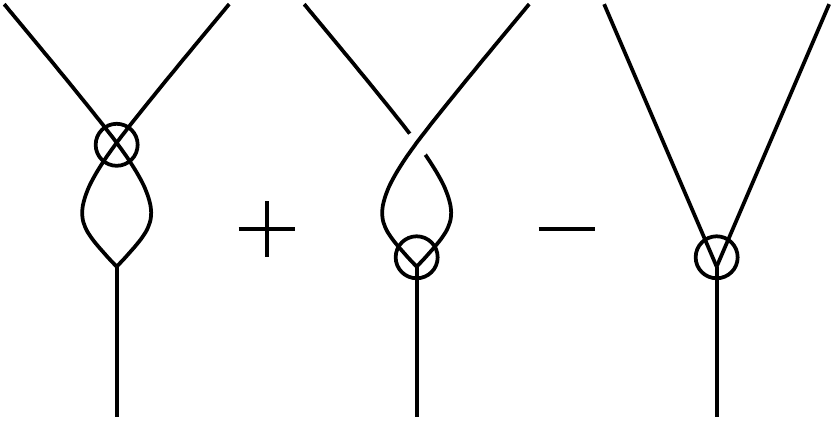}
\end{center}
\caption{}
\label{BCdiff2}
\end{figure}
 
 The map  $\mu  \phi +  R \psi -  \psi $ in $\delta^2_{\rm BC}$   is depicted in Figure~\ref{BCdiff2}. 
The crossing with a circle represents $\phi \in C^2_{\rm YB}(V,V)$, and trivalent vertices with circles 
represent $\psi \in C^2_{\rm H}(V,V)$.

\begin{figure}[htb]
\begin{center}
\includegraphics[width=3in]{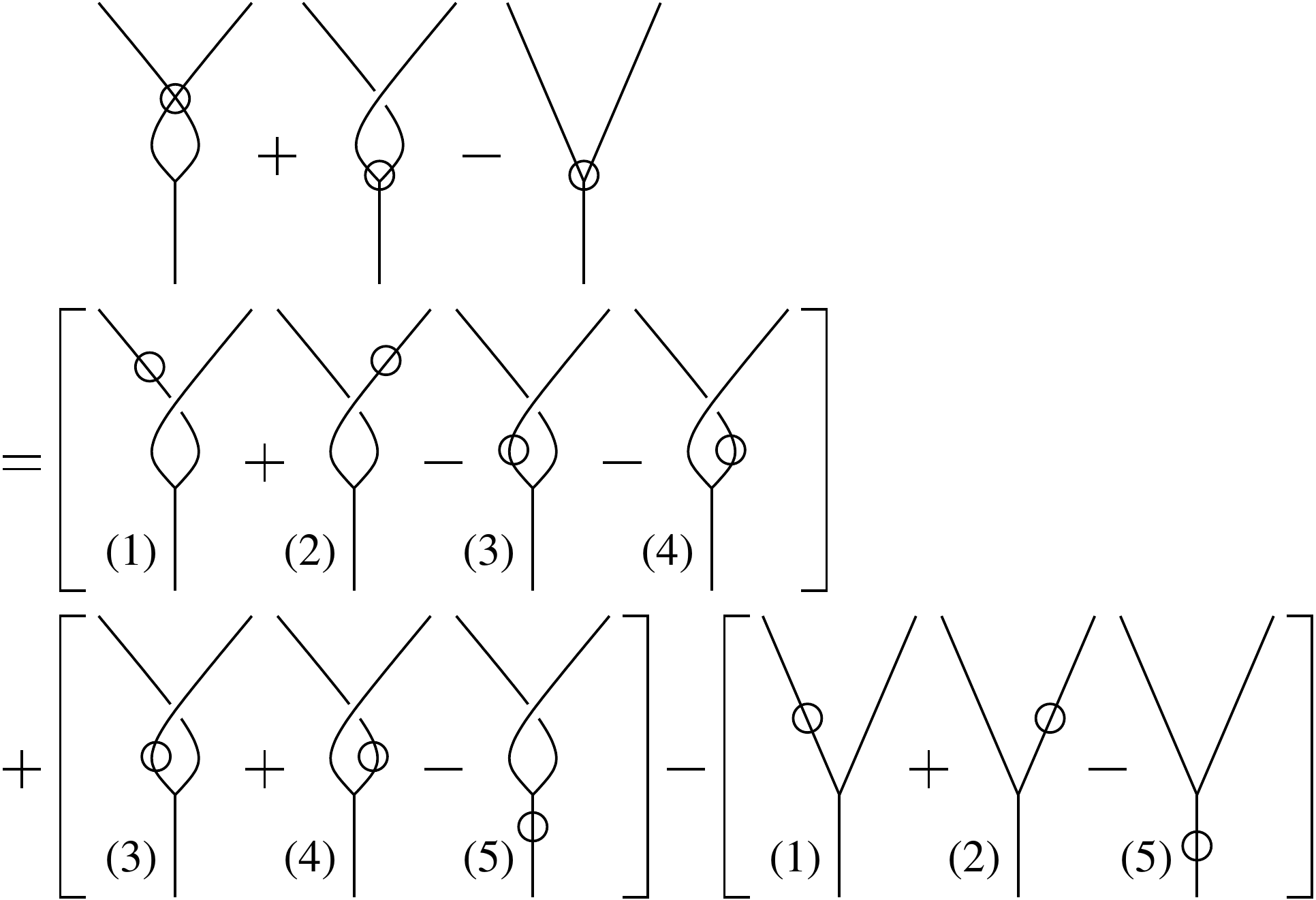}
\end{center}
\caption{}
\label{BCdiff1diff2}
\end{figure}

\begin{lemma}\label{lem:sub}
The following sequence forms a chain complex:
$$
C^1_{\rm BC} (V,V) \stackrel{\delta^1_{\rm BC}}{\xrightarrow{\hspace*{1cm}}} 
C^2_{\rm BC} (V,V) \stackrel{\delta^2_{\rm BC}}{\xrightarrow{\hspace*{1cm}}} 
C^3_{\rm BC} (V,V) .
$$
\end{lemma}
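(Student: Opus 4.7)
The plan is to verify $\delta^2_{\rm BC} \circ \delta^1_{\rm BC}(f) = 0$ for every $f \in C^1_{\rm BC}(V,V)$. The codomain $C^3_{\rm BC}(V,V)$ splits as the direct sum of $C^3_{\rm YBH}(V,V)$ and the new summand $C^{2,1}_{\rm BC}(V,V)$, and $\delta^2_{\rm BC}$ decomposes correspondingly as $\delta^2_{\rm YBH}$ plus the BC component $(\phi \oplus \psi) \mapsto \mu\phi + \psi R - \psi$ landing in $C^{2,1}_{\rm BC}(V,V)$. The latter is precisely the first-order expansion of the braided commutativity constraint $\tilde\mu \tilde R = \tilde\mu$ on the infinitesimal deformation. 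Thus the verification splits into two independent checks, one in each summand.

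For the $C^3_{\rm YBH}$ projection, I would use the equality $\delta^1_{\rm BC} = \delta^1_{\rm YBH}$ built into the definition: the projected composite is then literally $\delta^2_{\rm YBH}(\delta^1_{\rm YBH}(f))$, which vanishes by Proposition~\ref{pro:cochain}. This disposes at once of the four classical YB, YI, IY, and Hochschild contributions.

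For the BC projection, I would set $\phi := \delta^1_{\rm YB}(f)$ and $\psi := \delta^1_{\rm H}(f)$ and expand $\mu\phi + \psi R - \psi$ directly. The expansion of $\mu\phi$ produces four terms, two of which carry the composite $\mu R$ at the top and collapse to $\mu$ via the braided commutativity of $V$. The expansion of $\psi R$ produces three terms, one of which is $-f\mu R$ and collapses similarly to $-f\mu$. Combining with $-\psi = -\mu(f \otimes \mathbb 1) - \mu(\mathbb 1 \otimes f) + f\mu$, every surviving term cancels in pairs, leaving zero.

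The key step, and the only essential algebraic input, is the repeated use of the ambient braided commutativity $\mu R = \mu$: it is precisely this identity that allows the new BC summand to assemble with the pre-existing differentials into a cochain complex. Everything else is routine bookkeeping, and the same cancellation admits a transparent diagrammatic interpretation along the lines of Figure~\ref{BCdiff1diff2}, where the BC relation is encoded by collapsing an $R$-crossing sitting above a multiplication vertex into a plain multiplication vertex.
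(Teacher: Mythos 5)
Your proposal is correct and follows essentially the same route as the paper's proof: the paper likewise substitutes $\phi=\delta^1_{\rm YB}(f)$ and $\psi=\delta^1_{\rm H}(f)$ into the new summand $\mu\phi+\psi R-\psi$ and cancels terms pairwise (diagrammatically, in Figure~\ref{BCdiff1diff2}), with the YBH summand disposed of by Proposition~\ref{pro:cochain} exactly as you say. Your algebraic expansion, including the explicit identification of the three places where $\mu R=\mu$ is invoked, is a faithful symbolic transcription of the paper's diagrammatic cancellation.
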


\begin{proof}
Set $\phi=\delta^1_{\rm YB} (f)$ and $\psi =\delta^1_{\rm H} (f)$. 
Diagrammatically the map $\mu \phi + \psi R - \psi$  is depicted in Figure~\ref{BCdiff2} in the top line with circle vertices representing $\phi$ and $\psi$.
The substitution  $\phi=\delta^1_{\rm YB} (f)$ and $\psi =\delta^1_{\rm H} (f)$ are depicted in the bottom two lines,
with circles on edges representing $f$. The cancelling terms are indicated by numbers in the figure, showing that 
the equation is satisfied with the substitution.
This proves $\delta^2_{\rm BC} \delta^1_{\rm BC}=0$.
\end{proof}

\begin{definition}
{\rm
The second braided commutative cohomology group $H^2_{\rm BC}(V,V)$ is defined 
as usual by ${\rm Ker}(\delta^2_{\rm BC}) / {\rm Im} ( \delta^1_{\rm BC} )$, which is well posed following Lemma~\ref{lem:sub}.
}
\end{definition}

\begin{lemma}\label{lem:braid_comm_deform}
Let $(V,\mu, R)$ be a braided algebra. 
Let $(\phi, \psi)$ be a  Yang-Baxter Hochschild $2$-cochain. Then, setting $\tilde \mu = \mu + \hbar \psi$, $\tilde R = R + \hbar \phi $, the pair $(\tilde \mu, \tilde R)$ defines a braided commutative infinitesimal deformation if and only if 
$(\phi, \psi) \in Z_{\rm BC}^2(V,V)$, i.e. it is a braided $2$-cocycle. 
\end{lemma}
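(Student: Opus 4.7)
The plan is to decouple the statement into two independent conditions using the direct-sum decomposition $\delta^2_{\rm BC} = \delta^2_{\rm YBH} \oplus (\mu\phi + \psi R - \psi)$. Because the image of the new summand lies in $C^{2,1}_{\rm BC}(V,V)$, which is a direct summand disjoint from the codomain of $\delta^2_{\rm YBH}$, the cocycle equation $\delta^2_{\rm BC}(\phi\oplus\psi) = 0$ splits cleanly as $\delta^2_{\rm YBH}(\phi\oplus\psi) = 0$ together with the additional relation $\mu\phi + \psi R = \psi$.

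First, I would invoke Theorem~\ref{thm:classification} to identify the first condition with the statement that $(\tilde V, \tilde \mu, \tilde R)$ is a braided algebra infinitesimal deformation of $(V,\mu,R)$. This disposes of the associativity of $\tilde \mu$, the Yang-Baxter equation for $\tilde R$, and both the YI and IY compatibilities between them, so that what remains to check is precisely braided commutativity of the deformed structure.

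Next, I would verify by a direct $\hbar$-expansion that the equation $\mu\phi + \psi R - \psi = 0$ is exactly the condition $\tilde \mu \tilde R = \tilde \mu$ on $\tilde V$. Indeed, $\tilde \mu \circ \tilde R = (\mu + \hbar\psi)\circ(R + \hbar\phi) \equiv \mu R + \hbar(\mu\phi + \psi R) \pmod{\hbar^2}$; comparing with $\tilde \mu = \mu + \hbar\psi$ and using the standing hypothesis $\mu R = \mu$ (since $(V,\mu,R)$ is itself braided commutative) gives $\tilde \mu \tilde R - \tilde \mu \equiv \hbar(\mu\phi + \psi R - \psi) \pmod{\hbar^2}$, which vanishes if and only if the BC summand of $\delta^2_{\rm BC}(\phi\oplus\psi)$ vanishes. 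The converse direction is this same calculation read in reverse: assuming $(\phi,\psi)\in Z^2_{\rm BC}(V,V)$, the YBH part of the cocycle condition produces a braided algebra deformation via Theorem~\ref{thm:classification}, and the BC part enforces $\tilde \mu \tilde R = \tilde \mu$.

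The main (and essentially only) obstacle is bookkeeping: one must verify that the $\hbar$-expansion of $\tilde \mu \tilde R$ drops no relevant term modulo $\hbar^2$ and that the direct-sum decomposition of $C^3_{\rm BC}(V,V)$ genuinely separates the YBH part from the new BC part so that the two cocycle conditions can be read off independently. Both of these are immediate from the definitions, so no new ingredient beyond routine first-order deformation calculus is required.
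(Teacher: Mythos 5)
Your proposal is correct and follows essentially the same route as the paper: split $\delta^2_{\rm BC}$ into its YBH summand (which, by the earlier deformation result, is equivalent to $(\tilde V,\tilde\mu,\tilde R)$ being a braided algebra deformation) and the new summand landing in $C^{2,1}_{\rm BC}(V,V)$, then identify the latter with the degree-one coefficient of $\tilde\mu\tilde R-\tilde\mu$ by direct expansion, using $\mu R=\mu$ for the degree-zero part. The only cosmetic difference is that the precise fact you need for the first half is the cocycle-level statement (Lemma~3.7 of \cite{SZ-YBH}, which the paper cites) rather than Theorem~\ref{thm:classification}, which is phrased at the level of cohomology classes; and your writing the extra term as $\mu\phi+\psi R-\psi$ is the type-correct form, consistent with the paper's Figure and Lemma~\ref{lem:sub}.
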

\begin{proof}
		The pair $(\tilde \mu, \tilde R)$ is a braided commutative infinitesimal deformation if it defines a braided algebra infinitesmial deformation of the braided algebra $V$, and this deformation is braided commutative. Using Lemma~3.7 in \cite{SZ-YBH} $(\tilde \mu, \tilde R)$ defines a braided algebra deformation if and only if $(\phi, \psi)$ is a YBH $2$-cocycle. Since the differentials for YBH and BC differ only by a direct summand, it means that to prove the statement we only need to show that a braided algebra infinitesimal deformation defined by a YBH $2$-cocycle $(\phi, \psi)$ is a braided commutative if and only if $\mu\phi + R\psi - \psi = 0$. 
		
		The braided algebra infinitesimal deformation defined by $(\phi, \psi)$ is braided commutative, by definition, if and only if
		$$
				\hat \mu\hat R = \hat\mu.
		$$
		Taking the component of degree $1$ in $\hbar$ of the previous equation we find that this is equivalent to
		$$
				\psi R + \mu \phi = \psi.
		$$
		Since terms of higher order in $\hbar$ vanish for infinitesimal deformations, and the degree $0$ part of the braided commutativity equation is automatically satisfied, this completes the proof. 
\end{proof}

\begin{theorem}
Let $(V, \mu, R)$ be a braided commutative algebra. 
There is a bijection between the equivalence classes $D(V, \mu, R)$
of infinitesimal  braided commutative deformations and the second braided commutativity cohomology, $H^2_{\rm BC}(V,V)$.
\end{theorem}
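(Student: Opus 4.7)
The plan is to adapt the proof of Theorem~\ref{thm:classification} to the braided commutative setting, using Lemma~\ref{lem:braid_comm_deform} as the bridge between $2$-cocycles and BC deformations. The key observation that makes the adaptation clean is that $\delta^1_{\rm BC}=\delta^1_{\rm YBH}$, so the coboundaries at the $C^1 \to C^2$ level are exactly those of the ambient YBH theory; only the cocycle condition at level $2$ has been strengthened by the extra summand $\mu\phi + R\psi - \psi$.

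First I would define a map $\Phi : Z^2_{\rm BC}(V,V) \to D(V,\mu,R)$ by sending $(\phi,\psi)$ to the equivalence class of $(\tilde V, \mu+\hbar\psi, R+\hbar\phi)$. By Lemma~\ref{lem:braid_comm_deform}, this is a genuine BC infinitesimal deformation, so $\Phi$ is well defined on cocycles. Surjectivity of the induced map on cohomology is then immediate: any BC deformation is of the form $\tilde\mu=\mu+\hbar\psi$, $\tilde R=R+\hbar\phi$, and Lemma~\ref{lem:braid_comm_deform} forces $(\phi,\psi)\in Z^2_{\rm BC}(V,V)$.

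To see that $\Phi$ descends to $H^2_{\rm BC}(V,V)$, suppose $(\phi_1,\psi_1)-(\phi_2,\psi_2)=\delta^1_{\rm BC}(f)$ for some $f\in C^1_{\rm BC}(V,V)$. I would set $\tilde f := \mathbb 1_V + \hbar f$ and verify modulo $\hbar^2$ that $\tilde f$ intertwines the two multiplications (using $\psi_1-\psi_2=\delta^1_{\rm H}(f)$) and the two braidings (using $\phi_1-\phi_2=\delta^1_{\rm YB}(f)$). Since $\tilde f|_V = \mathbb 1_V$, it is an isomorphism of infinitesimal deformations in the sense of Definition~\ref{def:inf_deform}, and because both the source and target are already BC deformations, $\tilde f$ is automatically a BC homomorphism. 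Conversely, given a BC isomorphism $\tilde f$ of the two deformations restricting to $\mathbb 1_V$ on $V$, one writes $\tilde f = \mathbb 1_V + \hbar f$ and reads off the order-$\hbar$ components of the algebra and braiding compatibility conditions for $\tilde f$; these yield precisely $\psi_1-\psi_2=\delta^1_{\rm H}(f)$ and $\phi_1-\phi_2=\delta^1_{\rm YB}(f)$, hence $(\phi_1\oplus\psi_1)-(\phi_2\oplus\psi_2)=\delta^1_{\rm BC}(f)$. Combining the surjectivity above with this injectivity gives the desired bijection.

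The main obstacle I anticipate is making sure no interaction between the new BC summand of $\delta^2_{\rm BC}$ and the equivalence relation is overlooked. Concretely, one must check that the BC constraint $\mu\phi+R\psi-\psi=0$ is preserved along the $\tilde f$-equivalence, i.e.\ that if $(\phi_1,\psi_1)$ satisfies it and $(\phi_2,\psi_2)=(\phi_1,\psi_1)-\delta^1_{\rm BC}(f)$, then $(\phi_2,\psi_2)$ also satisfies it. This is automatic: both cocycles produce BC deformations of the already braided commutative $(V,\mu,R)$, so the constraint holds on each side independently. With this consistency check in place, the bulk of the argument reduces to bookkeeping on top of Theorem~\ref{thm:classification} and Lemma~\ref{lem:braid_comm_deform}.
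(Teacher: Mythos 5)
Your proposal is correct and follows essentially the same route as the paper: both arguments hinge on Lemma~\ref{lem:braid_comm_deform}, the observation that $\delta^1_{\rm BC}=\delta^1_{\rm YBH}$ (so the equivalence relation on deformations is inherited unchanged from the YBH classification), and the compatibility of the extra BC constraint with coboundaries. The only difference is presentational: you unwind the correspondence between coboundaries and isomorphisms $\tilde f=\mathbb 1_V+\hbar f$ explicitly, whereas the paper cites Theorem~3.8 of \cite{SZ-YBH} for that step.
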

\begin{proof}
Using Lemma~\ref{lem:braid_comm_deform}, and noting that the abelian structure defined on $D(V, \mu, R)$ is compatible with the correspondence of the lemma,
there is a homomorphism from $Z^2_{\rm BC}(V,V)$ to the abelian group of deformations.
We show that this descends to an isomomorphism $H^2_{\rm BC} (V,V) \rightarrow D(V, \mu, R)$.
Specifically, we show the following.

		We first show that BC $2$-cocycles define BC deformations, and that a cobounded $2$-cocycle defines a trivial braided commutative infinitesimal deformation. Then, we prove the converse of this statement, showing that BC deformations arise through BC $2$-cocycles, and trivial braided commutative infinitesimal deformations are cobounded. 
		
		From Lemma~\ref{lem:braid_comm_deform}, given a $2$-cocycle $(\phi, \psi)$ we obtain a BC deformation by setting $\hat \mu = \mu + \hbar \psi$ and $\hat R = R + \hbar \phi$. This correspondence defines a map from $2$-cocycles to infinitesimal braided commutative deformations. This is independent of the choice of representative in the class of $H^2_{\rm BC}(V,V)$. In fact, suppose that $(\phi_1, \psi_1)$ and $(\phi_2, \psi_2)$ are in the same equivalence class. Since $\delta^1_{\rm BC} = \delta^1_{\rm YBH}$, it follows that $(\phi_1, \psi_2) =  (\phi_2, \psi_2) + \delta^1_{\rm YBH}(f)$ for some $f$. Applying Theorem~3.8  in \cite{SZ-YBH},
		which states that
		 the Yang-Baxter Hochschild second cohomology group classifies the infinitesimal deformations of 
		 $(V, \mu, R)$,
		 it follows that $\hat \mu_i = \mu + \hbar \psi_i$ and $\hat R_i = R + \hbar \phi_i$, for $i=1,2$, define equivalent braided algebra deformations. Since they are both braided commutative, they are equivalent BC infinitesimal deformations. It follows that to show well-definedness of the correspondence, we can limit ourselves to showing that a cobounded $2$-cocycle defines a trivial braided commutative infinitesimal deformation, as stated above. 
		
		Suppose now that we have a $2$-cocycle $(\phi,\psi)$ such that $(\phi,\psi) = \delta^1_{\rm BC}(f)$, for some BC $1$-cochain $f$. The corresponding braided algebra infinitesimal deformation is trivial (as a braided algebra), by virtue of Theorem~3.8 in \cite{SZ-YBH}, considering the fact that $\delta^2_{\rm BC}$ contains a direct summand that coincides with $\delta^2_{\rm YBH}$. Therefore, there exists a braided algebra isomorphism $\tilde f$ between the deformation induced by $(\phi,\psi)$ and the undeformed algebra $V$, which reduces to the identity on $V$. This means that the braided commutative infinitesimal deformation is trivial.
		
		Before showing the converse, consider two  equivalent infinitesimal deformations $\hat \mu_i = \mu + \hbar \psi_i$ and $\hat R_i = R + \hbar \phi_i$, for $i=1,2$, where $\hat \mu_2$ and $\hat R_2$ define a braided commutative structure. Then, applying Theorem~3.8  in \cite{SZ-YBH}, it follows that $\hat \mu_1 = \hat \mu_2 + \hbar \delta^1_H(f)$ and $\hat R_1 = \hat R_2 + \hbar \delta^1_{\rm YB}(f)$ for some $f: V\rightarrow V$. Lemma~\ref{lem:sub} implies that $(\delta^1_{\rm YB}(f), \delta^1_{H}(f))$ is braided commutative, and therefore the deformation $(\hat \mu_1, \hat R_1)$ is   a sum of  
		braided commutative deformations, and therefore braided commutative. This means that in the proof of the converse of the previous part, we do not need to consider the case where we have an infinitesimal deformation that is equivalent to a braided commutative deformation, but we can directly consider only the case where we have a braided commutative deformation, as stated at the beginning of the proof. 
		
		 For the converse of the statement, suppose that $\hat V$ is a braided commutative infinitesimal deformation of $V$. From Theorem~3.8 in \cite{SZ-YBH} we know that $\hat V$  arises from a YBH $2$-cocycle $(\phi,\psi)$, and moreover there exists a YBH $1$-cochain such that $\delta^1_{\rm YBH}(f) = (\phi,\psi) $. Since the coboundary of the BC component of the BC cohomology is given by the direct sum of YB and Hochschild cohomology, it follows that $(\phi,\psi)$ is cobounded as braided commutative $2$-cocycle as well, completing the proof.
\end{proof}

\section{Braided commutativity for Hopf algebras}\label{sec:Hopf}

In this section we construct  braided commutative 2-cocycles in Hopf algebras from Hopf algebra cocycles.
First we review the construction of  a homomorphism between the second cohomology group of Hopf algebras and the second YBH cohomology of their corresponding braided algebras, following expositions of \cite{SZ-YBH}.  
Recall (\cite{ChariPressley}, Chapter 6) that the Hopf algebra second cohomology group 
of a Hopf algebra $H$ with coefficients in $H$ characterizes Hopf algebra deformations. 
Roughly speaking, a $2$-cocycle for a Hopf algebra consists of a pair of cochains $(\xi, \zeta )$ such that $\xi$ deforms the algebra structure, $\zeta$ deforms the coalgebra structure, and such deformations are compatible. 

We adopt the symbols
$(\phi, \psi)$ for YBH 2-cocycles, and  
use
$(\xi, \zeta)$ for Hopf algebra 2-cocycles,
where 
$\xi \in {\rm Hom}(H^{\otimes 2}, H)$ is a deformation 2-cocycle of a Hopf algebra multiplication $\mu$, 
and $ \zeta \in {\rm Hom}(H, H^{\otimes 2})$ is a deformation 2-cocycle of a comultiplication $\Delta$.
Both multiplications for braided algebras and Hopf algebras share the same symbol $\mu$,
as it causes little confusion.

More specifically, Hopf algebra 2-cochain groups with  coefficients in $H$
are defined as 
	$$C^2_{\rm Hf}(H,H) = C^{2,1}_{\rm Hf}(H,H) \oplus C^{2,2}_{\rm Hf}(H,H),$$ 
where 
$C^{2,1}_{\rm Hf}(H,H)={\rm Hom}(H^{\otimes 2}, H)$
and $C^{2,2}_{\rm Hf}(H,H)={\rm Hom}(H,H^{\otimes 2})$
.}

If $\mu$ and $\Delta$ denote the multiplication and comultiplication of the Hopf algebra $H$,
the 2-cocycles $\xi \in Z^{2,1}_{\rm Hf}(H,H)$ and $\zeta \in Z^{2,2}_{\rm Hf}(H,H)$ deforme
$\mu$ and $\Delta$, respectively, so that 
$\tilde \mu=\mu + \hbar \xi$ and $\tilde \Delta = \Delta + \hbar \zeta$ defines 
a Hopf algebra structure on $H \otimes {\mathbb k}[[\hbar]]/(\hbar^2)$. 
This condition requires  that $(\xi, \zeta)$ satisfy the compatibility condition 
needed to guarantee that $\tilde \mu$ and $\tilde \Delta$ are compatible,
i.e., $\tilde \Delta (x  y) = \tilde \Delta (x)\cdot \tilde \Delta (y)$ for simple tensors $x$ and $y$,
where the concatenation represents $\tilde \mu$.

A Hopf $2$-cocycle $(\xi,\zeta)$ is said to be {\it normalized}
if it satisfies the {\it normalization conditions}
\begin{eqnarray*}
		\xi(1\otimes x) &=& \xi(x\otimes 1) = 0,\\
		(\epsilon \otimes \mathbb 1)\zeta(x) &=& (\mathbb 1\otimes \epsilon)\zeta(x) = 0.
\end{eqnarray*} 
The module of normalized 2-cocycles is denoted by $\hat Z^2_{\rm Hf}(H,H)$,
and its second homology group is denoted by 
$\hat H^2_{\rm Hf}(H,H) := \hat Z^2_{\rm Hf}(H,H) / ( B^2_{\rm Hf}(H,H) \cap \hat Z^2_{\rm Hf}(H,H) )$.
These normalization conditions correspond to the unit and counit conditions for the deformed bialgebra. 
It is known (see \cite{ChariPressley}, Chapter 6) that any $2$-cocycle is cohomologous to a $2$-cocycle that satisfies 
 the normalization  conditions. 
 Thus given a deformation of a bialgebra, there exists an equivalent deformation where the unit and counit are not deformed. 
 
 In addition, any bialgebra deformation can  be extended to a Hopf algebra deformation, in the sense that there is 
 a deformation $\hat S=S + \hbar S'$ of  the antipode $S$  in such a way that the conditions $\hat \mu(\mathbb 1\otimes \hat S)\hat \Delta = \hat \mu(\hat S\otimes \mathbb 1)\hat \Delta = \eta \epsilon$ are satisfied. 
 The deformed antipode $\hat S$ is unique, because antipodes of Hopf algebras are unique.

For ease of notation, we will denote $\zeta(x)$ in Sweedler notation as for $\Delta$, the only difference being that we will use lower scripts for $\zeta$, while upper scripts for $\Delta$. Therefore, we will have $\zeta(x) = x_{(1)}\otimes x_{(2)}$. Since $(\Delta\otimes \mathbb 1)\zeta$ and $(\zeta\otimes \mathbb 1)\Delta$ are in general not the same, we introduce further a notation to distinguish the consecutive application of $\Delta$ and $\zeta$. We set 
$$(\mathbb 1 \otimes \Delta )\zeta(x) = ( \mathbb 1 \otimes \Delta)(x_{(1)}\otimes x_{(2)}) = (x_{(1)})^{(1)}\otimes (x_{(1)})^{(2)} \otimes x_{(2)} =: \prescript{}{(1)}x\otimes \prescript{}{(2)}x \otimes \prescript{}{(3)}x, $$
 and similarly we set 
 $(\mathbb 1 \otimes \zeta )\Delta(x) =: \prescript{(1)}{}x\otimes \prescript{(2)}{}x\otimes \prescript{(3)}{}x$.
This notation allows us to forget the brackets that determine whether the internal index is lower or upper -- i.e. whether we have applied $\Delta$ or $\zeta$ first.


In \cite{SZ-YBH}, a map from Hopf algebra chain complex to the Yang-Baxter Hochschild chain complex was defined as follows.
Let $H$ denote a Hopf algebra and let $R_H$ be the YB operator defined through 
the adjoint map 
as in Section~\ref{subsec:adR}. Let $(\xi, \zeta)$
denote a Hopf algebra $2$-cocycle as described above.
Then, a cochain 
$\Psi_{\zeta} (\xi) : H\otimes H \rightarrow H\otimes H$ was defined as 
\begin{eqnarray*}
	\Psi_\zeta(\xi)(x\otimes y) &=& y^{(1)}\otimes S(y^{(2)})\xi(x\otimes y^{(3)}) + y^{(1)}\otimes \xi(S(y^{(2)})\otimes xy^{(3)})\\
	&&+\prescript{}{(1)}{y} \otimes S(\prescript{}{(2)}{y})x\prescript{}{(3)}{y} + \prescript{(1)}{}{y} \otimes S(\prescript{(2)}{}{y})x\prescript{(3)}{}{y} + y^{(1)} \otimes S'(y^{(2)})xy^{(3)}. 
\end{eqnarray*}
Here we note that $S'$ is the degree 1 term in $\tilde S=S + \hbar S'$, which is uniquely determined by $(\xi, \zeta)$, so that $\Psi$ does not depend on $S'$.

It was shown in \cite{SZ-YBH} that the map $(\xi, \zeta) \mapsto (\Psi_\zeta(\xi), \xi)$
defines a map 
$\Psi: \hat Z^2_{\rm Hf}(H,H) \rightarrow Z^2_{\rm YBH}(H,H)$.

\begin{lemma}\label{lem:2cocy_Hopf}
Let the map $\Psi: \hat Z^2_{\rm Hf}(H,H) \rightarrow Z^2_{\rm YBH}(H,H)$ be defined as above,
and let $(\phi, \psi)=(\Psi_\zeta (\xi), \xi)$ be the image of $(\xi, \zeta)$.
Then $(\phi,  \psi) \in Z^2_{\rm BC}(H,H)$. 
\end{lemma}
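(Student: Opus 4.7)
The plan is to recognize $(\Psi_\zeta(\xi), \xi)$ as the first-order datum of a Hopf algebra deformation $\tilde H$ of $H$, for which the adjoint R-matrix automatically produces a braided commutative braided algebra, so that membership in $Z^2_{\rm BC}(H, H)$ follows from Lemma~\ref{lem:braid_comm_deform} without grinding through the five-term formula for $\Psi_\zeta(\xi)$ by hand.

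First I would observe that $(\Psi_\zeta(\xi), \xi) \in Z^2_{\rm YBH}(H, H)$ is already the content of the result of \cite{SZ-YBH} recalled just before the lemma, so the $\delta^2_{\rm YBH}$ direct summand of $\delta^2_{\rm BC}(\Psi_\zeta(\xi) \oplus \xi)$ vanishes automatically. It therefore suffices to prove vanishing of the remaining BC-specific summand $\mu\, \Psi_\zeta(\xi) + \xi\, R_H - \xi \in C^{2,1}_{\rm BC}(H, H)$. Writing $\tilde\mu := \mu + \hbar \xi$ and $\tilde R := R_H + \hbar\, \Psi_\zeta(\xi)$ on $H_\hbar := H \otimes_{{\mathbb k}} {\mathbb k}[[\hbar]]/(\hbar^2)$, this vanishing is precisely the order-$\hbar$ part of the braided commutativity relation $\tilde\mu\tilde R = \tilde\mu$.

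Next I would build the infinitesimal Hopf algebra deformation $\tilde H = (H_\hbar, \tilde\mu, \tilde\Delta, \eta, \epsilon, \tilde S)$ with $\tilde\Delta := \Delta + \hbar\zeta$ and $\tilde S := S + \hbar S'$, using the uniquely determined first-order antipode correction $S'$ recalled in Section~\ref{sec:Hopf}; the normalization of $(\xi, \zeta)$ is what permits leaving $\eta$ and $\epsilon$ undeformed. Applying Lemma~\ref{lem:BA} to $\tilde H$, the adjoint R-matrix
$$R_{\tilde H}(x \otimes y) = \tilde y^{(1)} \otimes \tilde S(\tilde y^{(2)})\, \tilde\cdot\, x\, \tilde\cdot\, \tilde y^{(3)}$$
endows $(\tilde H, \tilde\mu, R_{\tilde H})$ with a braided algebra structure, and the braided commutativity of adjoint-R braided algebras (the lemma immediately following Lemma~\ref{lem:BA}), applied to $\tilde H$, then gives $\tilde\mu\, R_{\tilde H} = \tilde\mu$ as desired. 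A Leibniz-rule expansion of $R_{\tilde H}$ in powers of $\hbar$ modulo $\hbar^2$ yields five contributions, one each from deforming the two comultiplications, the two multiplications and the antipode in the adjoint formula, which assemble into precisely the five summands of $\Psi_\zeta(\xi)(x \otimes y)$ recalled before the lemma statement, so $R_{\tilde H} \equiv R_H + \hbar\, \Psi_\zeta(\xi) \pmod{\hbar^2}$ and the previous paragraph's identity holds. Lemma~\ref{lem:braid_comm_deform} then returns $(\Psi_\zeta(\xi), \xi) \in Z^2_{\rm BC}(H, H)$.

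The main obstacle is the bookkeeping in the first-order expansion of $R_{\tilde H}$: the two $\zeta$-terms of $\Psi_\zeta(\xi)$ correspond to deforming the first versus the second application of $\Delta$ in the iterated coproduct $\Delta^2$ (which the paper distinguishes by the $\prescript{}{(i)}{y}$ versus $\prescript{(i)}{}{y}$ Sweedler conventions), and one must verify that the $S'$-term emerges with the correct placement so that $\Psi$ depends only on $(\xi, \zeta)$ and not on the particular extension $S'$ of the antipode to the deformation.
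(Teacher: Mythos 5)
Your proof is correct, but it takes a genuinely different route from the paper's. The paper establishes the vanishing of the braided commutative summand $\mu\,\Psi_\zeta(\xi)+\xi R_H-\xi$ by a direct diagrammatic computation (Figure~\ref{BCHopf}): it substitutes the five terms of $\Psi_\zeta(\xi)$ and cancels everything pairwise using the Hopf $2$-cocycle conditions, the identity $x^{(1)}S(x^{(2)})=\eta\epsilon$, the normalization of $(\xi,\zeta)$, and the deformed antipode condition $\hat\mu(\mathbb 1\otimes\hat S)\hat\Delta=\eta\epsilon$. You instead interpret $(\xi,\zeta,S')$ as the first-order data of an honest Hopf algebra $\tilde H$ over $\mathbb k[[\hbar]]/(\hbar^2)$, apply Lemma~\ref{lem:BA} and the braided commutativity of adjoint R-matrices to $\tilde H$ itself, and read off the cocycle identity as the order-$\hbar$ coefficient of $\tilde\mu\, R_{\tilde H}=\tilde\mu$, after checking that $R_{\tilde H}\equiv R_H+\hbar\,\Psi_\zeta(\xi)$ modulo $\hbar^2$ via a Leibniz expansion of the adjoint formula. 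The two routes rest on the same ingredients --- both ultimately invoke the deformed antipode condition and the bialgebra compatibility of $(\xi,\zeta)$ --- but yours packages them conceptually: it explains why the cancellation must occur and extends verbatim to higher-order deformations, at the price of the bookkeeping needed to match the five first-order contributions of $R_{\tilde H}$ with the five summands of $\Psi_\zeta(\xi)$. That verification, which you correctly flag, is the only place requiring care: the two $\zeta$-terms come from deforming the two factors of the iterated coproduct, and coassociativity of $\tilde\Delta$ (part of the Hopf $2$-cocycle condition) is what makes the first-order term independent of the chosen bracketing, while uniqueness of the deformed antipode guarantees that the $S'$-term is determined by $(\xi,\zeta)$. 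The paper's computation, by contrast, is self-contained at the level of the explicit formula and does not require recognizing $\Psi_\zeta(\xi)$ as the linearization of $R_{\tilde H}$.
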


\begin{proof}
As a consequence of Theorem~5.1 in \cite{SZ-YBH}, we only need to show the result for the braided commutative component of the differential.

Diagrammatic computations are shown in Figure~\ref{BCHopf}.
The first line depicts the first differential $\delta^1_{\rm Hf}  (\phi, \psi)$.
The second line is obtained by substitution $(\phi, \psi) = (\Psi_{\zeta}  (\xi), \xi) $.
In particular, the terms (1) through (5) represent the five terms in the definition of $\Psi_\zeta (\xi)$. 
In (5), the shaded circle represents $S'$. 

The terms $(3)$ and $(7)$ cancel. The 2-cocycle conditions are applied to 
the pairs $(1)$, $(2)$, and $(4)$, $(6)$, respectively, to obtain the pairs 
$(8)$, $(9)$, and $(10)$, $(11)$.
After applying the identity  $x^{(1)} S(x^{(2)})=\eta \epsilon$, the terms $(8)$ and $(11)$ become 
$(12)$ and $(15)$, respectively, and they vanish due to normality of $\xi$ and $\zeta$. 
The terms $(9)$ and $(10)$ are equal to $(13)$ and $(14)$, respectively, by 
associativity and coassociativity, and together with (5) they  vanish, since they follow from the condition $\hat \mu(\mathbb 1\otimes \hat S)\hat \Delta  = \eta \epsilon$.
\end{proof}

\begin{figure}[htb]
\begin{center}
\includegraphics[width=4in]{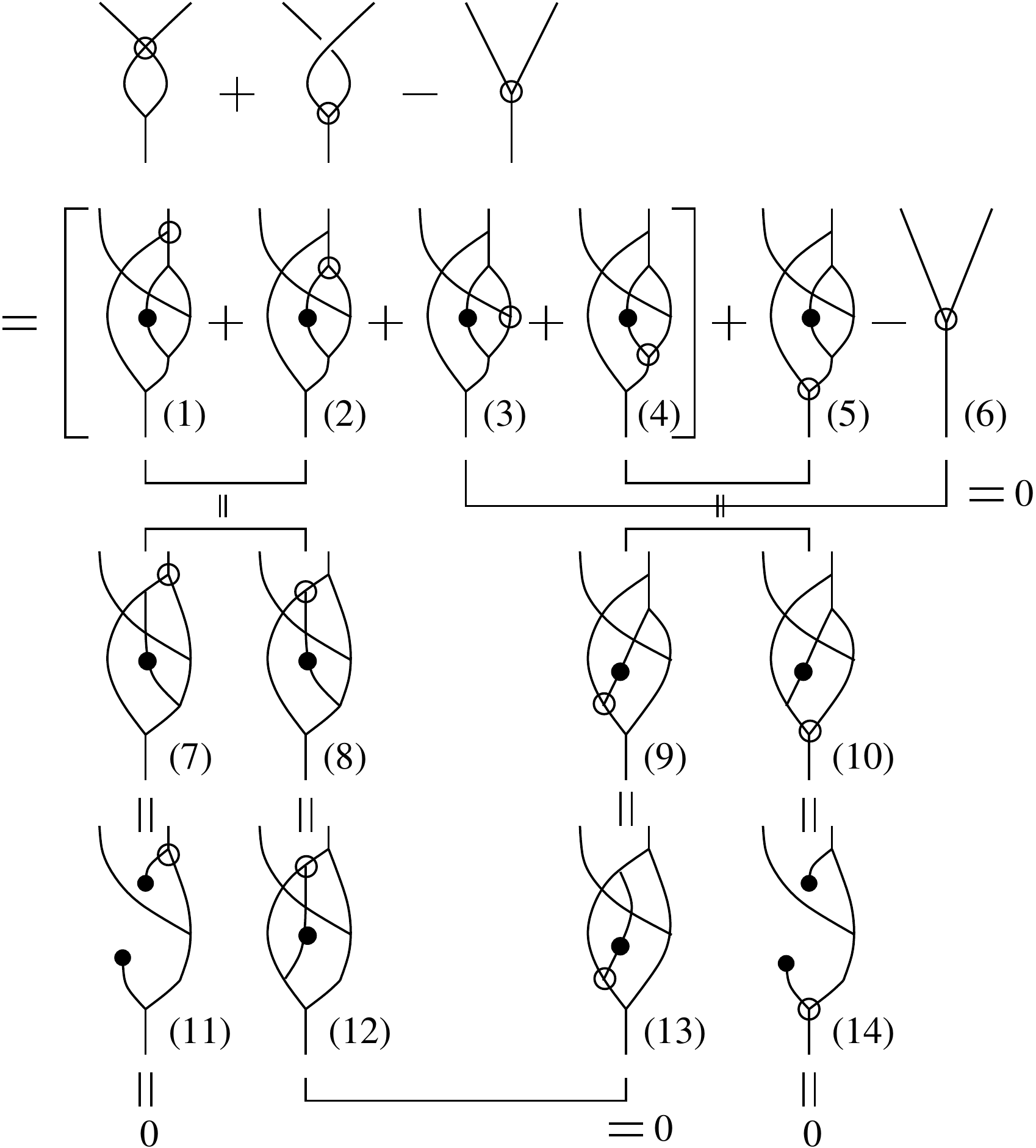}
\end{center}
\caption{}
\label{BCHopf}
\end{figure}

\begin{theorem}\label{thm:Hopf}
		 Let $H$ be a Hopf algebra. Then, the map $\Psi$ induces a well defined homomorphism of second cohomology groups $ \Psi: \hat H^2_{\rm Hf}(H,H) \rightarrow H^2_{\rm BC}(H,H)$, such that $[\Psi(\xi,\zeta)] \neq 0$ whenever $[\xi] \neq 0$ in $H^2_{\rm H}(H,H)$. 
		    
\end{theorem}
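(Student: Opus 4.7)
The plan is to split the statement into (i) well-definedness of $\Psi$ at the level of second cohomology, and (ii) the non-vanishing claim for classes with $[\xi] \neq 0$.

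For (i), observe first that $C^2_{\rm BC}(H,H) = C^2_{\rm YB}(H,H) \oplus C^2_{\rm H}(H,H)$ coincides as a module with $C^2_{\rm YBH}(H,H)$, while $C^1_{\rm BC}(H,H) = C^1_{\rm YBH}(H,H) = {\rm Hom}(H,H)$ and $\delta^1_{\rm BC} = \delta^1_{\rm YBH}$ by definition. Consequently $B^2_{\rm BC}(H,H) = B^2_{\rm YBH}(H,H)$ as subsets of the common cochain module, while $Z^2_{\rm BC}(H,H) \subseteq Z^2_{\rm YBH}(H,H)$ because the BC cocycle condition adds the extra constraint $\mu\phi + R\psi - \psi = 0$. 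Lemma~\ref{lem:2cocy_Hopf} shows $\Psi$ already factors through $Z^2_{\rm BC}(H,H)$, and Theorem~5.1 of \cite{SZ-YBH} shows that $\Psi$ descends to a homomorphism $\hat H^2_{\rm Hf}(H,H) \to H^2_{\rm YBH}(H,H)$, meaning that normalized Hopf coboundaries are sent to YBH (hence BC) coboundaries. Combining these facts yields a well-defined map $\Psi : \hat H^2_{\rm Hf}(H,H) \to H^2_{\rm BC}(H,H)$.

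For (ii), I would argue by contrapositive. Suppose $[\Psi(\xi,\zeta)] = 0$ in $H^2_{\rm BC}(H,H)$, so that there exists $f \in C^1_{\rm BC}(H,H) = {\rm Hom}(H,H)$ with
$$ (\Psi_\zeta(\xi), \xi) \;=\; \delta^1_{\rm BC}(f) \;=\; \delta^1_{\rm YB}(f) \,\oplus\, \delta^1_{\rm H}(f). $$
Projecting onto the Hochschild direct summand $C^2_{\rm H}(H,H) = {\rm Hom}(H^{\otimes 2}, H)$ yields $\xi = \delta^1_{\rm H}(f)$, so $[\xi] = 0$ in $H^2_{\rm H}(H,H)$. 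This gives exactly the contrapositive of the desired implication.

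The genuine technical content has already been absorbed into Lemma~\ref{lem:2cocy_Hopf} (for the cocycle condition on the braided commutative summand) and Theorem~5.1 of \cite{SZ-YBH} (for the YBH part). The only step in the present proof requiring real care is the identification $B^2_{\rm BC}(H,H) = B^2_{\rm YBH}(H,H)$, which reduces to matching the definitions $\delta^1_{\rm BC} = \delta^1_{\rm YBH}$ on the common module of $1$-cochains. The injectivity statement on Hochschild classes then follows immediately from the direct-sum structure of $C^2_{\rm BC}(H,H)$ and $\delta^1_{\rm BC}$.
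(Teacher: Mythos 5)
Your proposal is correct and follows essentially the same route as the paper: well-definedness is obtained from Lemma~\ref{lem:2cocy_Hopf} together with the descent result of Theorem~5.1 in \cite{SZ-YBH} and the observation that $\delta^1_{\rm BC}=\delta^1_{\rm YBH}$, and the non-vanishing claim is proved by contrapositive, noting that a BC coboundary is in particular a YBH coboundary whose Hochschild component exhibits $\xi$ as a Hochschild coboundary. Your version merely makes the final projection onto the $C^2_{\rm H}(H,H)$ summand explicit where the paper cites \cite{SZ-YBH}, which is a harmless (indeed slightly more self-contained) presentation of the same argument.
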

\begin{proof}
		Applying Lemma~\ref{lem:2cocy_Hopf}, we only need to prove that $\Psi$ descends to quotients. 	 
		 In \cite{SZ-YBH} it was proved that 
		 $\Psi$ induces a homomorphism $ \Psi: \hat H^2_{\rm Hf}(H,H) \rightarrow H^2_{\rm YBH}(H,H)$. 
		 Lemma~\ref{lem:2cocy_Hopf} implies that the image of $\Psi$ is in $H^2_{\rm BC}(H,H)$, and since the first differentials of BC and YBH cohomologies are the same, the map is well defined also in $H^2_{\rm BC}(H,H)$.
		 
				 	For the second statement, we need to show that if $(\Psi_\zeta (\xi), \xi)$  is cobounded in BC cohomology, then $\xi$ is cobounded in Hochschild cohomology, and therefore $(\xi, \zeta)$ is a deformation which is trivial as an algebra deformation. However, this is a direct consequence of the definitions. In fact, if $(\Psi_\zeta (\xi), \xi)$  is cobounded in BC cohomology, then in particular it is cobounded in YBH cohomology, which in turn implies that $\xi$ is cobounded in Hochschild cohomology by the results in \cite{SZ-YBH}, against the initial assumption of nontriviality. This completes the proof.  
		\end{proof}

\begin{example}
	{\rm 
			Theorem~\ref{thm:Hopf}  allows us to find examples of Hopf algebras whose BC second cohomology groups are nontrivial. For instance, there are examples of group algebras over fields of nonzero characteristic whose second Hopf cohomology group is nontrivial, therefore inducing a nontrivial second BC cohomology group via  $\Psi$, cf. Example~5.6 in \cite{SZ-YBH}. A similar reasoning can be applied to the Hopf algebra of regular functions over an algebraic group, cf. Example~5.7 in \cite{SZ-YBH}. 
	}
\end{example}

\section{Deformation $3$-cocycles for braided commutativity}\label{sec:3_cocy_BC}

In \cite{SZ-YBH}, it was proved for a braided algebra $V$ that 
if $H_{\rm YBH}^3(V,V) = 0$, then any infinitesimal deformation can be extended to a quadratic deformation.
It is our goal of this section to prove, in addition, that if $V$ is braided commutative and
$H_{\rm BC}^3(V,V) = 0$, then any degree $n$ braided commutative deformation can be extended to a degree $n+1$ braided commutative deformation. 
Toward this goal, we first show that the obstruction to extending braided commutative infinitesimal deformation to 
a braided commutative quadratic deformation lies in the third BC cohomology. We then generalize this result to arbitrary degrees. 

\begin{definition}\label{def:d3}
{\rm
The forth cochain group is defined by 
$
C^4_{\rm BC} (X,X)= C^4_{\rm YBH}\oplus{\rm Hom} (X^{\otimes 3}, X^{\otimes 2}) 
$.
The third differential 
$\delta^3_{\rm BC:YI} : C^3_{\rm BC}(X,X) \rightarrow C^4_{\rm BC}(X,X)$ is defined 
for $\alpha \in C^{3,2}_{\rm YI}(X,X)$, $\beta \in C^3_{\rm YB}(X,X)$, 
and $\alpha' \in {C^{3,2}_{\rm IY}(X,X) }$, $\gamma\in C^3_{\rm H}(X,X)$, and $\tau\in C^{2,1}_{\rm BC}(X,X)$
by 
 \begin{eqnarray*}
\delta^3_{\rm BC:YI} ( \alpha \oplus \beta \oplus \gamma ) 
 &=& 
[  
\alpha  (R \otimes \mathbb 1 ) +
 R ( \gamma \otimes \mathbb 1 ) ]  
- 
[({\mathbb 1} \otimes \mu ) \beta +
 ({\mathbb 1} \otimes  \gamma ) (R \otimes {\mathbb 1} )({\mathbb 1} \otimes  R )+ 
 \alpha ], \\ 
 \delta^3_{\rm BC:IY} ( \alpha \oplus \beta \oplus \gamma ) 
 &=&  [(\mu\otimes \mathbb 1)\beta + \alpha(\mathbb 1\otimes R) + R(\mathbb 1\otimes \gamma)] - [(\gamma\otimes\mathbb 1)(\mathbb 1\otimes R)(R\otimes \mathbb 1) + \alpha],\\
 \delta^3_{\rm BC} ( \alpha \oplus \alpha' \oplus \beta \oplus \gamma \oplus \tau) 
 &=& \delta^3_{\rm YBH} (\alpha\oplus \alpha' \oplus \beta \oplus \gamma) \oplus
 \delta^3_{\rm BC:YI} ( \alpha \oplus \beta \oplus \tau ) 
 \oplus
 \delta^3_{\rm BC:IY} ( \alpha' \oplus \beta \oplus \tau ). 
 \end{eqnarray*} 
}
\end{definition}

\begin{figure}[htb]
\begin{center}
\includegraphics[width=3in]{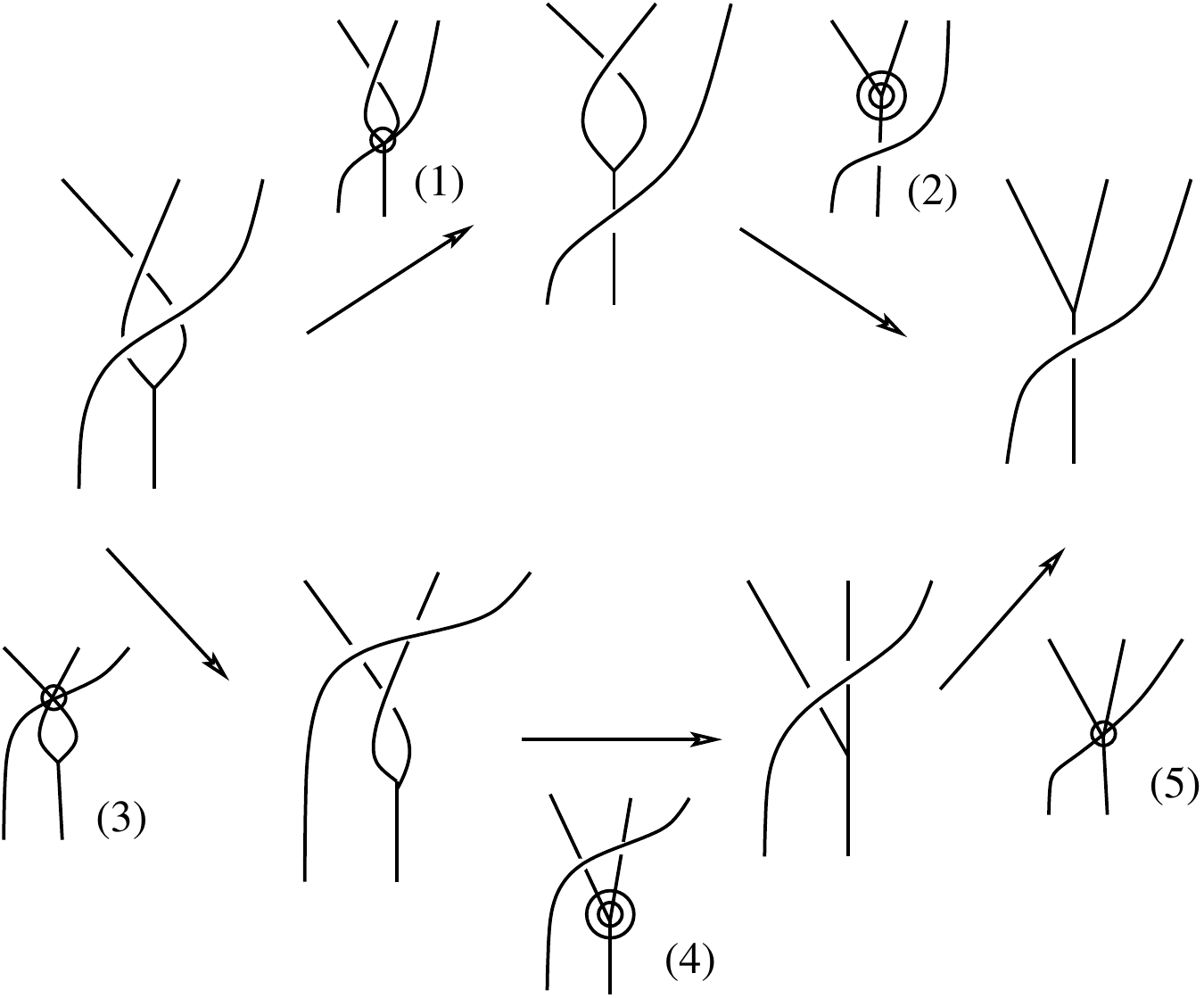}
\end{center}
\caption{}
\label{BCdiff3}
\end{figure}

The YI component of the third differential is diagrammatically represented in Figure~\ref{BCdiff3}. 
 Similar diagrammatic interpretation can be given to the IY component of $\delta^3_{\rm BC}$.
There are two sequences of graph moves from the left-most diagram to the right-most,
using the moves from Figure~\ref{BA} (D), (E) and Figure~\ref{BCeq}.
The first move labeled (1) at the upper left, for example, is the 
 YI move depicted in  Figure~\ref{BA} (E), 
and this move is represented by the circled 5-valent vertex (3 top, 2 bottom edges) 
This 5-valent vertex represents the cocycle $\alpha$, 
and the crossing above it with a string at right represents $(R \otimes \mathbb 1 ) $, 
 so that the graph at (1) indicates the first term  
$\alpha  (R \otimes \mathbb 1 ) $
 in the differential
$\delta^3_{\rm BC:YI} ( \alpha \oplus \beta \oplus \tau )$ of Definition~\ref{def:d3}. 
Similarly, the graph diagrams at (2) through (5) represent the second and the fifth terms of $\delta^3_{\rm BC:YI} $.
The last three terms  (3), (4) and (5) correspond to the lower arrows in Figure~\ref{BCdiff3} and receive
the negative signs.

We extend the braided commutative complex up to degree 4 as follows.

\begin{lemma}\label{lem:deg4}
The  following sequence forms a cochain complex: 
$$
C^1_{\rm BC} (V,V) \stackrel{\delta^1_{\rm BC}}{\xrightarrow{\hspace*{1cm}}} 
C^2_{\rm BC} (V,V) \stackrel{\delta^2_{\rm BC}}{\xrightarrow{\hspace*{1cm}}} 
C^3_{\rm BC} (V,V) \stackrel{\delta^3_{\rm BC}}{\xrightarrow{\hspace*{1cm}}} 
C^4_{\rm BC} (V,V) .
$$
\end{lemma}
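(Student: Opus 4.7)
The plan is to verify $\delta^3_{\rm BC}\circ\delta^2_{\rm BC}=0$ on an arbitrary $2$-cochain $(\phi,\psi)\in C^2_{\rm BC}(V,V)$; combined with Lemma~\ref{lem:sub} this gives the cochain property through degree $4$. By construction, $\delta^2_{\rm BC}(\phi\oplus\psi)$ breaks into its YBH-part $\delta^2_{\rm YBH}(\phi\oplus\psi)$ and the new BC-summand $\tau:=\mu\phi+\psi R-\psi \in C^{2,1}_{\rm BC}$, while $\delta^3_{\rm BC}$ is itself the direct sum of $\delta^3_{\rm YBH}$ and the two extra maps $\delta^3_{\rm BC:YI}$, $\delta^3_{\rm BC:IY}$ landing in the new summand $\mathrm{Hom}(V^{\otimes 3},V^{\otimes 2})$. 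Thus the identity $\delta^3_{\rm BC}\delta^2_{\rm BC}=0$ decomposes into three sub-identities.

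The YBH-part, $\delta^3_{\rm YBH}\circ\delta^2_{\rm YBH}(\phi\oplus\psi)=0$, is exactly the cochain-complex property for Yang-Baxter Hochschild cohomology extended to degree $4$, so I would cite the corresponding result from \cite{SZ-YBH} (the degree-$3$ case is already Proposition~\ref{pro:cochain}). This reduces the problem to proving that $\delta^3_{\rm BC:YI}$ and $\delta^3_{\rm BC:IY}$, evaluated on the relevant triples of components of $\delta^2_{\rm BC}(\phi\oplus\psi)$ together with $\tau$, vanish. By the evident left-right symmetry between $\delta^3_{\rm BC:YI}$ and $\delta^3_{\rm BC:IY}$ (obtained by reflecting the diagrams of Figure~\ref{BCdiff3}), it suffices to treat the YI-case; the IY-case follows by the same bookkeeping on the mirrored diagrams.

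The YI-identity is then a direct diagrammatic calculation in the spirit of Lemma~\ref{lem:sub}, using the graphical conventions of Figures~\ref{BA}, \ref{BCeq}, \ref{IY2cocy}, and \ref{BCdiff3}. Substituting $\alpha=\delta^2_{\rm YI}(\phi\oplus\psi)$, $\beta=\delta^2_{\rm YB}(\phi)$, and $\tau=\mu\phi+\psi R-\psi$ into
\begin{equation*}
\delta^3_{\rm BC:YI}(\alpha\oplus\beta\oplus\tau)=\bigl[\alpha(R\otimes\mathbb 1)+R(\tau\otimes\mathbb 1)\bigr]-\bigl[(\mathbb 1\otimes\mu)\beta+(\mathbb 1\otimes\tau)(R\otimes\mathbb 1)(\mathbb 1\otimes R)+\alpha\bigr]
\end{equation*}
expands into roughly two dozen diagrams, each containing a single circled vertex (either $\phi$ or $\psi$). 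I would then match these in cancelling pairs according to the position of the circled vertex: diagrams with $\phi$ inside a braid triangle cancel via the YBE contributions coming from the two copies of $\beta$; diagrams where a $\phi$- or $\psi$-vertex sits between a crossing and a multiplication cancel against the $(\mathbb 1\otimes\mu)(\phi\otimes\mathbb 1)(\mathbb 1\otimes R)$ and $(\mathbb 1\otimes\mu)(R\otimes\mathbb 1)(\mathbb 1\otimes\phi)$ summands of $\alpha$ via YI; and the three summands of $\tau$ split their work, with $\mu\phi$ pairing off against $\alpha$-terms carrying multiplications, $\psi R$ pairing off against $\alpha$-terms carrying the circled $\psi$-vertex, and the leftover $-\psi$ matching the remaining copy of $\alpha$ on the right of the equation.

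The main obstacle is bookkeeping rather than any deep identity: every cancellation is an instance of associativity, of the YBE, of one of the compatibility conditions~\eqref{eqn:YI}--\eqref{eqn:IY}, or of a Reidemeister-type move on the underlying graph. I would present the argument as in Lemma~\ref{lem:sub}, by drawing the expanded sum and labeling cancelling pairs with matching numerals, thereby exhibiting the vanishing of $\delta^3_{\rm BC:YI}\circ\delta^2_{\rm BC}$ and, by mirror, of $\delta^3_{\rm BC:IY}\circ\delta^2_{\rm BC}$. Note that, as for Lemma~\ref{lem:sub}, no use is made of $\mu R=\mu$ on $V$ itself, so the cochain complex is well defined for every braided algebra.
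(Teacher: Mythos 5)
Your overall strategy coincides with the paper's: split $\delta^3_{\rm BC}\delta^2_{\rm BC}$ into the YBH summand (delegated to \cite{SZ-YBH}) plus the two new summands $\delta^3_{\rm BC:YI}$ and $\delta^3_{\rm BC:IY}$, feed the components $\alpha=\delta^2_{\rm YI}(\phi\oplus\psi)$, $\beta=\delta^2_{\rm YB}(\phi)$, $\tau=\mu\phi+\psi R-\psi$ of $\delta^2_{\rm BC}(\phi\oplus\psi)$ into the YI formula, expand into roughly two dozen diagrams, and cancel in pairs, obtaining the IY case by the mirror symmetry. This is exactly the argument of the paper (carried out in Figure~\ref{BCdiff2diff3}), and your identification of the inputs and the term count are correct.

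There is, however, one genuine error: the closing claim that ``no use is made of $\mu R=\mu$ on $V$ itself, so the cochain complex is well defined for every braided algebra.'' This is false, and it already fails for Lemma~\ref{lem:sub}. The BC component of $\delta^2_{\rm BC}\delta^1_{\rm BC}(f)$ reduces, after the trivial cancellations, to
\[
(\mu R-\mu)(f\otimes\mathbb 1)+(\mu R-\mu)(\mathbb 1\otimes f)-f(\mu R-\mu),
\]
which equals $\mu R-\mu$ for $f=\mathbb 1$ and hence vanishes for all $f$ only when the undeformed algebra is braided commutative. The same phenomenon occurs in your degree-three computation: among the twenty-two terms of $\delta^3_{\rm BC:YI}(\alpha\oplus\beta\oplus\tau)$, the pairs $-\phi(\mu\otimes\mathbb 1)(R\otimes\mathbb 1)+\phi(\mu\otimes\mathbb 1)$, as well as $(\mathbb 1\otimes\mu)(\mathbb 1\otimes R)(R\otimes\mathbb 1)(\mathbb 1\otimes\phi)-(\mathbb 1\otimes\mu)(R\otimes\mathbb 1)(\mathbb 1\otimes\phi)$ and $(\mathbb 1\otimes\mu)(\mathbb 1\otimes R)(\phi\otimes\mathbb 1)(\mathbb 1\otimes R)-(\mathbb 1\otimes\mu)(\phi\otimes\mathbb 1)(\mathbb 1\otimes R)$, cancel only by $\mu R=\mu$ (the move of Figure~\ref{BCeq}), not by associativity, the YBE, or \eqref{eqn:YI}--\eqref{eqn:IY}. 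So braided commutativity of $(V,\mu,R)$ is an essential hypothesis, consistent with the fact that you list Figure~\ref{BCeq} among the moves you intend to use; your final sentence contradicts your own cancellation scheme and should be deleted. With that correction, the plan is sound and matches the paper's proof.
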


\begin{proof}
In  Figure~\ref{BCdiff2diff3}, the five terms of $\delta^3_{\rm BC:IY}$ as depicted in Figure~\ref{BCdiff3} (1) -- (5)
are listed in the left-hand side.
In the right-hand side, substitutions of $\delta^2_{\rm BC}(\phi \oplus \psi)$ are depicted.
For (1), the left-hand side represents 
$\alpha  (R \otimes \mathbb 1 ) $ 
 as above, and 
the substitution of $\alpha = \delta^2_{\rm BC}(\phi \oplus \psi)$ is depicted in the right-hand side of the top line.
Since $\alpha$ is a 
 Yang-Baxter Hochschild 3-cocycle, we have $\alpha = \delta^2_{\rm YBH}(\psi)$. 
The terms of $\delta^2_{\rm YBH}(\psi)$ are substituted in 
the 5-valent vertex to obtain the diagrams of the right-hand side of (1). 
Similarly, the right-hand side of (3) with negative sign corresponds to the negative 
of the Yang-Baxter 2-cocycle condition depicted in Figure~\ref{YBdiff2}.
Thus the total of the right-hand side of (1) through (5) represents $\delta^3_{\rm BC:YI} \delta^2_{\rm BC}(\phi \oplus \psi)$, and  all labeled terms cancel in pairs with opposite signs, showing that 
$\delta^3_{\rm BC:YI} \delta^2_{\rm BC}(\phi \oplus \psi)=0$. 
Similar computations show that $\delta^3_{\rm BC:YI} \delta^2_{\rm BC}(\phi \oplus \psi)=0$, therefore implying that $\delta^3_{\rm BC} \delta^2_{\rm BC}(\phi \oplus \psi)=0$.
\end{proof}

\begin{figure}[htb]
\begin{center}
\includegraphics[width=3.7in]{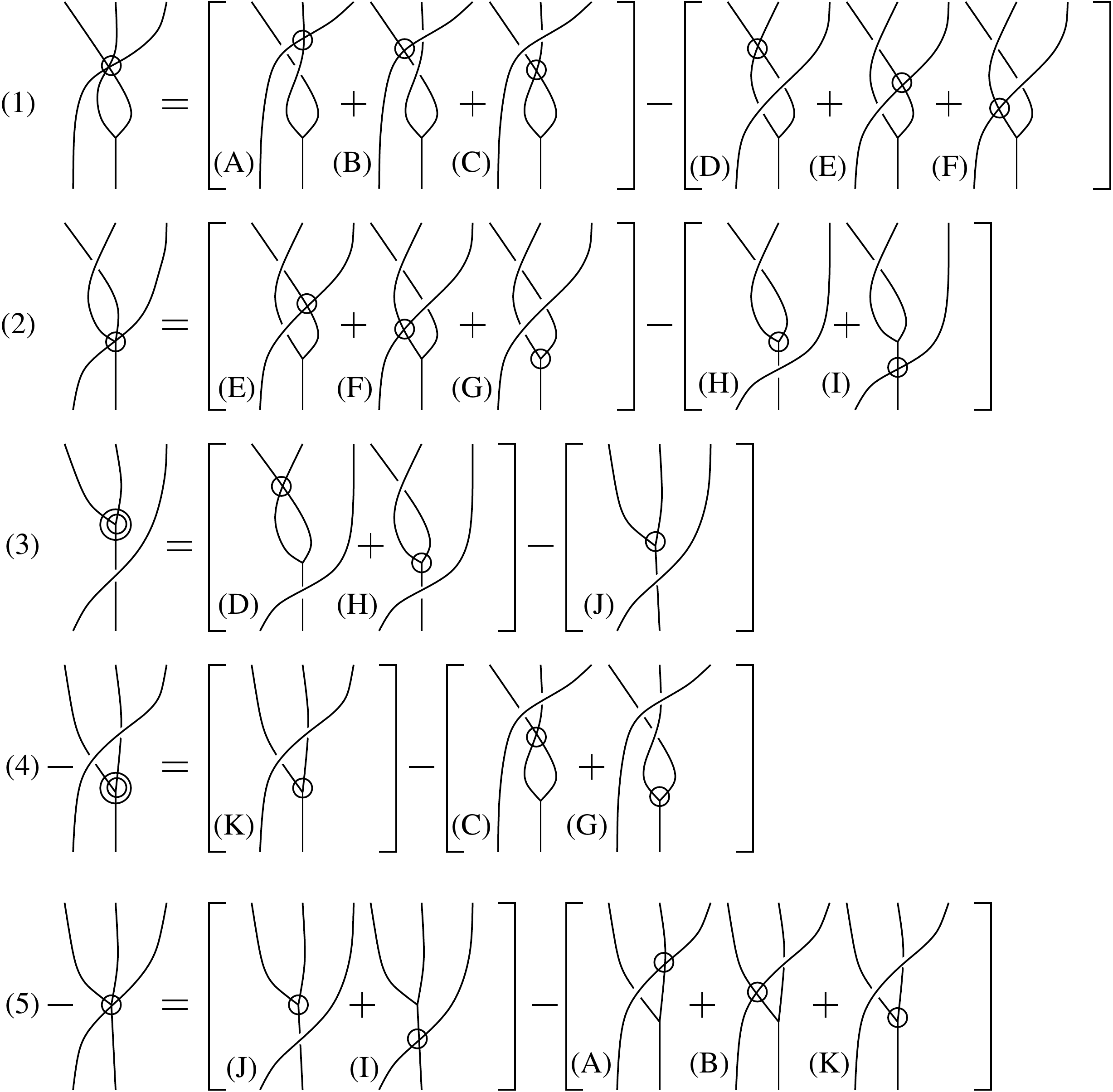}
\end{center}
\caption{}
\label{BCdiff2diff3}
\end{figure}

For the rest of the section, we show that the third cohomology group  contains the obstruction to secondary deformation. First we review such results for Yang-Baxter Hochschild cohomology.

	Let $(V,\mu)$ be an algebra over ${\mathbb k}$, and let $\psi=\psi_1$ denote a Hochschild $2$-cocycle,
	$\psi_1 \in Z^2_{\rm H}(V,V)$. 
As before we set  $(\tilde V, \tilde \mu)$, where $\tilde V= V \otimes  ({\mathbb k}[[\hbar]])/(\hbar^2)$
and $\tilde \mu = \mu + \hbar \psi_1$.
Next we consider $(\hat V, \hat \mu)$ where 
$\hat V= (V \otimes {\mathbb k}[[\hbar]])/(\hbar^3)$ and $\hat  \mu = \mu + \hbar \psi_1 + \hbar^2 \psi_2$.
	One computes the degree two term of $\hbar$ to be 
\begin{eqnarray*}
& & \hbar^2 [ \{  \mu (\psi_2 \otimes \mathbb{1} ) +  \psi_2 (\mu \otimes \mathbb{1} ) 
-  \mu (\mathbb{1}  \otimes \psi_2) -  \psi_2 (\mathbb{1}  \otimes \mu) ) \} 
+ \{ \psi_1 (\psi_1 \otimes \mathbb{1} ) - \psi_1 (\mathbb{1}  \otimes \psi_1) ) \} ]
  \\
& & = 
\hbar^2 [ \delta^1 (\psi_2) + \{  \psi_1 (\psi_1 \otimes \mathbb{1} ) - \psi_1 (\mathbb{1}  \otimes \psi_1) \} ] .
\end{eqnarray*}
Set $\Theta_2 (\psi_1) = \psi_1 (\psi_1 \otimes \mathbb{1} ) - \psi_1 (\mathbb{1}  \otimes \psi_1)$.
Then this computation  shows the following well known lemma.

\begin{lemma} \label{lem:Hdeform2}
 The above defined $(\hat V, \hat \mu)$ is an algebra such that 
 $\hat \mu \equiv_{(\hbar^2)} \mu$
 if and only if 	$\delta^2 (\psi_2) +\Theta_2 (\psi_1) =0$. 
\end{lemma}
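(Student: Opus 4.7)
The plan is to verify the claim by direct expansion of the associativity identity $\hat\mu(\hat\mu \otimes \mathbb 1) = \hat\mu(\mathbb 1 \otimes \hat\mu)$ on $\hat V^{\otimes 3}$, working modulo $\hbar^3$ and sorting the result by powers of $\hbar$. Substituting $\hat\mu = \mu + \hbar\psi_1 + \hbar^2\psi_2$ into both sides, distributing composition over the sums, and discarding all terms of order $\hbar^3$ or higher (which vanish in $V \otimes \mathbb k[[\hbar]]/(\hbar^3)$) produces polynomial expressions of degree at most two in $\hbar$. Because $\mathbb k[[\hbar]]/(\hbar^3)$ is free of rank three over $\mathbb k$, equality of the two sides is equivalent to the simultaneous vanishing of the coefficients of $\hbar^0$, $\hbar^1$, and $\hbar^2$.

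Next I would check each order separately. The coefficient of $\hbar^0$ is $\mu(\mu \otimes \mathbb 1) - \mu(\mathbb 1 \otimes \mu)$, which vanishes because $(V,\mu)$ is associative. The coefficient of $\hbar^1$ is $\mu(\psi_1 \otimes \mathbb 1) + \psi_1(\mu \otimes \mathbb 1) - \mu(\mathbb 1 \otimes \psi_1) - \psi_1(\mathbb 1 \otimes \mu)$, which is precisely $\delta^2_{\rm H}(\psi_1)$ in the sign convention adopted in this paper, and therefore vanishes by the hypothesis $\psi_1 \in Z^2_{\rm H}(V,V)$. The coefficient of $\hbar^2$ then splits into two groups: the four terms linear in $\psi_2$ assemble into $\delta^2_{\rm H}(\psi_2)$, while the genuinely quadratic cross terms arising from multiplying the two factors of $\hbar\psi_1$ against each other give $\psi_1(\psi_1 \otimes \mathbb 1) - \psi_1(\mathbb 1 \otimes \psi_1) = \Theta_2(\psi_1)$. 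This is exactly the computation previewed in the paragraph preceding the statement.

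Combining the three cases, the associativity of $\hat\mu$ modulo $\hbar^3$ is equivalent to $\delta^2_{\rm H}(\psi_2) + \Theta_2(\psi_1) = 0$, which establishes both directions of the equivalence claimed in the lemma; the congruence asserting that $\hat\mu$ restricts to $\mu$ below the cocycle data is immediate from the form $\mu + \hbar\psi_1 + \hbar^2\psi_2$. There is no substantive obstacle in this argument; it is a routine bookkeeping exercise, and the only care required is to track every cross term in the expansion so that the $\hbar^2$-coefficient is correctly identified as $\delta^2_{\rm H}(\psi_2) + \Theta_2(\psi_1)$, and to note that the $\hbar^0$ and $\hbar^1$ contributions vanish unconditionally under the standing hypotheses.
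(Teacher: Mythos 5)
Your proposal is correct and follows essentially the same route as the paper: the paper's own justification is precisely the displayed expansion of the associativity of $\hat\mu$ modulo $\hbar^3$, whose degree-two coefficient in $\hbar$ is identified as $\delta^2_{\rm H}(\psi_2)+\Theta_2(\psi_1)$, with the lower-order terms vanishing by associativity of $\mu$ and the hypothesis $\psi_1\in Z^2_{\rm H}(V,V)$. Your bookkeeping by powers of $\hbar$ and the identification of the cross terms as $\Theta_2(\psi_1)$ match the paper's computation exactly.
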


 In this case we say that $(\hat V, \hat \mu)$ is a 
 degree 2 deformation of $(V, \mu)$.
The following is known. 

\begin{lemma}[\cite{Gerst}] \label{lem:H3cocy}
$\delta^3_{\rm H}\Theta_2 (\psi_1) =0$. 
\end{lemma}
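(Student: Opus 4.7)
The strategy is a direct expansion of $\delta^3_{\rm H}\Theta_2(\psi_1)$ followed by repeated application of the $2$-cocycle condition $\delta^2_{\rm H}\psi_1 = 0$. Following the sign conventions used for $\delta^2_{\rm H}$ in this paper, the third Hochschild differential on a cochain $\eta \in C^3_{\rm H}(V,V)$ reads
\begin{equation*}
\delta^3_{\rm H}(\eta) = \mu(\eta \otimes \mathbb{1}) + \eta(\mu \otimes \mathbb{1} \otimes \mathbb{1}) - \eta(\mathbb{1} \otimes \mu \otimes \mathbb{1}) + \eta(\mathbb{1} \otimes \mathbb{1} \otimes \mu) - \mu(\mathbb{1} \otimes \eta).
\end{equation*}
I would first expand $\delta^3_{\rm H}\bigl(\psi_1(\psi_1 \otimes \mathbb{1})\bigr)$ and $\delta^3_{\rm H}\bigl(\psi_1(\mathbb{1} \otimes \psi_1)\bigr)$ separately, producing five signed terms apiece.

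Next, I would reorganize the resulting ten terms into packets, each of which matches (after suitable tensoring with identities on one side) an instance of the cocycle relation
\begin{equation*}
\mu(\psi_1 \otimes \mathbb{1}) + \psi_1(\mu \otimes \mathbb{1}) - \mu(\mathbb{1} \otimes \psi_1) - \psi_1(\mathbb{1} \otimes \mu) = 0,
\end{equation*}
applied to a composition whose outer cochain is an inner copy of $\psi_1$. Each such packet collapses by the cocycle identity on $\psi_1$. The residual terms --- those where $\mu$ lands between the two copies of $\psi_1$, of the form $\pm\psi_1(\psi_1 \otimes \mathbb{1})(\mathbb{1} \otimes \mu \otimes \mathbb{1})$ and $\pm\psi_1(\mathbb{1} \otimes \psi_1)(\mathbb{1} \otimes \mu \otimes \mathbb{1})$ --- are rewritten via associativity of $\mu$ and canceled by one further application of the cocycle condition.

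The main obstacle is purely bookkeeping: correctly pairing the ten terms into cocycle packets, which is cleanest when carried out via the tree diagrammatics used for Figure~\ref{Hochdiff2}. A conceptually nicer route is to recognize $\Theta_2(\psi_1)$ as (a multiple of) the Gerstenhaber bracket $[\psi_1,\psi_1]_G$, so that the graded Leibniz property of the Hochschild differential with respect to $[-,-]_G$ yields $\delta^3_{\rm H}[\psi_1,\psi_1]_G = 2[\delta^2_{\rm H}\psi_1, \psi_1]_G = 0$ immediately from $\delta^2_{\rm H}\psi_1 = 0$; the only cost there is matching the paper's (nonstandard) sign convention to the standard Gerstenhaber one. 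In either approach, no specifically braided structure is used --- the lemma is a statement about associative algebras alone --- which is why it is attributed to the classical Gerstenhaber reference.
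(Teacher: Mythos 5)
The paper does not actually prove this lemma---it is quoted as ``known'' with the citation to Gerstenhaber, so there is no in-paper argument to compare against. Your direct-expansion strategy is precisely Gerstenhaber's original computation, it is the right way to supply the missing proof, and it works over an arbitrary unital ring $\mathbb k$. However, one concrete step would fail as written: your formula for $\delta^3_{\rm H}$ carries the wrong sign on the term $\mu(\eta\otimes\mathbb 1)$. The paper's $\delta^2_{\rm H}$ is the negative of the standard Hochschild differential, with strictly alternating signs on the faces $\mu(\mathbb 1\otimes\psi),\ \psi(\mu\otimes\mathbb 1),\ \psi(\mathbb 1\otimes\mu),\ \mu(\psi\otimes\mathbb 1)$; the consistent degree-$3$ extension is
\begin{equation*}
\delta^3_{\rm H}(\eta)=-\mu(\mathbb 1\otimes\eta)+\eta(\mu\otimes\mathbb 1\otimes\mathbb 1)-\eta(\mathbb 1\otimes\mu\otimes\mathbb 1)+\eta(\mathbb 1\otimes\mathbb 1\otimes\mu)-\mu(\eta\otimes\mathbb 1),
\end{equation*}
whereas you put $+\mu(\eta\otimes\mathbb 1)$. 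With your signs one checks that $\delta^3_{\rm H}\delta^2_{\rm H}\neq 0$, and indeed $\delta^3_{\rm H}\Theta_2(\psi_1)=2\,\mu(\Theta_2(\psi_1)\otimes\mathbb 1)$, which is not zero in general; so the ten terms cannot be packaged into vanishing cocycle packets. Once the sign is corrected, the bookkeeping you describe does close: the three relevant instances of the cocycle condition are those with arguments $(\psi_1(a,b),c,d)$, $(a,\psi_1(b,c),d)$ and $(a,b,\psi_1(c,d))$, the cross terms $\psi_1(a,b)\psi_1(c,d)$ cancel between the first and third packets, and the remaining terms match the expansion of $\delta^3_{\rm H}\Theta_2(\psi_1)$.

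A second caveat concerns your ``conceptually nicer route.'' Since $[\psi_1,\psi_1]_G=2\,\psi_1\circ\psi_1=2\,\Theta_2(\psi_1)$ for a $2$-cochain, the graded Leibniz rule for the bracket only yields $2\,\delta^3_{\rm H}\Theta_2(\psi_1)=0$, and you cannot divide by $2$ over a general unital ring $\mathbb k$ (the paper makes no characteristic assumption and later even works in characteristic $2$). The clean conceptual argument uses Gerstenhaber's compatibility identity for the circle product itself rather than for the bracket: for $2$-cochains,
\begin{equation*}
\delta_{\rm H}(f\circ g)=-\,\delta_{\rm H}f\circ g+f\circ\delta_{\rm H}g+f\smile g-g\smile f,
\end{equation*}
which at $f=g=\psi_1$ gives $\delta^3_{\rm H}(\psi_1\circ\psi_1)=0$ on the nose from $\delta^2_{\rm H}\psi_1=0$, with no torsion issue. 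Either the corrected direct expansion or this identity completes the proof; neither uses the braided structure, consistent with the paper's attribution of the lemma to the classical reference.
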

 
By this lemma,  $\Theta_2 (\psi_1) $ is a Hochschild 3-cocycle, representing an element 
$[\Theta_2 (\psi_1) ] \in H^3_{\rm H}(V,V)$.
If  $\delta^2 (\psi_2) +\Theta_2 (\psi_1) =0$ as in Lemma~\ref{lem:Hdeform2},
then $\Theta_2 (\psi_1) $ is a coboundary, $[\Theta_2 (\psi_1) ] =0$, and the degree 1 deformation $(\tilde V, \tilde \mu)$
of $(V, \mu)$ further deforms to $(\hat V, \hat \mu)$.
In particular, if $H^3(V,V)=0$, then $(V, \mu)$  deforms to $(\hat V, \hat \mu)$.
Thus $H^3(V,V)$ can be regarded as obstruction to secondary deformation.

In \cite{SZ-YBH},
the following computation is made for the degree $n$ terms. 
Let $(V,R,\mu)$ be a braided algebra. Then we will use the notation $R_n = \sum_{i=0}^n \phi_i\hbar^i $ and $\mu_n = \sum_{i=0}^n \psi_i\hbar^i$,
where $\phi_0 := R$ and $\psi_0 := \mu$. For all $r\geq 2$ we also set
$$
\Gamma_r := \{(i,j,k)\ |\ i+j+k = r,\ i,j,k\neq r \}.
$$
Then, we define $\Xi^{IY}_r := \sum_{(i,j,k)\in \Gamma_r} (\psi_i\otimes \mathbb 1)(\mathbb 1\otimes \phi_j)(\phi_k\otimes 1)$ and $\Omega^{IY}_r := \sum_{(p,q,0)\in \Gamma_r} \phi_p(\mathbb 1\otimes \psi_q)$.  Similarly we also set $\Xi^{YI}_r := \sum_{(i,j,k)\in \Gamma_r} (\mathbb 1\otimes \psi_i)(\phi_j\otimes \mathbb 1)(\mathbb 1\otimes \phi_k)$ and $\Omega^{YI}_r := \sum_{(p,q,0)\in \Gamma_r} \phi_p(\psi_q\otimes \mathbb 1)$. Lastly, we define $\Theta_r = \sum_{(i,j,k)\in \Gamma_r} (\phi_i\otimes \mathbb 1)(\mathbb 1\otimes \phi_j)(\phi_k\otimes \mathbb 1) - (\mathbb 1\otimes \phi_i)(\phi_j\otimes \mathbb 1)(\mathbb 1 \otimes \phi_k)$ and $\Lambda_r = \sum_{(p,q,0)\in \Gamma_r} \mu_p(\mu_q\otimes \mathbb 1) - \mu_p(\mathbb 1\otimes \mu_q)$. 

The following  lemma was proved by calculations  in \cite{SZ-YBH}.

\begin{lemma}[\cite{SZ-YBH}] \label{lem:higher}
	Let $(V,R,\mu)$ be a braided algebra, and let $(\tilde V, R_n, \mu_n)$ be a deformation of degree $n$. Then $(V, R_{n+1}, \mu_{n+1})$ is a deformation of $(V,R,\mu)$ of order $n+1$ if and only if 
	\begin{center}
		$
	\begin{cases}
		\delta^2_{\rm YB}\phi_{n+1} + \Theta_{n+1} = 0\\
		\delta^2_{\rm IY}(\phi_{n+1} \oplus \psi_{n+1}) + \Xi^{IY}_{n+1} - \Omega^{IY}_{n+1} = 0\\
		\delta^2_{\rm YI}(\phi_{n+1} \oplus \psi_{n+1}) + \Xi^{YI}_{n+1} - \Omega^{YI}_{n+1} = 0\\
		\delta^2_{\rm H}\psi_{n+1} + \Lambda_{n+1} =0.
	\end{cases}
	$
	\end{center}
\end{lemma}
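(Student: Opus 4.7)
The plan is to compute each of the four defining equations of a braided algebra -- the Yang-Baxter equation for $R_{n+1}$, the IY and YI compatibility conditions between $R_{n+1}$ and $\mu_{n+1}$, and the associativity of $\mu_{n+1}$ -- and to extract the coefficient of $\hbar^{n+1}$ in each. Since $(\tilde V, R_n, \mu_n)$ is by hypothesis a deformation of degree $n$, all coefficients of $\hbar^i$ for $0 \leq i \leq n$ in these four equations vanish automatically. Consequently, asking $(V, R_{n+1}, \mu_{n+1})$ to be a braided algebra modulo $\hbar^{n+2}$ is equivalent to the vanishing of the $\hbar^{n+1}$ coefficient of each axiom, since $\phi_{n+1}$ and $\psi_{n+1}$ are multiplied by $\hbar^{n+1}$ and so contribute only at orders $\geq n+1$.

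First I would substitute $R_{n+1} = R_n + \hbar^{n+1}\phi_{n+1}$ and $\mu_{n+1} = \mu_n + \hbar^{n+1}\psi_{n+1}$ into the YBE. Collecting the $\hbar^{n+1}$ terms and using the multinomial expansion, I would separate the sum into two parts: those where exactly one of the three tensor factors carries $\phi_{n+1}$ (with the other two carrying $R = \phi_0$), and those where all three factors carry lower-order $\phi_i$ with each $i \neq n+1$. The first part is, by definition of the Yang-Baxter differential, precisely $\delta^2_{\rm YB}\phi_{n+1}$; the second is $\Theta_{n+1}$, since the constraint $i+j+k = n+1$ with no index equal to $n+1$ is exactly the indexing set $\Gamma_{n+1}$. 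This yields the first of the four equations.

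I would treat the remaining three axioms analogously. For associativity, expanding $\mu_{n+1}(\mu_{n+1} \otimes \mathbb 1) - \mu_{n+1}(\mathbb 1 \otimes \mu_{n+1})$ at order $\hbar^{n+1}$ splits into terms where one of the two multiplication factors is $\psi_{n+1}$ and the other is $\mu$ (giving $\delta^2_{\rm H}\psi_{n+1}$) and terms where both factors are of intermediate order (giving $\Lambda_{n+1}$). The IY and YI conditions are more intricate because they mix $R$ and $\mu$: I would split the order-$(n+1)$ coefficient into contributions where a single $\phi_{n+1}$ appears (with all other factors being $R$ or $\mu$), contributions where a single $\psi_{n+1}$ appears, and purely intermediate-order contributions. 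The first two combine into $\delta^2_{\rm IY}(\phi_{n+1}\oplus\psi_{n+1})$ or $\delta^2_{\rm YI}(\phi_{n+1}\oplus\psi_{n+1})$, while the remaining ones split as $\Xi^{IY}_{n+1} - \Omega^{IY}_{n+1}$ (resp.\ YI) according to which side of the equation each term originally stood on -- the minus sign on $\Omega$ arising from transposing the right-hand side $R(\mathbb 1\otimes \mu)$ of IY (resp.\ $R(\mu\otimes \mathbb 1)$ of YI) to the left-hand side.

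The main obstacle will be bookkeeping: correctly parsing the indexing set $\Gamma_{n+1}$ in each obstruction term, verifying that the ``single-leading-factor'' contributions assemble into the differentials of Section~\ref{sec:YBH} with the correct sign conventions, and ensuring that no cross-terms are missed or double-counted. Because each of the four axioms has a different number of tensor factors and a different mix of $R$'s and $\mu$'s, the combinatorial accounting must be carried out separately for each. Once the accounting is in place, the four stated equations fall out directly, and the ``if and only if'' follows because the axioms decompose as direct sums over the distinct target tensor powers, so the four conditions are independent.
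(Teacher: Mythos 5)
Your proposal is correct and follows essentially the same route as the paper, which simply cites the direct order-by-order calculation from \cite{SZ-YBH}: expand each of the four braided-algebra axioms for $R_{n+1}=R_n+\hbar^{n+1}\phi_{n+1}$ and $\mu_{n+1}=\mu_n+\hbar^{n+1}\psi_{n+1}$, note that orders $\leq n$ vanish by the degree-$n$ hypothesis, and split the $\hbar^{n+1}$ coefficient into the linear-in-$(\phi_{n+1},\psi_{n+1})$ part (the differentials) and the part indexed by $\Gamma_{n+1}$ (the obstruction terms $\Theta$, $\Xi-\Omega$, $\Lambda$). Your bookkeeping of the indexing sets and of the sign on $\Omega^{\bullet}_{n+1}$ matches the paper's definitions, so no gap remains.
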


In a manner similar to Lemma~\ref{lem:Hdeform2} of the 
Hochschild case, the following is proved for Yang-Baxter Hochschild case.

\begin{lemma}[\cite{SZ-YBH}]  \label{lem:3cocy}
The obstruction to extending an infinitesimal deformation of braided algebras to a quadratic deformation lies in the third YBH cohomology group. More concisely, we have 
$$\delta_{\rm YBH}^3 (\Theta_2 ) =
\delta_{\rm YBH}^3 (\Xi^{\rm YI}_2 - \Omega^{\rm YI}_2 )=
		\delta_{\rm YBH}^3 ( \Xi^{\rm IY}_2 - \Omega^{\rm IY}_2 ) =
		\delta_{\rm YBH}^3 ( \Lambda_2 )=0. $$
\end{lemma}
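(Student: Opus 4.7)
The plan is to verify the four identities by direct computation, exploiting the fact that $(\phi_1,\psi_1)$ is a YBH $2$-cocycle, so that $\delta^2_{\rm YBH}(\phi_1\oplus\psi_1) = 0$, together with the fact that $R,\mu$ satisfy the YBE, YI, IY, and associativity.

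First I would unpack each obstruction explicitly. For $r = 2$ we have $\Gamma_2 = \{(1,1,0),(1,0,1),(0,1,1)\}$, so each of $\Theta_2, \Xi^{\rm YI}_2, \Xi^{\rm IY}_2$ is a sum of three terms built from two copies of $\phi_1$ (or $R$) together with one copy of $\mu, \psi_1,$ or $R$. Meanwhile $\Omega^{\rm YI}_2 = \phi_1(\psi_1\otimes \mathbb 1)$, $\Omega^{\rm IY}_2 = \phi_1(\mathbb 1\otimes \psi_1)$, and $\Lambda_2 = \psi_1(\psi_1\otimes \mathbb 1) - \psi_1(\mathbb 1\otimes \psi_1)$.

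Next, for each obstruction $X_2$ I would compute the relevant component of $\delta^3_{\rm YBH}(X_2)$ by fully expanding and then organizing the resulting terms diagrammatically. The strategy is: group the terms by their underlying skeleton (arrangement of trivalent vertices and crossings); for each pair whose skeletons differ by a single application of YBE, YI, IY, or associativity at a specific vertex or crossing, use that axiom on $R,\mu$ to rewrite one in terms of the other; the differences then assemble into expressions of the form $\delta^2_{\rm YBH}(\phi_1\oplus \psi_1)$ pre- or post-composed with a fixed map, which vanish by the $2$-cocycle hypothesis. Any surviving terms cancel in opposite-signed pairs thanks to the coherence of the axioms on $R$ and $\mu$. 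The $\Lambda_2$ identity recovers Gerstenhaber's classical statement of Lemma~\ref{lem:H3cocy} applied to $\psi_1$; $\Theta_2$ is its pure Yang-Baxter analog, relying on YBE applied to a four-strand configuration and the Yang-Baxter $2$-cocycle condition on $\phi_1$; the mixed cases $\Xi^{\rm YI}_2 - \Omega^{\rm YI}_2$ and $\Xi^{\rm IY}_2 - \Omega^{\rm IY}_2$ combine both sorts of cancellation via the YI and IY compatibilities, and require the mixed components of the $2$-cocycle condition relating $\phi_1$ and $\psi_1$.

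The main obstacle is the size and bookkeeping of the computation: each identity expands to a sum of over a dozen terms, and the relevant cancellations are not visible from the algebraic expressions alone. The diagrammatic conventions developed in Section~\ref{sec:YBH} are essential, converting the problem into a pattern-matching exercise on trivalent graphs with crossings in which each cancellation amounts to one application of a single axiom at a marked vertex or crossing. Once the diagrams are arranged, the four identities become a routine, if tedious, verification.
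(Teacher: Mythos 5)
Your outline is correct and is essentially the computation behind the cited result: the paper does not reprove Lemma~\ref{lem:3cocy} here but imports it from \cite{SZ-YBH}, where it is established exactly by the kind of term-by-term diagrammatic expansion you describe (the same pattern the present paper uses for the braided commutative analogue in Theorem~\ref{thm:higher_BC}, via Figures~\ref{BC2cocylist} and \ref{Diff30}: label every tree diagram, cancel opposite-signed pairs, and absorb the rest into $\delta^2_{\rm YBH}(\phi_1\oplus\psi_1)=0$ and the YBE/YI/IY/associativity axioms). Two remarks. First, a caution on bookkeeping: the components of $\delta^3_{\rm YBH}$ are \emph{not} diagonal with respect to the four summands --- e.g.\ $\delta^{3,1}_{\rm YI}(\beta\oplus\alpha)$ consumes both a $C^{3,3}$ input (where $\Theta_2$ lives) and a $C^{3,2}_{\rm YI}$ input (where $\Xi^{\rm YI}_2-\Omega^{\rm YI}_2$ lives) --- so the four displayed equalities must really be read as the single assertion that $\Theta_2\oplus(\Xi^{\rm IY}_2-\Omega^{\rm IY}_2)\oplus(\Xi^{\rm YI}_2-\Omega^{\rm YI}_2)\oplus\Lambda_2$ is a $3$-cocycle; a computation that feeds each obstruction to $\delta^3_{\rm YBH}$ in isolation cannot even be set up for the mixed components. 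You gesture at this ("require the mixed components of the $2$-cocycle condition"), but it should be made explicit. Second, the paper itself later gives a genuinely different and cleaner route: Theorem~\ref{thm:higher_YBH_deg_n} proves the degree-$n$ generalization by the Markl--Stasheff calculus of deviations, interpreting each summand of $\delta^3$ as the deviation attached to an arrow of a polygonal diagram and gluing the polygons into a sphere so that the signed sum of deviations vanishes automatically; specializing to $n=1$ recovers this lemma without tracking individual cancellations. Your brute-force approach buys concreteness and needs no extra machinery, while the deviations argument buys uniformity in the degree and makes the sign bookkeeping structural rather than ad hoc.
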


Using these lemmas, the following was obtained.

\begin{corollary}[\cite{SZ-YBH}] \label{cor:quadratic}
	If $H_{\rm YBH}^3(V,V) = 0$, then any infinitesimal deformation can be extended to a quadratic deformation.
\end{corollary}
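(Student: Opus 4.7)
The plan is to reduce the corollary directly to Lemma~\ref{lem:higher} and Lemma~\ref{lem:3cocy} by recognizing the extension problem as a cohomological obstruction in degree three.

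I would start with an arbitrary infinitesimal deformation $(\tilde V, R_1, \mu_1)$ of $(V, R, \mu)$, where $R_1 = R + \hbar\phi_1$ and $\mu_1 = \mu + \hbar\psi_1$. To promote it to a quadratic deformation I must produce $\phi_2 \in C^2_{\rm YB}(V,V)$ and $\psi_2 \in C^2_{\rm H}(V,V)$ so that $R_2 = R_1 + \hbar^2\phi_2$ and $\mu_2 = \mu_1 + \hbar^2\psi_2$ satisfy the YBE, YI, IY, and associativity conditions modulo $\hbar^3$. By Lemma~\ref{lem:higher} applied with $n=1$, this amounts to solving the four equations
\begin{align*}
\delta^2_{\rm YB}\phi_2 &= -\Theta_2, & \delta^2_{\rm IY}(\phi_2\oplus \psi_2) &= \Omega^{IY}_2 - \Xi^{IY}_2, \\
\delta^2_{\rm YI}(\phi_2\oplus \psi_2) &= \Omega^{YI}_2 - \Xi^{YI}_2, & \delta^2_{\rm H}\psi_2 &= -\Lambda_2.
\end{align*}

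Next, I would package the four right-hand sides into a single element $\omega$ of the direct sum $C^3_{\rm YBH}(V,V)$ via the decomposition recalled in Section~\ref{sec:YBH}. The system then becomes the single cohomological equation $\delta^2_{\rm YBH}(\phi_2\oplus\psi_2) = -\omega$. By Lemma~\ref{lem:3cocy}, each of the four components of $\omega$ is annihilated by the corresponding piece of $\delta^3_{\rm YBH}$, so $\omega \in Z^3_{\rm YBH}(V,V)$ and determines a class $[\omega] \in H^3_{\rm YBH}(V,V)$.

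Under the hypothesis $H^3_{\rm YBH}(V,V) = 0$, the class $[\omega]$ vanishes, so $\omega$ is a coboundary: there exists $(\phi_2,\psi_2) \in C^2_{\rm YBH}(V,V)$ with $\delta^2_{\rm YBH}(\phi_2\oplus\psi_2) = -\omega$. Projecting this identity onto the four summands of $C^3_{\rm YBH}(V,V)$ recovers the four displayed equations simultaneously and produces the desired quadratic extension. The main obstacle is structural rather than computational: one must verify that a single pair $(\phi_2,\psi_2)$ can solve all four equations at once, which is exactly why the four obstruction pieces must be assembled into one element of the total cochain group so that the single differential $\delta^2_{\rm YBH}$, which simultaneously targets all four summands of $C^3_{\rm YBH}(V,V)$ through the same cochains, can cobound them together.
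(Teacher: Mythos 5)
Your proposal is correct and follows exactly the route the paper intends: the corollary is stated as a direct consequence of Lemma~\ref{lem:higher} (with $n=1$) and Lemma~\ref{lem:3cocy}, which is precisely how you argue, with the obstruction components assembled into a single $3$-cocycle $\omega$ that must be cobounded when $H^3_{\rm YBH}(V,V)=0$. Your closing observation that the four equations are solved simultaneously because $\delta^2_{\rm YBH}$ maps one pair $(\phi_2,\psi_2)$ into all four summands of $C^3_{\rm YBH}(V,V)$ is exactly the point that makes the packaging work.
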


We now consider the effect of the braided commutativity constraint as an obstruction to higher order deformations. We define the term 
\begin{eqnarray}
		\Upsilon_r = \sum_{(i,j,0)\in \Gamma_r} \psi_i \phi_j.
\end{eqnarray}
Then, we have the following.

\begin{lemma}\label{lem:higher_BC}
	Let $(V,R,\mu)$ be a braided algebra, and let $(\tilde V, R_n, \mu_n)$ be a braided commutative deformation of degree $n$. Then $(V, R_{n+1}, \mu_{n+1})$ is a braided commutative deformation of $(V,R,\mu)$ of order $n+1$ if and only if 
	\begin{center}
		$
		\begin{cases}
		\delta^2_{\rm YB}\phi_{n+1} + \Theta_{n+1} = 0\\
		\delta^2_{\rm IY}(\phi_{n+1} \oplus\psi_{n+1}) + \Xi^{IY}_{n+1} - \Omega^{IY}_{n+1} = 0\\
		\delta^2_{\rm  YI}(\phi_{n+1} \oplus\psi_{n+1}) + \Xi^{YI}_{n+1} - \Omega^{YI}_{n+1} = 0\\
		\delta^2_{\rm H}\psi_{n+1} + \Lambda_{n+1} =0\\
		(\delta^2_{\rm BC} - \delta^2_{\rm YBH})(\phi_{n+1} \oplus \psi_{n+1}) + \Upsilon_{n+1} = 0.
		\end{cases}
		$
	\end{center}
\end{lemma}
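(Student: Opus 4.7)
\emph{Plan.} The statement naturally splits into two independent requirements: $(V, R_{n+1}, \mu_{n+1})$ is a braided commutative deformation of order $n+1$ if and only if it is a braided algebra deformation of order $n+1$ \emph{and} it satisfies $\mu_{n+1} R_{n+1} = \mu_{n+1}$ modulo $\hbar^{n+2}$. The first requirement is precisely the content of Lemma~\ref{lem:higher} and supplies the first four equations of the claimed system. Hence the only new content is to show that the braided commutativity constraint contributes exactly the fifth equation, namely $(\delta^2_{\rm BC} - \delta^2_{\rm YBH})(\phi_{n+1} \oplus \psi_{n+1}) + \Upsilon_{n+1} = 0$.

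To extract this fifth equation I would expand
\[
\mu_{n+1} R_{n+1} - \mu_{n+1} = \sum_{i,j=0}^{n+1} \psi_i \phi_j\, \hbar^{i+j} - \sum_{i=0}^{n+1} \psi_i\, \hbar^i
\]
and examine the coefficients of each power of $\hbar$. Since $(R_n, \mu_n)$ is assumed to be a braided commutative deformation of order $n$, and since $R_{n+1}$ and $\mu_{n+1}$ agree with $R_n$ and $\mu_n$ in degrees $\leq n$, the coefficients of $\hbar^0, \ldots, \hbar^n$ already vanish by hypothesis. The only new constraint therefore comes from the coefficient of $\hbar^{n+1}$, which reads
\[
\psi_{n+1}\phi_0 + \psi_0 \phi_{n+1} - \psi_{n+1} + \sum_{\substack{i+j = n+1 \\ i, j \neq n+1}} \psi_i \phi_j.
\]
Using $\phi_0 = R$ and $\psi_0 = \mu$, the first three terms combine into $\mu \phi_{n+1} + \psi_{n+1} R - \psi_{n+1}$, which by the definition of $\delta^2_{\rm BC}$ is exactly $(\delta^2_{\rm BC} - \delta^2_{\rm YBH})(\phi_{n+1} \oplus \psi_{n+1})$. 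The remaining sum is indexed by the triples $(i,j,0) \in \Gamma_{n+1}$ and therefore equals $\Upsilon_{n+1}$ by its defining formula. Setting this coefficient to zero yields the fifth condition.

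The main obstacle is not conceptual but purely combinatorial: one must carefully peel off the two ``boundary'' contributions ($i = n+1$ and $j = n+1$) from the convolution $\sum \psi_i \phi_j$ and identify the remaining ``interior'' contribution with $\Upsilon_{n+1}$ under the indexing convention of $\Gamma_{n+1}$. Once that bookkeeping is done, the match with $(\delta^2_{\rm BC} - \delta^2_{\rm YBH})$ is immediate from the definition of the braided commutative differential, and the equivalence follows. No deeper argument beyond Lemma~\ref{lem:higher} is required.
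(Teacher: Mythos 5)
Your proposal is correct and follows essentially the same route as the paper: invoke Lemma~\ref{lem:higher} for the first four equations, then expand the braided commutativity identity $\mu_{n+1}R_{n+1}=\mu_{n+1}$ in powers of $\hbar$, note that degrees $\le n$ hold by hypothesis, and split the degree-$(n+1)$ coefficient into the boundary terms $\mu\phi_{n+1}+\psi_{n+1}R-\psi_{n+1}=(\delta^2_{\rm BC}-\delta^2_{\rm YBH})(\phi_{n+1}\oplus\psi_{n+1})$ plus the interior sum $\Upsilon_{n+1}$. The bookkeeping you describe is exactly what the paper does.
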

\begin{proof}
		Applying Lemma~\ref{lem:higher}, it follows that the only constraint that we need to check is the last equation:
		\begin{eqnarray}\label{eqn:BC_higher}
			\delta^2_{\rm BC} (\phi_n \oplus \psi_n) + \Upsilon_{n+1} = 0.
		\end{eqnarray} 
		Braided commutativity on $(V, R_{n+1}, \mu_{n+1})$ gives the equation
		\begin{eqnarray*}
				R_{n+1}\mu_{n+1} - \mu_{n+1} = 0,
		\end{eqnarray*}
		which holds up to degree $n$ by hypothesis. On degree $n+1$ we obtain
		\begin{eqnarray}\label{eqn:BC_higher_expanded}
				\sum_{i+j=n+1} \psi_i  \phi_j - \psi_{n+1} = 0.
		\end{eqnarray}
		Using the definition of $\delta^2_{BC}$, we see that 
		$(\delta^2_{BC} - \delta^2_{\rm YBH})(\phi_{n+1} \oplus \psi_{n+1}) = \psi_{n+1} \phi_0 + \psi_0 \phi_{n+1} - \psi_{n+1}$. 
		Therefore, Equation~\eqref{eqn:BC_higher_expanded} gives precisely Equation~\eqref{eqn:BC_higher}. This means that a YBH deformation of degree $n+1$ is braided commutative if and only if Equation~\eqref{eqn:BC_higher} holds, therefore completing the proof. 
\end{proof}

For the the second order deformation of braided commutativity, one computes,
modulo $\hbar^3$, 
\begin{eqnarray*}
\tilde R  \tilde \mu - \tilde \mu &=&
(R + \hbar \phi_1   + \hbar^2 \phi_2) ( \mu   + \hbar \psi_1 + \hbar^2 \psi_2)
- ( \mu + \hbar \psi+ \hbar^2 \psi_2) \\
&=&
[R \mu - \mu] + \hbar [ R \psi_1 + \phi_1 \mu - \psi_1 ] 
+ \hbar^2 [ ( R \psi_2 + \phi_2 \mu - \psi_2 ) 
+ \phi_1 \psi_1 ] .
\end{eqnarray*}

Then for quadratic deformations, we have the following.

\begin{figure}[htb]
	\begin{center}
		\includegraphics[width=5in]{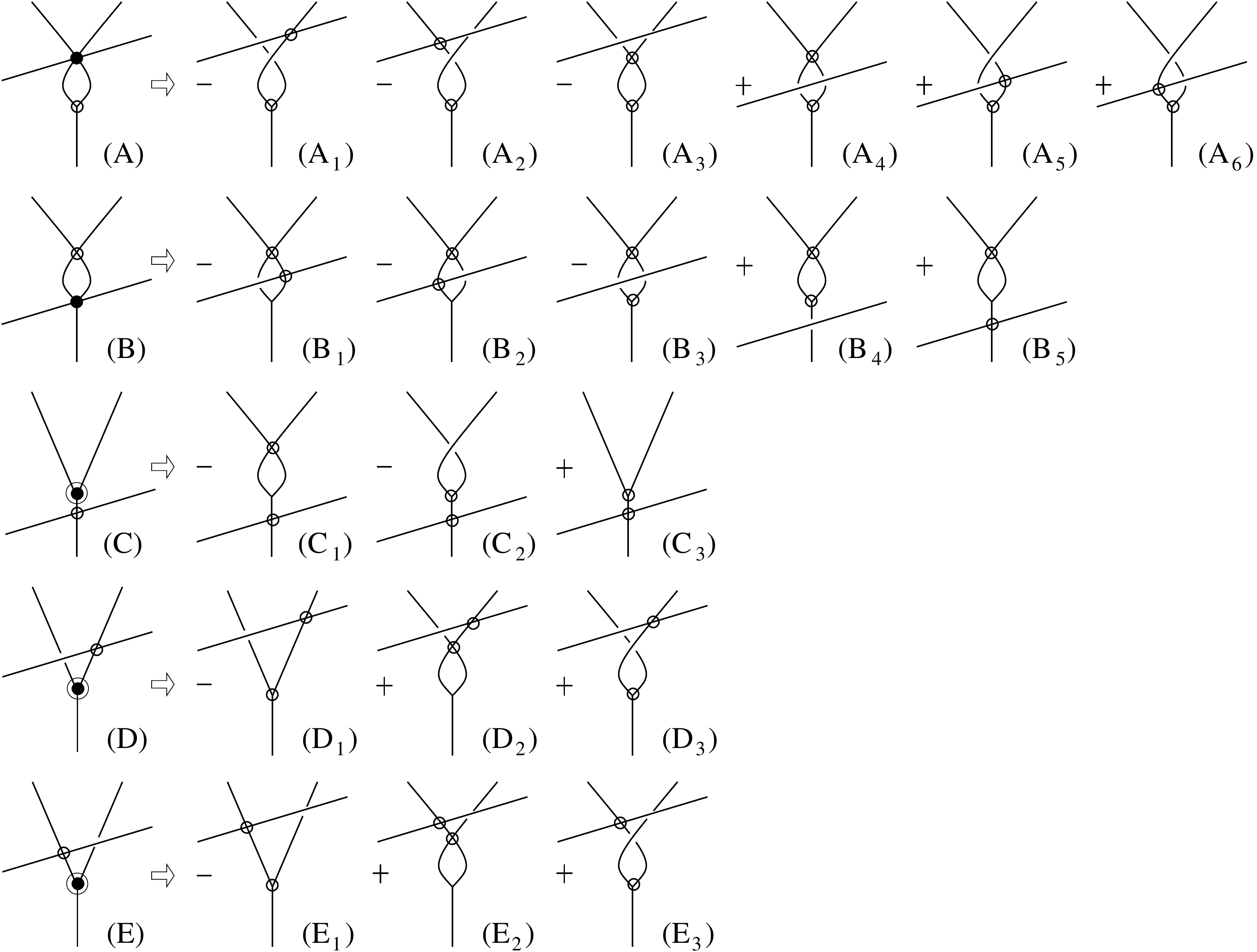}
	\end{center}
	\caption{}
	\label{BC2cocylist}
\end{figure}

\begin{theorem}\label{thm:higher_BC}
For $(\psi, \phi) \in Z^2_{\rm YBH} (X,X)$, we have 
$$\delta^3_{\rm BC:YI}(\Xi^{YI}_{2} - \Omega^{YI}_{2} - \phi \psi)=0 \quad {\rm and } \quad 
\delta^3_{\rm BC:IY}(\Xi^{IY}_{2} - \Omega^{IY}_{2} - \phi \psi)=0 . $$
Therefore, the obstruction to quadratic deformations lies in the third braided commutative cohomology group.
\end{theorem}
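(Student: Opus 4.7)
The plan is to reduce the claim to Lemma~\ref{lem:higher_BC} and a direct verification of two new 3-cocycle identities in the BC components of the differential. Applying Lemma~\ref{lem:higher_BC} at $n=1$ to a fixed infinitesimal BC deformation $(\phi,\psi)$, extension to a quadratic BC deformation amounts to finding $(\phi_2,\psi_2)$ such that $\delta^2_{\rm BC}(\phi_2\oplus\psi_2)$ equals (up to sign) the obstruction tuple whose YB, YI, IY, H, and BC components are $\Theta_2$, $\Xi^{YI}_2-\Omega^{YI}_2$, $\Xi^{IY}_2-\Omega^{IY}_2$, $\Lambda_2$, and $\Upsilon_2$, respectively. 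In the classical Gerstenhaber style, the obstruction is then a well-defined class in $H^3_{\rm BC}(V,V)$ precisely when this tuple lies in $\ker\delta^3_{\rm BC}$, and the final sentence of the theorem (that the obstruction lives in the third BC cohomology) follows immediately from the two displayed cocycle identities.

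For the YBH components of the obstruction tuple, Lemma~\ref{lem:3cocy} already supplies $\delta^3_{\rm YBH}(\Theta_2\oplus(\Xi^{YI}_2-\Omega^{YI}_2)\oplus(\Xi^{IY}_2-\Omega^{IY}_2)\oplus\Lambda_2)=0$, so the new work is entirely the two identities displayed in the theorem. I would tackle the YI equation first. Using Definition~\ref{def:d3}, I would expand
$$\delta^3_{\rm BC:YI}\bigl((\Xi^{YI}_2-\Omega^{YI}_2)\oplus\Theta_2\oplus\Upsilon_2\bigr)$$
as a sum of five composites, substitute
$$\Xi^{YI}_2=(\mathbb 1\otimes\mu)(\phi\otimes\mathbb 1)(\mathbb 1\otimes\phi)+(\mathbb 1\otimes\psi)(R\otimes\mathbb 1)(\mathbb 1\otimes\phi)+(\mathbb 1\otimes\psi)(\phi\otimes\mathbb 1)(\mathbb 1\otimes R),$$
together with $\Omega^{YI}_2=\phi(\psi\otimes\mathbb 1)$, $\Theta_2=(\phi\otimes\mathbb 1)(\mathbb 1\otimes\phi)(\phi\otimes\mathbb 1)-(\mathbb 1\otimes\phi)(\phi\otimes\mathbb 1)(\mathbb 1\otimes\phi)$, and $\Upsilon_2=\psi\phi$, and then reduce each resulting word on $V^{\otimes 3}$ using only four primitive moves: associativity of $\mu$, the YBE for $R$, the YI/IY relations of the braided algebra, and the undeformed braided commutativity $\mu R=\mu$. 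Once reduced, the remaining monomials are grouped so that the hypothesis that $(\phi,\psi)$ is a YBH $2$-cocycle, i.e.\ $\delta^2_{\rm YB}(\phi)=0$, $\delta^2_{\rm H}(\psi)=0$, $\delta^2_{\rm YI}(\phi\oplus\psi)=0$, and $\delta^2_{\rm IY}(\phi\oplus\psi)=0$, together with the braided commutativity relation $\mu\phi+R\psi-\psi=0$ (which holds because $(\phi,\psi)\in Z^2_{\rm BC}$ in our setting), cancels them in matched pairs. The IY identity is handled in a completely parallel mirror-image calculation.

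The main obstacle will be bookkeeping: the expansion generates on the order of a dozen composite terms whose pairwise cancellations must be tracked precisely. The cleanest presentation mirrors the diagrammatic style of Figure~\ref{BCdiff2diff3} in the proof of Lemma~\ref{lem:deg4}, where each composite is drawn and the cancellations are labelled in pairs. Conceptually the argument is the ``quadratic'' analogue of the identity $\delta^3_{\rm BC}\circ\delta^2_{\rm BC}=0$ already proved in Lemma~\ref{lem:deg4}: if one formally substitutes $\delta^1$-coboundaries into that calculation, one recovers exactly the $\Xi^{YI}_2-\Omega^{YI}_2$, $\Theta_2$, and $\Upsilon_2=\psi\phi$ terms, so the same collection of YI, IY, YBE, associativity and braided commutativity moves should dispose of them here.

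Once both identities are established, the closing sentence of the theorem is immediate. By Lemma~\ref{lem:higher_BC}, a braided commutative quadratic extension $(\phi_2,\psi_2)$ exists if and only if the obstruction tuple is a $\delta^2_{\rm BC}$-coboundary; the identities just proved ensure it is a $\delta^3_{\rm BC}$-cocycle, so its class in $H^3_{\rm BC}(V,V)$ is the genuine obstruction, and in particular vanishing of $H^3_{\rm BC}(V,V)$ forces extendability to the quadratic deformation.
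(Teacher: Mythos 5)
Your proposal is correct and follows essentially the same route as the paper: both expand $\delta^3_{\rm BC:YI}$ (and its IY mirror) on the tuple $(\Xi_2-\Omega_2)\oplus\Theta_2\oplus\Upsilon_2$, cancel the resulting composites in matched pairs against the $2$-cocycle conditions (the paper does this diagrammatically in Figures~\ref{BC2cocylist} and \ref{Diff30}, exactly in the style of Lemma~\ref{lem:deg4} that you anticipate), and then combine with Lemma~\ref{lem:3cocy} and Lemma~\ref{lem:higher_BC} to conclude that the obstruction lies in $H^3_{\rm BC}$. The only caveat is that you additionally invoke the relation $\mu\phi+\psi R-\psi=0$, which is not literally part of the stated hypothesis $(\psi,\phi)\in Z^2_{\rm YBH}(X,X)$; since the theorem is applied to infinitesimal deformations that are already braided commutative, this is harmless and consistent with how the paper uses the result.
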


\begin{proof}
In Figure~\ref{BC2cocylist}, the 2-cocycle conditions are represented diagrammatically
with addtional 2-cocycle placed, and each term is labeled. 
In these figures the 3-cocycle is represented by black vertices to distinguish them from
2-cocycles in white circles.
Each row is equated to zero as 2-cocycle condition.
In Figure~\ref{Diff30}, the term $\delta^3_{\rm BC:YI}(\Xi^{IY}_{2} - \Omega^{IY}_{2} - \phi \psi)$
is represented.

In each of square the frames, on the left-hand side, a tree diagram that appears in the 3-cocycle condition in Figure~\ref{BCdiff3} is depicted. On the right-hand side, all possible ways to place a pair of white vertices (representing 2-cocycles) are listed. There are two types of these diagrams with two white vertices; those that appear in 
$\delta^3_{\rm BC:YI}(\Xi^{IY}_{2} - \Omega^{IY}_{2} - \phi \psi)$ and those that do not. 
Those that do not appear are listed in pairs with opposite signs, below dotted horizontal lines in a square frame, if there are any.
Hence the sum of the maps represented by these tree diagrams is equal to $\delta^3_{\rm BC:YI}(\Xi^{IY}_{2} - \Omega^{IY}_{2} - \phi \psi)$.

The labels in each term in Figure~\ref{Diff30} correspond to those in Figure~\ref{BC2cocylist}, and the total sum vanishes.
Those labeled by $Z_i$ cancel with opposite signs.
The IY case is similar.
This shows that $\delta^3_{\rm BC}(\Xi^{IY}_{2} - \Omega^{IY}_{2} - \phi \psi)=0$.

The obstruction to quadratic deformation for a braided algebra lies in $H^3_{\rm YBH}(X,X)$, as an application of Lemma~\ref{lem:3cocy}. From the previous part of the proof of this theorem, $\delta^3_{\rm BC:YI}$ and $\delta^3_{\rm BC:IY}$ map $(\Xi_2 - \Omega_2)\oplus \Theta_2\oplus\Upsilon_2$ (i.e. the braided commutative part of the obstruction) to zero. Since $\delta^3_{\rm BC} = \delta^3_{\rm YBH} \oplus \delta^3_{\rm BC:YI} \oplus \delta^3_{\rm BC:IY}$, and making use of the fact that all the $\delta^2$ terms appearing in Lemma~\ref{lem:higher_BC} are mapped to zero by $\delta^3_{\rm BC}$, it follows that the obstruction to extending an infinitesimal braided commutative deformation to a quadratic braided commutative deformation lies in $H^3_{\rm BC}(X,X)$.
\end{proof}

Thus we obtain the following  useful criterion for the extension of infinitesimal deformations to quadratic deformations.

\begin{corollary}
	If $H_{\rm BC}^3(V,V) = 0$, then any infinitesimal deformation can be extended to a quadratic deformation.
\end{corollary}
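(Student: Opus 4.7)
The plan is to deduce this corollary as a direct consequence of Theorem~\ref{thm:higher_BC} together with Lemma~\ref{lem:higher_BC}. Start from a BC infinitesimal deformation $(\tilde V, \tilde \mu, \tilde R)$ with $\tilde \mu = \mu + \hbar \psi_1$ and $\tilde R = R + \hbar \phi_1$; by Lemma~\ref{lem:braid_comm_deform} the pair $(\phi_1, \psi_1)$ lies in $Z^2_{\rm BC}(V,V)$. The goal is to produce $(\phi_2, \psi_2) \in C^2_{\rm BC}(V,V)$ such that $\mu + \hbar \psi_1 + \hbar^2 \psi_2$ and $R + \hbar \phi_1 + \hbar^2 \phi_2$ define a quadratic BC deformation.

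First I would apply Lemma~\ref{lem:higher_BC} with $n = 1$: the desired extension exists if and only if $(\phi_2, \psi_2)$ can be chosen so that the five listed equations hold. Using the direct-sum decomposition
\[
C^3_{\rm BC}(V,V) \;=\; C^3_{\rm YB}(V,V) \oplus C^{3,2}_{\rm IY}(V,V) \oplus C^{3,2}_{\rm YI}(V,V) \oplus C^3_{\rm H}(V,V) \oplus C^{2,1}_{\rm BC}(V,V),
\]
I would package the right-hand sides $-\Theta_2$, $-(\Xi^{IY}_2 - \Omega^{IY}_2)$, $-(\Xi^{YI}_2 - \Omega^{YI}_2)$, $-\Lambda_2$, and $-\Upsilon_2$ into a single element $\Omega \in C^3_{\rm BC}(V,V)$. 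Then the five conditions of Lemma~\ref{lem:higher_BC} collapse into the single coboundary equation $\delta^2_{\rm BC}(\phi_2 \oplus \psi_2) = \Omega$, and solvability of this equation is equivalent to $\Omega$ being a coboundary.

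The second step is to verify that $\Omega$ is a BC 3-cocycle. For the YBH summands this is furnished by Lemma~\ref{lem:3cocy}, and for the BC:YI and BC:IY summands involving $\Upsilon_2 = \psi_1 \phi_1$ this is precisely the content of Theorem~\ref{thm:higher_BC}. Since $\delta^3_{\rm BC} = \delta^3_{\rm YBH} \oplus \delta^3_{\rm BC:YI} \oplus \delta^3_{\rm BC:IY}$ splits along the direct summands, these two facts together give $\delta^3_{\rm BC}(\Omega) = 0$. Invoking the hypothesis $H^3_{\rm BC}(V,V) = 0$ produces the desired $(\phi_2, \psi_2)$, and the corollary follows. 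I do not anticipate any serious obstacle: the essential cohomological computations have already been performed in Theorem~\ref{thm:higher_BC}, and what remains is bookkeeping to ensure that the direct-summand partition of $C^3_{\rm BC}$ is matched correctly with the obstruction terms of Lemma~\ref{lem:higher_BC}.
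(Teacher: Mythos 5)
Your proposal is correct and follows essentially the same route as the paper, which simply cites Theorem~\ref{thm:higher_BC} to place the obstruction in $H^3_{\rm BC}(V,V)$ and concludes from its vanishing; your version merely makes explicit the bookkeeping (via Lemma~\ref{lem:higher_BC} and the coboundary equation $\delta^2_{\rm BC}(\phi_2\oplus\psi_2)=\Omega$) that the paper leaves implicit.
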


\begin{proof}
			From Theorem~\ref{thm:higher_BC} we know that the obstruction to quadratic braided commutative deformations lies in $H^3_{\rm BC}(X,X)$. When $H^3_{\rm BC}(X,X) = 0$, any obstruction vanishes and any infinitesimal deformation can be integrated to a quadratic deformation.
\end{proof}

\begin{figure}[htb]
	\begin{center}
		\includegraphics[width=5.5in]{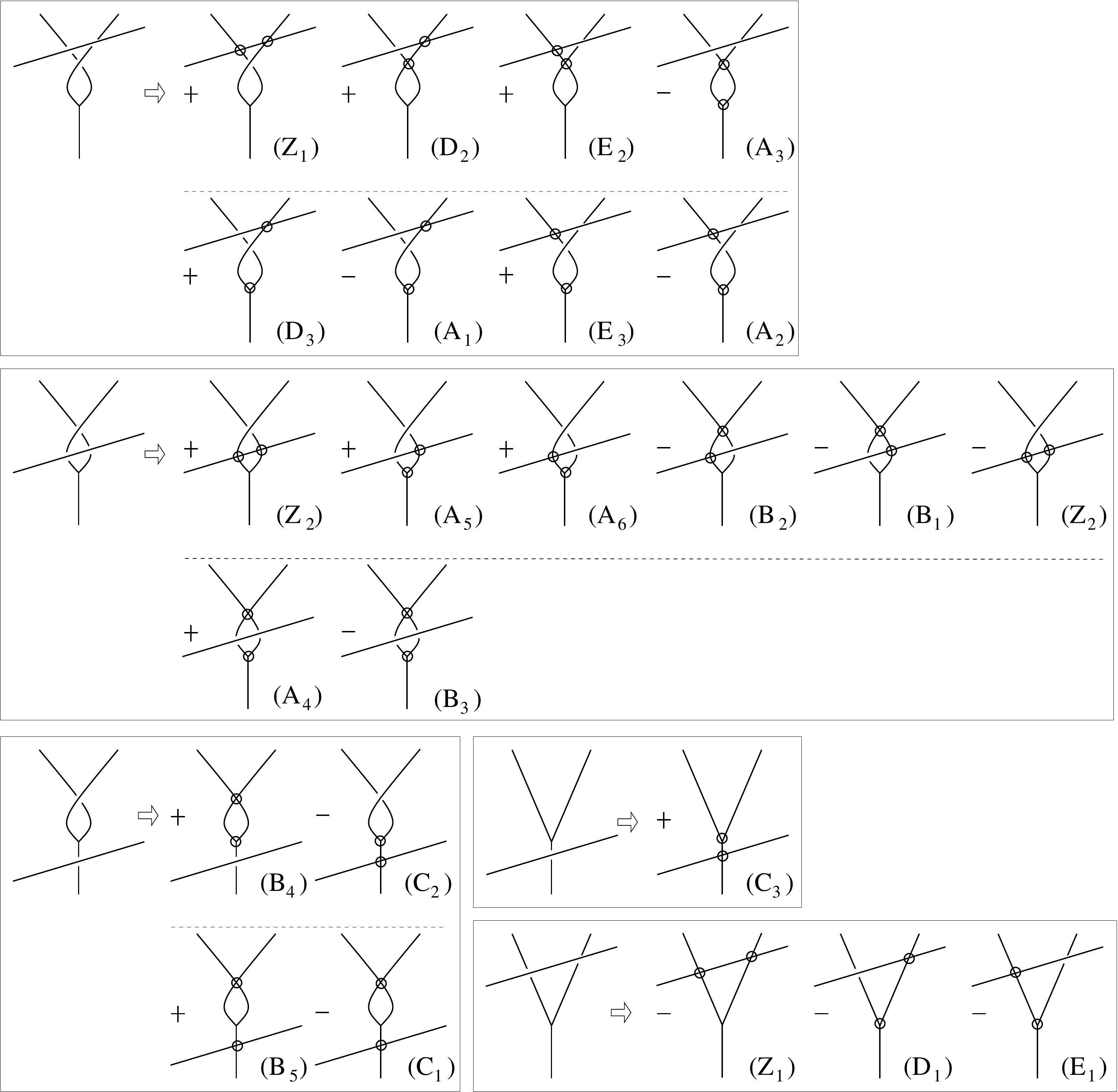}
	\end{center}
	\caption{}
	\label{Diff30}
\end{figure}

	\begin{figure}[htb]
					\begin{center}
						\includegraphics[width=3in]{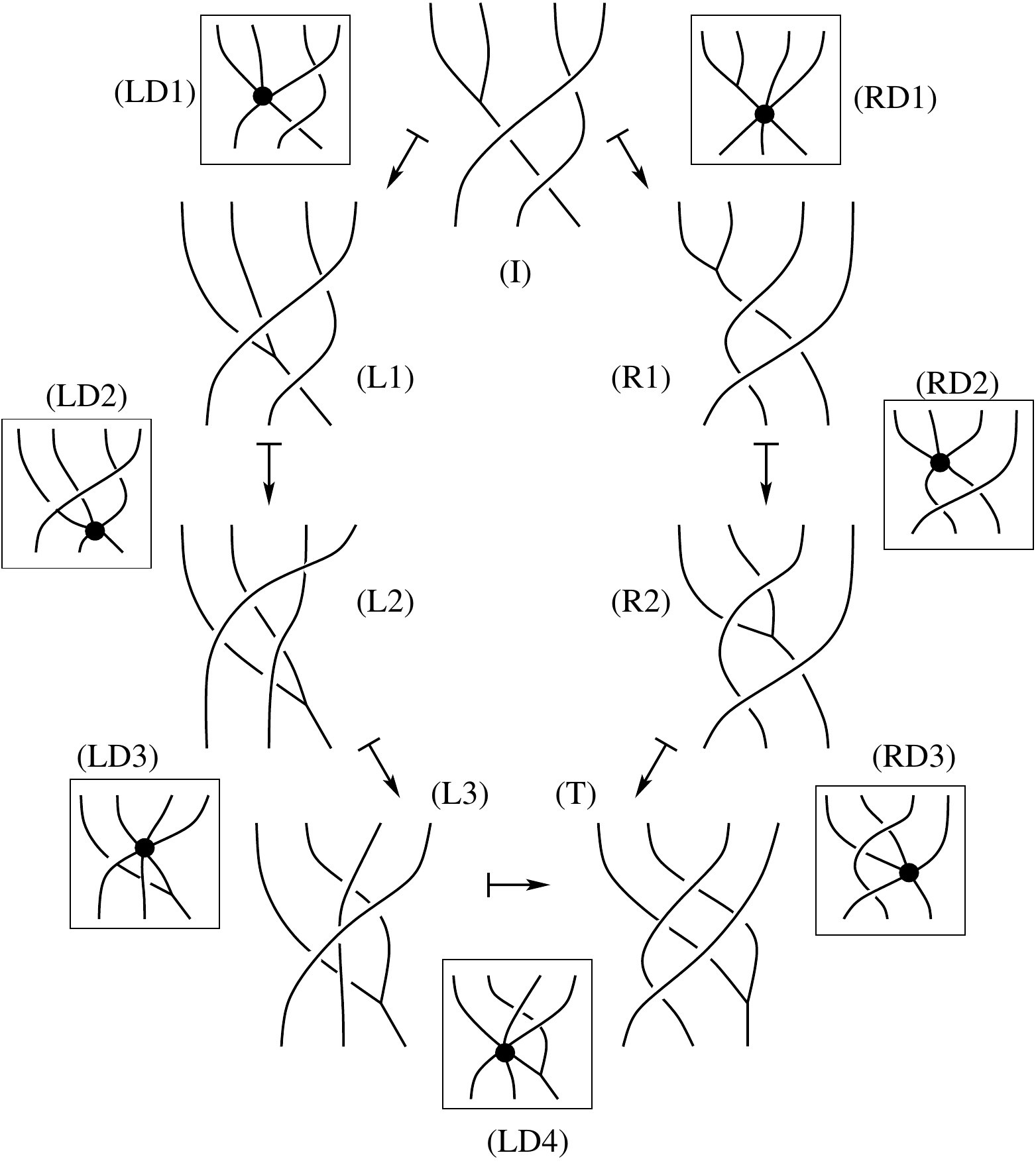}
					\end{center}
					\caption{}
					\label{YII}
				\end{figure}
				
	Next we extend the preceding result to obstructions of higher dimensions by utilizing graph diagrams 
	 in the context of 
		 the calculus of deviations by Markl and Stasheff \cite{deviations}. 
			In the following, we distinguish the result for YBH cohomology, and BC cohomology, as they are both interesting in their own right. 
		
		\begin{theorem}\label{thm:higher_YBH_deg_n}
			Let $\bar \psi = \sum_{i=0}^n \hbar^i \psi_i$ and $\bar \phi =  \sum_{i=0}^n \hbar^i \phi_i$ be a degree $n$ deformation of $\mu = \psi_0$ and $R = \phi_0$. Let $\psi_{n+1}$, $\phi_{n+1}$, $\Theta_{n+1}$, $\Xi^{\bullet}_{n+1}$, $\Omega^{\bullet}_{n+1}$, $\Lambda^{\bullet}_{n+1}$ be as in Lemma~\ref{lem:higher_BC}. Then, we have 
			\begin{eqnarray*}
					\delta^3_{\rm YBH}(\Theta_{n+1}\oplus(\Xi^{\rm IY}_{n+1}-\Omega^{\rm IY}_{n+1})\oplus(\Xi^{\rm YI}_{n+1}-\Omega^{\rm YI}_{n+1})\oplus\Lambda_{n+1})  = 0.
			\end{eqnarray*}
			In other words, the obstruction to extending a degree $n$ braided algebra deformation to a degree $n+1$ deformation lies in the third YBH cohomology group. 
		\end{theorem}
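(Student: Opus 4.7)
The plan is to extend the quadratic argument of Lemma~\ref{lem:3cocy} to all orders via the calculus of deviations. Since $\delta^3_{\rm YBH}$ respects the direct sum decomposition
$$C^3_{\rm YBH}(V,V) = C^{3,3}_{\rm YBH}(V,V)\oplus C^{3,2}_{\rm YI}(V,V)\oplus C^{3,2}_{\rm IY}(V,V)\oplus C^{3,1}_{\rm YBH}(V,V),$$
the theorem reduces to four separate identities: the purely Hochschild statement $\delta^3_{\rm H}(\Lambda_{n+1})=0$, the purely Yang--Baxter statement $\delta^3_{\rm YB}(\Theta_{n+1})=0$, and the two mixed YI and IY statements, each of which combines contributions from several summands of the input tuple. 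I would handle the pure cases first, since these extend the classical Gerstenhaber argument for Hochschild deformations and its Yang--Baxter analogue verbatim.

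The first step is to interpret each obstruction as the degree-$(n+1)$ Taylor coefficient of a coherence defect for the partial deformation $(\bar R, \bar\mu)$. Writing the associator $A_{\rm H}(\bar\mu)= \bar\mu(\bar\mu\otimes\mathbb 1)-\bar\mu(\mathbb 1\otimes\bar\mu)$, and similarly the YBE defect $A_{\rm YB}(\bar R)$ and the YI and IY defects (as polynomials in $\hbar$ with values in $\Hom(V^{\otimes 3}, V^{\otimes k})$), the hypothesis that $(\bar R,\bar\mu)$ is a degree $n$ deformation amounts to the vanishing of each of these four defects modulo $\hbar^{n+1}$. By construction, $\Lambda_{n+1}$, $\Theta_{n+1}$, and $\Xi^\bullet_{n+1}-\Omega^\bullet_{n+1}$ are precisely the degree-$(n+1)$ coefficients of these defects.

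The second step invokes a higher coherence identity that holds a priori, before any axioms are imposed. For Hochschild, the associator $A_{\rm H}(\bar\mu)$ of any bilinear map satisfies a pentagon-type identity in which it is composed on either side with $\bar\mu$, and on each of four arguments with itself. Taking the degree-$(n+2)$ Taylor coefficient of this identity and using that $A_{\rm H}(\bar\mu)\equiv 0\pmod{\hbar^{n+1}}$, every term in which the associator appears with a factor of degree at most $n$ vanishes; what survives is exactly $\delta^3_{\rm H}(\Lambda_{n+1})=0$. Analogous higher coherence identities for the YBE on four strands (of Zamolodchikov type) and for the YI and IY relations applied with one additional multiplication produce the remaining three identities. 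These identities are most transparently read off from the 4-strand graph diagrams in the spirit of Figures~\ref{BCdiff2diff3} and \ref{Diff30}.

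The main obstacle will be the bookkeeping in the mixed YI and IY cases, where the higher coherence identity couples $\bar\mu$ and $\bar R$, and multiple summands of the input tuple (including $\Theta_{n+1}$, $\Lambda_{n+1}$, and the mixed $\Xi-\Omega$ terms) contribute to a single target component of $\delta^3_{\rm YBH}$. One must verify that the correct combinatorial pairing, generalizing the diagrammatic argument used in the proofs of Lemma~\ref{lem:3cocy} and Theorem~\ref{thm:higher_BC}, assembles the contributions correctly, and that the inductive vanishing supplied by the degree-$\leq n$ relations of Lemma~\ref{lem:higher} cancels all remaining cross terms in pairs. This is the core verification, and it should proceed by a careful but essentially routine pattern match to the diagrammatic computations already carried out in the quadratic case.
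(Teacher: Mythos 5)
Your proposal follows essentially the same route as the paper: the paper also reduces to the pure Hochschild and Yang--Baxter components plus the mixed YI/IY components, interprets $\Lambda_{n+1}$, $\Theta_{n+1}$, $\Xi^\bullet_{n+1}-\Omega^\bullet_{n+1}$ as the leading ($\hbar^{n+1}$) coefficients of the coherence defects of the truncated deformation, and obtains your ``higher coherence identity'' concretely via the Markl--Stasheff calculus of deviations, gluing the commutes-up-to-deviation squares read off from the $4$-strand diagrams (Figure~\ref{YII} and its companions) into a sphere so that the deviations sum to zero. The only quibble is a harmless off-by-one in your indexing (the surviving coefficient of the coherence identity is the degree-$(n+1)$ one, not degree-$(n+2)$), which does not affect the argument.
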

		\begin{proof}
				We start by showing the result for the YBH component of the braided commutative third differential, and then consider the braided commutative constraint. The differential $\delta^3_{\rm BC}$ contains a direct summand that coincides with the Hochschild differential, for which the result is known. Similarly, the YB component of $\delta^3_{\rm BC}$ also satisfies that $\delta^3_{\rm YB}(\Theta_{n+1}) = 0$. We need to verify the claim for the YI and IY conditions. Figure~\ref{YII} shows the diagrammatic procedure that was used in \cite{SZ-YBH} to introduce the third differential associated to the YI condition, which was called $\delta^{3,1}_{\rm YI}$ in \cite{SZ-YBH}. We want to show that $\delta^{3,1}_{\rm YI}(\Theta_{n+1}\oplus (\Xi^{\rm YI}_{n+1}-\Omega^{\rm YI}_{n+1})) = 0$.
			
				Let us consider the first arrow on the top left, which allows to pass from (I) to (L1). We replace the braiding by the deformed operator $\hat \phi = \bar \phi + \hbar^{n+1}\phi_{n+1}$, and the multiplication by the deformed operator $\hat \psi = \bar \psi + \hbar^{n+1}\psi_{n+1}$. The configuration (I) corresponds to the homomorphism
				\begin{eqnarray}\label{eqn:I_cond}
						f_1 := (\mathbb 1 \otimes \hat \phi)(\hat \phi \otimes \mathbb 1)(\hat \psi\otimes \hat \phi),
				\end{eqnarray} 
				while configuration (L1) corresponds to
				\begin{eqnarray}\label{eqn:L1_cond}
						f_2 := (\mathbb 1 \otimes  \hat \phi)(\mathbb 1\otimes \hat \psi\otimes \mathbb 1)(\hat \phi \otimes \mathbb 1^{\otimes 2})(\mathbb 1\otimes \hat \phi \otimes \mathbb 1)(\mathbb 1^{\otimes 2} \otimes \hat \phi). 
				\end{eqnarray}
				In both equations, we assume that each string corresponds to a copy of $V_\hbar := \mathbb k[[\hbar]] \otimes V$.
				Since Equations~\eqref{eqn:I_cond} and \eqref{eqn:L1_cond} differ by an application of the YI condition, which holds up to degree $n$ by assumption, we have that the diagram
				\begin{eqnarray*}
				\begin{tikzcd}
						V_\hbar \otimes V_\hbar \otimes V_\hbar \otimes V_\hbar\arrow[rrr,"\mathbb 1^{\otimes 4}"]\arrow[dd,"f_1"] & & & V_\hbar \otimes V_\hbar \otimes V_\hbar\otimes V_\hbar\arrow[dd,"f_2"]\\
						&&&\\
						V_\hbar \otimes V_\hbar \otimes V_\hbar\arrow[rrr,"\mathbb 1^{\otimes 3}"] & & & V_\hbar \otimes V_\hbar \otimes V_\hbar
				\end{tikzcd}
				\end{eqnarray*}
				commutes up to degree $n+1$ in $\hbar$, i.e. modulo the ideal $(\hbar^{n+1})$. In the language of Markl and Stasheff, we have a deviation $\hbar^{n+1}\alpha = f_1 - f_2$. By Lemma~\ref{lem:higher}, we have 
				\begin{eqnarray}
						\alpha = (\mathbb 1\otimes \phi_0)((\Xi^{\rm YI}_{n+1}-\Omega^{\rm YI}_{n+1})\otimes \mathbb 1)(\mathbb 1^{\otimes 2}\otimes \phi_0),
				\end{eqnarray} 
				which is called the deviation. Observe that $\alpha$ is diagrammatically represented by the diagram on the top-left arrow in Figure~\ref{YII}, where the black dot represents $\Xi^{\rm YI}_{n+1}-\Omega^{\rm YI}_{n+1}$. Graphically, following the convention of \cite{deviations}, we have
				\begin{eqnarray}\label{diag:deviation_YI}
					\begin{tikzcd}
						V_\hbar \otimes V_\hbar \otimes V_\hbar \otimes V_\hbar\arrow[rrr,"\mathbb 1^{\otimes 4}"]\arrow[dd,"f_1"] & & {}\arrow[dr,"\alpha",swap] & V_\hbar \otimes V_\hbar \otimes V_\hbar\otimes V_\hbar\arrow[dd,"f_2"]\\
						&&&{}\\
						V_\hbar \otimes V_\hbar \otimes V_\hbar\arrow[rrr,"\mathbb 1^{\otimes 3}"] & & & V_\hbar \otimes V_\hbar \otimes V_\hbar.
					\end{tikzcd}
				\end{eqnarray}
				Proceeding similarly along the perimeter of Figure~\ref{YII} we obtain several diagrams of this type with the deviations obtained from anticlockwise arrows called $\alpha_1, \alpha_2, \alpha_3, \alpha_4$. We call the deviations that run clockwise $\beta_1, \beta_2, \beta_3$. Here we follow the orientation convention of \cite{deviations}, and the $\beta_i$ are therefore considered with negative signs in the argument below. For each deviation, the black dot vertex with three inputs and three outputs represents $\Theta_{n+1}$, while the one with three inputs and two outputs represents $\Xi^{\rm YI}_{n+1}-\Omega^{\rm YI}_{n+1}$. By patching together the diagrams like~\eqref{diag:deviation_YI} along their common edges, we obtain a closed surface homeomorphic to a sphere, as in Figure~\ref{fig:Markl-Stasheff}. 
				\begin{figure}[htb]
					\begin{center}
						\includegraphics[width=2in]{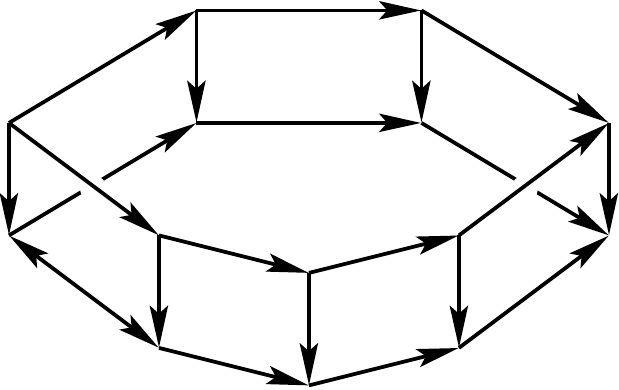}
					\end{center}
					\caption{}
					\label{fig:Markl-Stasheff}
				\end{figure}
				Therefore, the standard argument of Markl-Stasheff in \cite{deviations} gives that $\alpha_1 + \alpha_2 + \alpha_3 + \alpha_4 - \beta_1 - \beta_2 - \beta_3 = 0$. However, we notice that each of $\alpha_i$ and $\beta_j$ corresponds to one of the summands of $\delta^{3,1}_{\rm YI}$. We have therefore proved that $\delta^{3,1}_{\rm YI}(\Theta_{n+1}\oplus (\Xi^{\rm YI}_{n+1}-\Omega^{\rm YI}_{n+1})) = 0$. A simlar procedure, by means of Figure~13 and Figure~14 in \cite{SZ-YBH}, shows that also $\delta^{3,2}_{\rm YI}((\Xi^{\rm YI}_{n+1}-\Omega^{\rm YI}_{n+1})\oplus (\Xi^{\rm IY}_{n+1}-\Omega^{\rm IY}_{n+1})) = 0$ and $\delta^{3,3}((\Xi^{\rm YI}_{n+1}-\Omega^{\rm YI}_{n+1})\oplus \Lambda_{n+1}) = 0$. Therefore, $\delta^3_{\rm YI}(\Theta_{n+1}\oplus(\Xi^{\rm YI}_{n+1}-\Omega^{\rm YI}_{n+1})\oplus(\Xi^{\rm IY}_{n+1}-\Omega^{\rm IY}_{n+1})\oplus\Lambda_{n+1}) = 0$. Similarly, we obtain that  $\delta^3_{\rm IY}(\Theta_{n+1}\oplus(\Xi^{\rm IY}_{n+1}-\Omega^{\rm IY}_{n+1})\oplus(\Xi^{\rm YI}_{n+1}-\Omega^{\rm YI}_{n+1})\oplus\Lambda_{n+1}) = 0$. This shows that 
				\begin{eqnarray*}
						\delta^3_{\rm YBH}(\Theta_{n+1}\oplus(\Xi^{\rm IY}_{n+1}-\Omega^{\rm IY}_{n+1})\oplus(\Xi^{\rm YI}_{n+1}-\Omega^{\rm YI}_{n+1})\oplus\Lambda_{n+1}) = 0,
				\end{eqnarray*}
				which completes the proof.
		\end{proof}
	
		\begin{corollary}
			If $H^3_{\rm YBH}(V,V) = 0$, then any braided infinitesimal deformation of $V$ can be extended to a full deformation of arbitrary degree. 
		\end{corollary}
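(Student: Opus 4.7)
The plan is to proceed by induction on the degree of the deformation, the base case being the hypothesized infinitesimal (degree~$1$) deformation. The bridge from degree $n$ to degree $n+1$ is provided by combining Lemma~\ref{lem:higher} and Theorem~\ref{thm:higher_YBH_deg_n}, and essentially nothing else is needed.

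Concretely, assume inductively that a degree~$n$ braided algebra deformation $(R_n, \mu_n)$ of $(R, \mu)$ has been constructed. Form the obstruction element
\[
\omega_{n+1} \;:=\; \Theta_{n+1}\oplus(\Xi^{\rm IY}_{n+1}-\Omega^{\rm IY}_{n+1})\oplus(\Xi^{\rm YI}_{n+1}-\Omega^{\rm YI}_{n+1})\oplus\Lambda_{n+1} \;\in\; C^3_{\rm YBH}(V,V),
\]
which depends only on the already-constructed data $(R_n,\mu_n)$. Lemma~\ref{lem:higher} says that an extension $(R_{n+1}, \mu_{n+1}) = (R_n + \hbar^{n+1}\phi_{n+1},\; \mu_n + \hbar^{n+1}\psi_{n+1})$ is a degree~$n+1$ braided algebra deformation if and only if the four equations listed there are satisfied, which — after rearranging signs summand by summand — is precisely the condition $-\omega_{n+1} = \delta^2_{\rm YBH}(\phi_{n+1}\oplus \psi_{n+1})$ in the decomposition of $C^3_{\rm YBH}(V,V)$.

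The plan is then to apply Theorem~\ref{thm:higher_YBH_deg_n}, which asserts exactly that $\omega_{n+1}$ is a $3$-cocycle. Since $H^3_{\rm YBH}(V,V) = 0$ by hypothesis, this cocycle is a coboundary, so there exists a $2$-cochain $(\phi_{n+1}, \psi_{n+1})$ realizing the required extension. This closes the induction. The resulting coherent sequence $\{(R_n,\mu_n)\}_{n\ge 1}$, in which each pair agrees with its predecessor modulo $\hbar^{n+1}$, assembles via $\hbar$-adic completeness of $\mathbb k[[\hbar]]$ into a formal power series deformation $(R_\infty, \mu_\infty)$ satisfying the braided algebra axioms to all orders.

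There is no real obstacle of its own here; all of the genuinely substantive work has been absorbed into the preceding theorem, and the corollary is a bookkeeping step that converts a cocycle-vanishing result into an inductive extension procedure. The one point worth flagging is that the cocycle property must hold for the full element $\omega_{n+1}$ in $C^3_{\rm YBH}(V,V)$ — not merely in each direct summand separately — because only then does the vanishing of $H^3_{\rm YBH}(V,V)$ furnish a single consistent pair $(\phi_{n+1}, \psi_{n+1})$ solving all four equations of Lemma~\ref{lem:higher} simultaneously; this coherence is precisely what Theorem~\ref{thm:higher_YBH_deg_n} delivers.
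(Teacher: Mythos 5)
Your proposal is correct and follows exactly the argument the paper intends: Lemma~\ref{lem:higher} identifies the extension condition as $\delta^2_{\rm YBH}(\phi_{n+1}\oplus\psi_{n+1})=-\omega_{n+1}$, Theorem~\ref{thm:higher_YBH_deg_n} shows $\omega_{n+1}$ is a $3$-cocycle, and the vanishing of $H^3_{\rm YBH}(V,V)$ lets you solve for $(\phi_{n+1},\psi_{n+1})$ at each stage of the induction. Your closing remark about needing the cocycle condition for the full element $\omega_{n+1}$ rather than summand by summand is exactly the right point to flag, and the assembly of the tower into a formal deformation by $\hbar$-adic completeness is standard.
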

	
		\begin{theorem}\label{thm:higher_BC_deg_n}
					Let $\bar \psi = \sum_{i=0}^n \hbar^i \psi_i$ and $\bar \phi =  \sum_{i=0}^n \hbar^i \phi_i$ be a degree $n$ deformation of $\mu = \psi_0$ and $R = \phi_0$. Let $\psi_{n+1}$, $\phi_{n+1}$, $\Theta_{n+1}$, $\Xi^{\bullet}_{n+1}$, $\Omega^{\bullet}_{n+1}$, $\Lambda^{\bullet}_{n+1}$ and $\Upsilon_{n+1}$ be as in Lemma~\ref{lem:higher_BC}. Then, we have 
					\begin{eqnarray*}
						\delta^3_{\rm BC}(\Theta_{n+1}\oplus(\Xi^{\rm IY}_{n+1}-\Omega^{\rm IY}_{n+1})\oplus(\Xi^{\rm YI}_{n+1}-\Omega^{\rm YI}_{n+1})\oplus\Lambda_{n+1} \oplus \Upsilon_{n+1})  = 0.
					\end{eqnarray*}
					In other words, the obstruction to extending a degree $n$ braided commutative algebra deformation to a degree $n+1$ deformation lies in the third BC cohomology group.
		\end{theorem}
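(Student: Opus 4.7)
The plan is to exploit the direct sum decomposition $\delta^3_{\rm BC}=\delta^3_{\rm YBH}\oplus\delta^3_{\rm BC:YI}\oplus\delta^3_{\rm BC:IY}$ recorded in Definition~\ref{def:d3}. Theorem~\ref{thm:higher_YBH_deg_n} already furnishes the vanishing of $\delta^3_{\rm YBH}$ applied to $\Theta_{n+1}\oplus(\Xi^{\rm IY}_{n+1}-\Omega^{\rm IY}_{n+1})\oplus(\Xi^{\rm YI}_{n+1}-\Omega^{\rm YI}_{n+1})\oplus\Lambda_{n+1}$, since the YBH component is insensitive to the new braided commutative summand $\Upsilon_{n+1}$. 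It therefore suffices to prove
$$ \delta^3_{\rm BC:YI}\bigl((\Xi^{\rm YI}_{n+1}-\Omega^{\rm YI}_{n+1})\oplus\Theta_{n+1}\oplus\Upsilon_{n+1}\bigr)=0 \qquad \text{and}\qquad \delta^3_{\rm BC:IY}\bigl((\Xi^{\rm IY}_{n+1}-\Omega^{\rm IY}_{n+1})\oplus\Theta_{n+1}\oplus\Upsilon_{n+1}\bigr)=0, $$
in direct generalization of the quadratic base case already handled in Theorem~\ref{thm:higher_BC}.

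For each of these two identities I would adapt the Markl--Stasheff deviation calculus used in the proof of Theorem~\ref{thm:higher_YBH_deg_n}. Throughout Figure~\ref{BCdiff3} replace $\mu$ and $R$ by the deformed operators $\hat\mu=\bar\psi+\hbar^{n+1}\psi_{n+1}$ and $\hat R=\bar\phi+\hbar^{n+1}\phi_{n+1}$. Each elementary move along the perimeter of that diagrammatic cycle is either a YI move, a YBE move, or the braided commutativity move $\hat\mu\hat R=\hat\mu$. By assumption each axiom holds modulo $\hbar^{n+1}$, so each move contributes a deviation whose coefficient of $\hbar^{n+1}$ is, by Lemma~\ref{lem:higher_BC}, precisely $\Xi^{\rm YI}_{n+1}-\Omega^{\rm YI}_{n+1}$, $\Theta_{n+1}$, or $\Upsilon_{n+1}$, framed by exactly the tensor factors of $\mathbb 1$, $R$, and $\mu$ that appear in the corresponding summand of $\delta^3_{\rm BC:YI}$ in Definition~\ref{def:d3}. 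Gluing these deviation squares along their shared boundary edges produces a closed oriented $2$-surface, and the Markl--Stasheff principle then forces the signed sum of the five contributions to vanish. That signed sum is exactly $\delta^3_{\rm BC:YI}$ applied to the above tuple, and the argument for $\delta^3_{\rm BC:IY}$ is dual, replacing the YI move by the IY move.

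The main obstacle will be certifying that this assembly of deviations really closes up to a sphere once the braided commutativity face is inserted. In the pure YBH proof the closure was read off from Figure~\ref{YII}, whose boundary followed from the braided algebra axioms alone. Here the cycle in Figure~\ref{BCdiff3} contains an extra $2$-cell corresponding to $\mu R=\mu$, and one must verify the resulting $2$-complex remains homeomorphic to $S^2$. The quadratic case worked out via the bookkeeping of Figures~\ref{BC2cocylist} and~\ref{Diff30} already furnishes precisely this verification: the pairwise cancellation recorded there by the paired labels is a concrete incarnation of the sphere-closure. Since the combinatorics of the elementary moves, the valences of the vertices in the cycle, and the signed adjacencies depend only on the axioms of a braided algebra together with the braided commutativity relation, and not on the deformation order, the same labeling pattern extends uniformly to all $n$, with the white $2$-cocycle vertices of Figure~\ref{Diff30} replaced by black vertices carrying the respective degree-$(n+1)$ obstructions $\Xi^{\rm YI}_{n+1}-\Omega^{\rm YI}_{n+1}$, $\Theta_{n+1}$, and $\Upsilon_{n+1}$. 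Once sphere-closure is confirmed, combining the three vanishing statements for $\delta^3_{\rm YBH}$, $\delta^3_{\rm BC:YI}$, and $\delta^3_{\rm BC:IY}$ on the obstruction tuple yields the stated identity.
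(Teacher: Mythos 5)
Your proposal is correct and follows essentially the same route as the paper: reduce via the decomposition $\delta^3_{\rm BC}=\delta^3_{\rm YBH}\oplus\delta^3_{\rm BC:YI}\oplus\delta^3_{\rm BC:IY}$, invoke Theorem~\ref{thm:higher_YBH_deg_n} for the YBH summand, and handle the two braided commutative summands by computing the Markl--Stasheff deviations of the arrows in Figure~\ref{BCdiff3} (using Lemma~\ref{lem:higher_BC}) and gluing the resulting diagrams so that the signed sum of deviations vanishes. Your added attention to verifying sphere-closure of the glued $2$-complex, anchored in the quadratic bookkeeping of Figures~\ref{BC2cocylist} and~\ref{Diff30}, is a point the paper passes over more quickly but does not change the argument.
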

		\begin{proof}
				Since the YBH part was already handled in Theorem~\ref{thm:higher_YBH_deg_n}, to complete the proof we just need to show that 
				\begin{eqnarray}\label{BCeq1}
					\delta^3_{\rm BC:YI}(\Theta_{n+1}\oplus(\Xi^{\rm IY}_{n+1}-\Omega^{\rm IY}_{n+1})\oplus(\Xi^{\rm YI}_{n+1}-\Omega^{\rm YI}_{n+1})\oplus\Lambda_{n+1}\oplus \Upsilon_{n+1}) = 0,
				\end{eqnarray}
				and
				\begin{eqnarray}\label{BCeq2}
					\delta^3_{\rm BC:IY}(\Theta_{n+1}\oplus(\Xi^{\rm IY}_{n+1}-\Omega^{\rm IY}_{n+1})\oplus(\Xi^{\rm YI}_{n+1}-\Omega^{\rm YI}_{n+1})\oplus\Lambda_{n+1}\oplus \Upsilon_{n+1}) = 0.
				\end{eqnarray}
				This is done following the same procedure as before. To show the first equality, we consider Figure~\ref{BCdiff3}, and using Lemma~\ref{lem:higher_BC} we compute the deviation for each of the arrows, whic are replaced by the corresponding commutative diagrams up to terms of degree $n+1$ in $\hbar$. By glueing all diagrams together, the arguments of Markl-Stasheff here give that all the deviations sum up to zero. Since each deviation corresponds to one of the summands defining $\delta^3_{\rm BC:YI}$, we obtain Equation~\ref{BCeq1}
				and 
				as required. By considering the diagrams defining $\delta^3_{\rm BC:IY}$ we also obtain
				Equation~\ref{BCeq2}, and 
				the proof is complete.
		\end{proof}
	
		\begin{corollary}
			If $H^3_{\rm BC}(V,V) = 0$, then any braided commutative infinitesimal deformation of $V$ can be extended to a full deformation of arbitrary degree. 
		\end{corollary}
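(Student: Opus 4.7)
The plan is to argue by induction on the degree $n$ of the deformation, with Theorem~\ref{thm:higher_BC_deg_n} supplying the inductive step. The base case $n=1$ is furnished by the given braided commutative infinitesimal deformation of $V$.

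For the inductive step, assume we have constructed a degree $n$ braided commutative deformation $(\bar \mu, \bar R) = (\sum_{i=0}^n \hbar^i \psi_i,\ \sum_{i=0}^n \hbar^i \phi_i)$ of $(V,\mu,R)$. To extend it to degree $n+1$ we must, by Lemma~\ref{lem:higher_BC}, produce $\psi_{n+1}$ and $\phi_{n+1}$ solving all five simultaneous equations in that lemma. I would package the inhomogeneous terms into a single element
$$
\mathcal{O}_{n+1} := \Theta_{n+1} \oplus (\Xi^{\rm IY}_{n+1} - \Omega^{\rm IY}_{n+1}) \oplus (\Xi^{\rm YI}_{n+1} - \Omega^{\rm YI}_{n+1}) \oplus \Lambda_{n+1} \oplus \Upsilon_{n+1} \in C^3_{\rm BC}(V,V),
$$
so that the five equations collapse to the single coboundary requirement $\delta^2_{\rm BC}(\phi_{n+1} \oplus \psi_{n+1}) = -\mathcal{O}_{n+1}$.

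Theorem~\ref{thm:higher_BC_deg_n} establishes that $\delta^3_{\rm BC}(\mathcal{O}_{n+1}) = 0$, so $\mathcal{O}_{n+1}$ is a genuine $3$-cocycle and determines a class in $H^3_{\rm BC}(V,V)$. The hypothesis $H^3_{\rm BC}(V,V) = 0$ forces this class to vanish, hence $\mathcal{O}_{n+1}$ is a coboundary and a suitable pair $(\phi_{n+1}, \psi_{n+1})$ exists. Setting $\mu_{n+1} = \bar \mu + \hbar^{n+1} \psi_{n+1}$ and $R_{n+1} = \bar R + \hbar^{n+1} \phi_{n+1}$ then yields the desired degree $n+1$ braided commutative deformation by Lemma~\ref{lem:higher_BC}. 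Iterating produces a deformation of arbitrary prescribed degree.

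The entire nontrivial content is absorbed into Theorem~\ref{thm:higher_BC_deg_n}, which already combines the YBH cocycle identities with the braided commutative constraint via the Markl--Stasheff deviation argument. Consequently the only thing remaining to verify is that the direct-sum decomposition of $\delta^2_{\rm BC}$ distributes cleanly into the five component equations of Lemma~\ref{lem:higher_BC}; this is immediate from the definition of $\delta^2_{\rm BC}$ as a direct sum of the corresponding differentials, so no real obstacle survives once Theorem~\ref{thm:higher_BC_deg_n} is invoked.
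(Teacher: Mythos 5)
Your proof is correct and follows essentially the same route the paper intends: the paper leaves this corollary without an explicit proof precisely because it is the standard obstruction-vanishing induction, with Theorem~\ref{thm:higher_BC_deg_n} showing the packaged obstruction $\mathcal{O}_{n+1}$ is a $3$-cocycle, $H^3_{\rm BC}(V,V)=0$ making it a coboundary, and Lemma~\ref{lem:higher_BC} converting the resulting coboundary equation into the degree $n+1$ extension. Your explicit bookkeeping of the five component equations and the sign in $\delta^2_{\rm BC}(\phi_{n+1}\oplus\psi_{n+1})=-\mathcal{O}_{n+1}$ is accurate.
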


\section{Toward Higher Dimensions}\label{sec:complex}

In this section, we explore possible approaches to extending the cohomology theory developed in this article to higher dimensions. 

\subsection{Extending Yang-Baxter cohomology to higher dimensions}

We start by giving a cochain complex for Yang-Baxter cohomology that coincides in low-dimensions with the one discussed above. 
Differential maps in these low  dimensions are defined in \cite{SZ-YBH}, and this subsection gives a generalization to higher dimensions. 

\begin{definition} {\rm 
Let ${\mathbb k}$ be a unital ring and $V$ a ${\mathbb k}$-module. 
Let $R: V^{\otimes 2} \rightarrow V^{\otimes 2}$ be a Yang-Baxter operator. 
We set $\sigma_{n,i} = \mathbb 1^{\otimes (i-1)}\otimes R \otimes \mathbb 1^{\otimes(n-i)}$. 
Define 
	$d_{{\rm YB},i}^n : {\rm Hom}(V^{\otimes n}, V^{\otimes n}) \rightarrow {\rm Hom}(V^{\otimes (n+1)}, V^{\otimes (n+1)})$
	for $\phi \in {\rm Hom}(V^{\otimes n}, V^{\otimes n})$
	by 
\begin{eqnarray*}
		d_{{\rm YB},i}^n(\phi) &=&\sigma_{n+1,i-1}\cdots\sigma_{n+1,2}\sigma_{n+1,1}(\mathbb 1\otimes \phi)\sigma_{n+1,1}\cdots\sigma_{n+1,n-i}\sigma_{n+1,n-i+1}\\
		&& - \sigma_{n+1,i}\cdots\sigma_{n+1,n-1}\sigma_{n+1,n}(\phi \otimes \mathbb 1)\sigma_{n+1,n}\cdots\sigma_{n+1,n-i+3}\sigma_{n+1,n-i+2}.
\end{eqnarray*}
Then the 
differentials for YB cohomology are defined by $d^n_{\rm YB} = \sum_{i=1}^{n+1} (-1)^{i+1}d_{{\rm YB},i}^n$.

}
\end{definition}

\begin{figure}[htb]
	\begin{center}
		\includegraphics[width=3in]{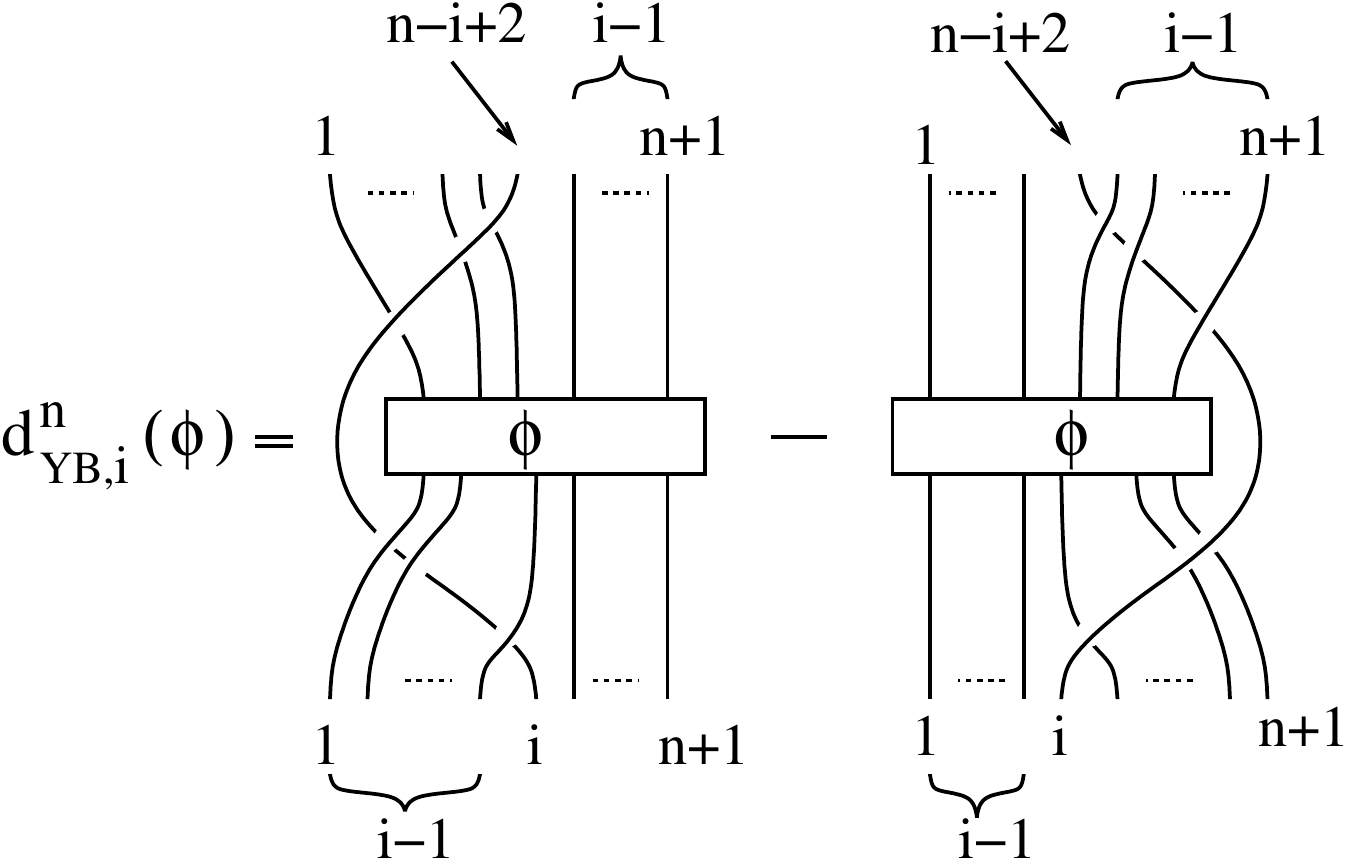}
	\end{center}
	\caption{ 
	}
	\label{fig:partial_YB}
\end{figure}

The partial differentials $d_{{\rm YB},i}^n$ are graphically depicted as in Figure~\ref{fig:partial_YB}.
We note that the positions of the  top and bottom crossings are not the same. 
This is different from the definition used, for instance, in \cite{Eisermann1}. 
An advantage of this definition
 is that no use of the inverse of $R$ is made, meaning that our cohomology can be also applied to pre-YB operators, i.e. operators that satisfy the YBE but that are not necessarily invertible.
 
\begin{figure}[htb]
	\begin{center}
		\includegraphics[width=5in]{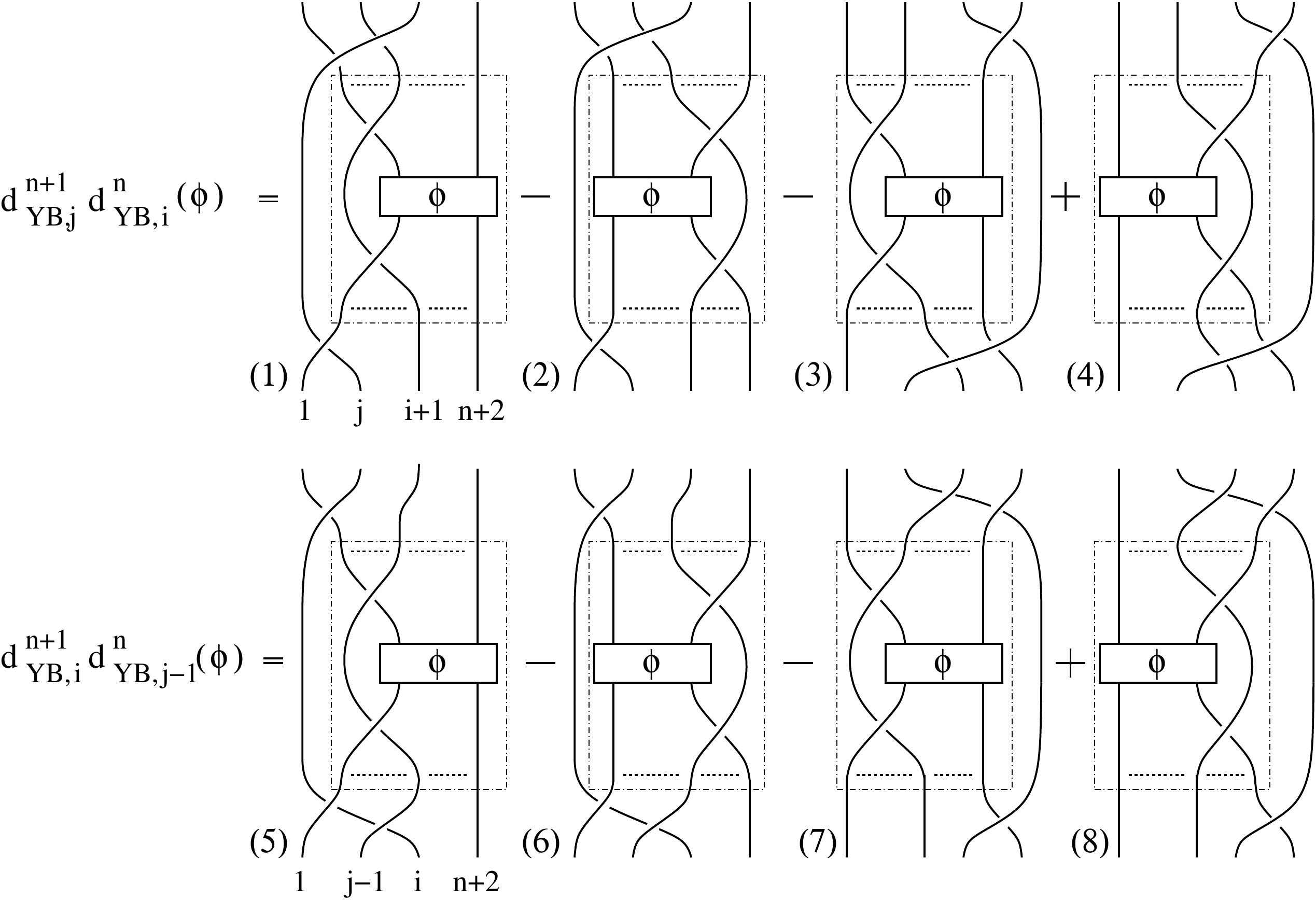}
	\end{center}
	\caption{
	}
	\label{fig:YBdjdi}
\end{figure}

 \begin{theorem}
 The differentials 
 $(  {\rm Hom}(V^{\otimes n}, V^{\otimes n}), d^n_{ {\rm YB} } )$ form a cochain complex.
 \end{theorem}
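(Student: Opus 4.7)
The plan is to verify $d_{\rm YB}^{n+1}\circ d_{\rm YB}^n=0$ by expanding both compositions as signed sums of braided expressions and then exhibiting a pairwise cancellation via elementary graph moves. Writing each partial differential as a difference $d_{{\rm YB},i}^n(\phi)=L_i^n(\phi)-R_i^n(\phi)$ corresponding to the two terms in the definition (the $(\mathbb 1\otimes\phi)$ piece and the $(\phi\otimes\mathbb 1)$ piece, together with their flanking stacks of crossings), the double composition $d_{\rm YB}^{n+1}d_{\rm YB}^n(\phi)$ becomes a signed sum of four families of terms, $L_iL_j$, $L_iR_j$, $R_iL_j$, and $R_iR_j$, each a product of braid words flanking a $(\mathbb 1^{\otimes 2}\otimes\phi)$ vertex, carrying sign $(-1)^{i+j}$.

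First I would establish cosimplicial-type identities of the form $L_i^{n+1}L_j^n=L_{j+1}^{n+1}L_i^n$ for $i\leq j$, together with the analogous equations for the $RR$, $LR$, and $RL$ families. Each such identity asserts that two explicitly given braid words sandwiching the same cochain vertex agree as elements of ${\rm Hom}(V^{\otimes(n+2)},V^{\otimes(n+2)})$, and is proved by repeated application of two elementary moves on braids: the \emph{far-commutativity} $\sigma_{m,k}\sigma_{m,\ell}=\sigma_{m,\ell}\sigma_{m,k}$ for $|k-\ell|\geq 2$, which holds because the corresponding crossings act on disjoint tensor factors, and the \emph{braid relation} $\sigma_{m,k}\sigma_{m,k+1}\sigma_{m,k}=\sigma_{m,k+1}\sigma_{m,k}\sigma_{m,k+1}$, which is exactly the Yang-Baxter equation satisfied by $R$. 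This is the diagrammatic content suggested by Figure~\ref{fig:YBdjdi}.

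Second, once the cosimplicial identities are in place, the usual reindexing argument closes the proof: in the double sum
\[
d_{\rm YB}^{n+1}d_{\rm YB}^n=\sum_{i,j}(-1)^{i+j}d_{{\rm YB},i}^{n+1}d_{{\rm YB},j}^n,
\]
each term $(i,j)$ with $i\leq j$ is matched, within each of the four families, with the corresponding term $(j+1,i)$, and the signs $(-1)^{i+j}$ and $(-1)^{(j+1)+i}$ differ by one, causing cancellation. Boundary indices where one of the flanking braid words becomes empty reduce to degenerate special cases of the same matching pattern.

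The principal obstacle is the combinatorial bookkeeping of the braid words: the flanking braids in $L_i^n$ and $R_i^n$ have lengths up to $n$ on each side, so composing two partial differentials produces braid words of length up to roughly $2n$, and verifying the identities by direct symbolic algebra would be notationally heavy. I would therefore organize the argument diagrammatically, exhibiting one representative braid equivalence (say, the $LL$ identity for interior $i<j$) in detail in the style of Figure~\ref{fig:YBdjdi}, and then indicating that the remaining three families, as well as the boundary indices, are handled by the same combination of far-commutativity and the braid relation. Since the construction uses $R$ but never $R^{-1}$, the argument applies verbatim to pre-Yang-Baxter operators as well, as observed in the preceding remark.
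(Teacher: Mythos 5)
Your proposal is correct and follows essentially the same route as the paper: both reduce to the cosimplicial identity for the partial differentials (your $L_i^{n+1}L_j^n=L_{j+1}^{n+1}L_i^n$ convention is the reindexed form of the paper's $d_{{\rm YB},j}^{n+1}d_{{\rm YB},i}^n=d_{{\rm YB},i}^{n+1}d_{{\rm YB},j-1}^n$ for $i<j$), verify it family by family ($LL$, $LR$, $RL$, $RR$) via far-commutativity of disjoint crossings together with the Yang--Baxter relation exactly as in Figure~\ref{fig:YBdjdi}, and conclude by the standard sign cancellation. No gaps.
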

 \begin{proof}
   To prove that $d^n_{\rm YB} = \sum_{i=1}^n (-1)^{i+1}d_{{\rm YB},i}^n$ gives a cochain complex, by standard arguments 
   that it suffices to show that whenever $i<j$ the equality $d_{{\rm YB},j}^{n+1}d_{{\rm YB},i}^n = d_{{\rm YB},i}^{n+1}d_{{\rm YB},j-1}^n$ holds. This is more easily obtained using the diagrammatics of Figure~\ref{fig:partial_YB}. The computation is shown in Figure~\ref{fig:YBdjdi}.
   Specifically, the terms (1) and (5), (2) and (7), (3) and (6), (4) and (8) each cancel. 
   The first and the last canceling pairs use the assumption that $R$ is a Yang-Baxter operator. Also, we notice that the strings coincide in both cases due to the numbering shifts induced by the crossings. 
   \end{proof}

A direct computation shows that in degrees 1 and 2 one obtains $\delta^1_{\rm YB}$ and $\delta^2_{\rm YB}$ as defined before. For the third differential one obtains the differential depicted diagrammatically in Figure~\ref{fig:YB3cocy}.

\begin{figure}[htb]
	\begin{center}
		\includegraphics[width=3.5in]{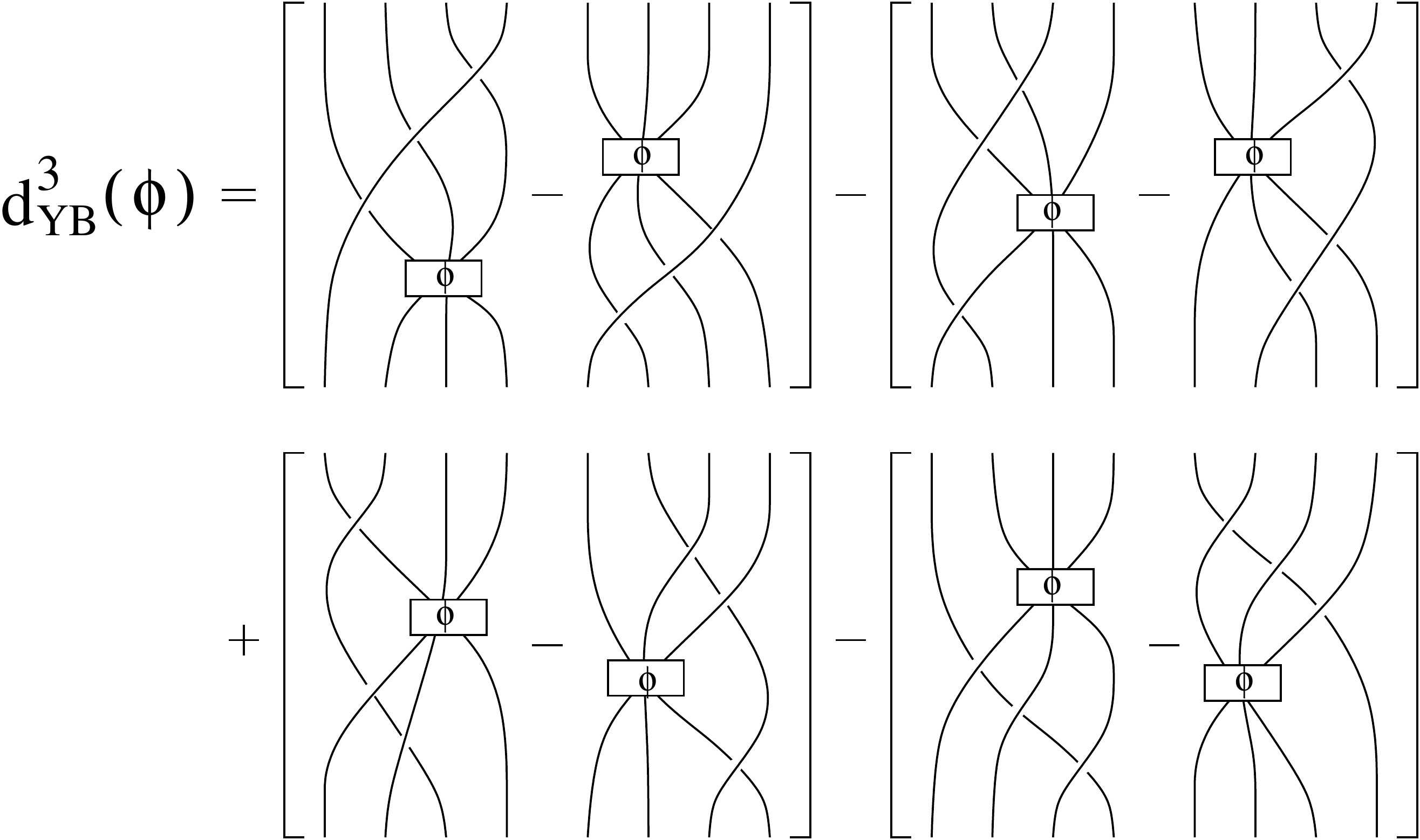}
	\end{center}
	\caption{ 
	}
	\label{fig:YB3cocy}
\end{figure}

\begin{figure}[htb]
	\begin{center}
		\includegraphics[width=3in]{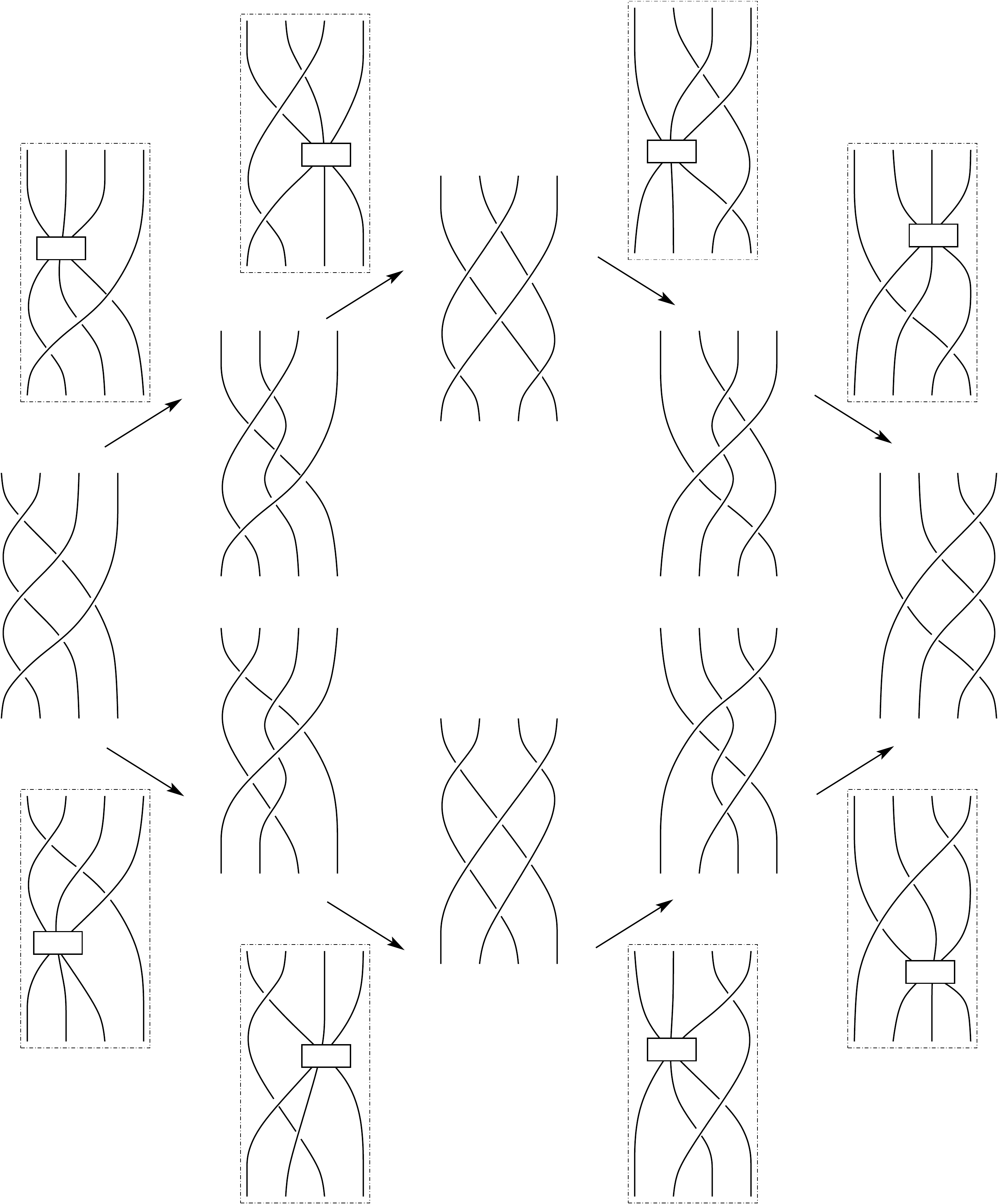}
	\end{center}
	\caption{ 
	}
	\label{fig:tetra}
\end{figure}

In Figure~\ref{fig:tetra}, at the center, two sequences (top and bottom) of Reidemeister type III moves 
are depicted, that was used in \cite{CJKLS} to define quandle cocycle invariants for knotted surfaces.
At the instance of a move, a triple point of the projection  appears. When a triple point is replaced by rectangles, we obtain
the diagrams framed and placed at arrows in the diagram. 
We observe that these diagrams appear in the YB 3-differential in Figure~\ref{fig:YB3cocy}.
This correspondence is used in the last subsection in Figure~\ref{fig:BCd41}.

\subsection{Extending Yang-Baxter Hochschild cohomology to higher dimensions}

To define a higher dimensional YBH cohomology whose low-dimensional differentials coincide with the ones studied in this article it is natural to attempt reconstructing YBH cohomology as a multicomplex \cite{Boardman,Mar-Shn,Shr}. In this situation, one has a doubly graded module $K^{s,t}$ where homomorphisms of total degree $1$ are defined, $d_r : K^{s,t} \rightarrow K^{s+r,t-r+1}$, with bidegrees $(r,1-r)$. The maps $d_r$ need their sum $\partial = \sum_r d_r : K^n \rightarrow K^{n+1}$ to satisfy $\partial^2 = 0$, where $K^n = \bigoplus_{s+t=n} K^{s,t}$ is the total complex. We pursue this approach in this subsection.

We define $K^{s,t} = {\rm Hom}(V^{\otimes(s+t)}, V^{\otimes(|t|+1)})$, whenever $s>0$, $s+t \geq |t| + 1$ and $s>t$, and put $K^{s,t} = 0$ otherwise. In this setup, then, we have the Hochschild cochain complex in bidegrees $(n,0)$, since $K^{n,0} =  {\rm Hom}(V^{\otimes n}, V)$. Moreover, we have the YB cochain complex in bidegrees $(2n-1,-n+1)$, since $K^{2n-1,-n+1} = {\rm Hom}(V^{\otimes n}, V^{\otimes n})$ for all $n\geq 1$. Note that while the Hochschild differential is of type $d_1$, i.e.~has bidegree 
$(1,0)$, the YB differential has bidegree $(2,-1)$ and therefore is of type $d_2$.

For $\psi \in K^{s,t}$, i.e. $\psi \in {\rm Hom}(V^{\otimes n}, V^{\otimes q})$ where $n = s+t$ and $q = |t| + 1 \geq 2$, we define a bidegree $(1,0)$ differential $d_1$ as follows. For $t>0$ we put $\rho = t$, 
and set
\begin{eqnarray*}
d_{1, 0}^{s,t} (\psi ) &=& (\mathbb 1^{\otimes (q-2)}\otimes \mu)\sigma_{q+1,q-2}\cdots\sigma_{q+1,1}(\mathbb 1 \otimes \psi) \\
d_{1,i}^{s,t}  (\psi ) &=& \psi(\mathbb 1^{\otimes i-1}\otimes \mu \otimes \mathbb 1^{n-i+1})\\
d_{1, \rho}^{s,t} &=& (\mathbb 1^{\otimes (q-1)}\otimes \mu)(\psi \otimes \mathbb 1)\sigma_{n+1,n}\cdots\sigma_{n+1,\rho+1}\sigma_{n+1,\rho}
\end{eqnarray*}
and define 
$$ d_1^{s,t} = d_{1,0}^{s,t} + \sum_{i=1}^{\rho -1}(-1)^i d_{1,i}^{s,t} + (-1)^{\rho} d_{1,\rho}^{s,t} . $$
In Figure~\ref{fig:YBHd1} the diagram representing the differential $d_1^{s,t} $ is depicted.

If $t\leq0$, we set $\rho = n-q+2$ and define
$$ d_1^{s,t} = (-1)^t d_{1,0}^{s,t} + \sum_{i=1}^{\rho -1}(-1)^i d_{1,i}^{s,t} + (-1)^{\rho} d_{1,\rho}^{s,t} . $$
\begin{figure}[htb]
	\begin{center}
		\includegraphics[width=4.5in]{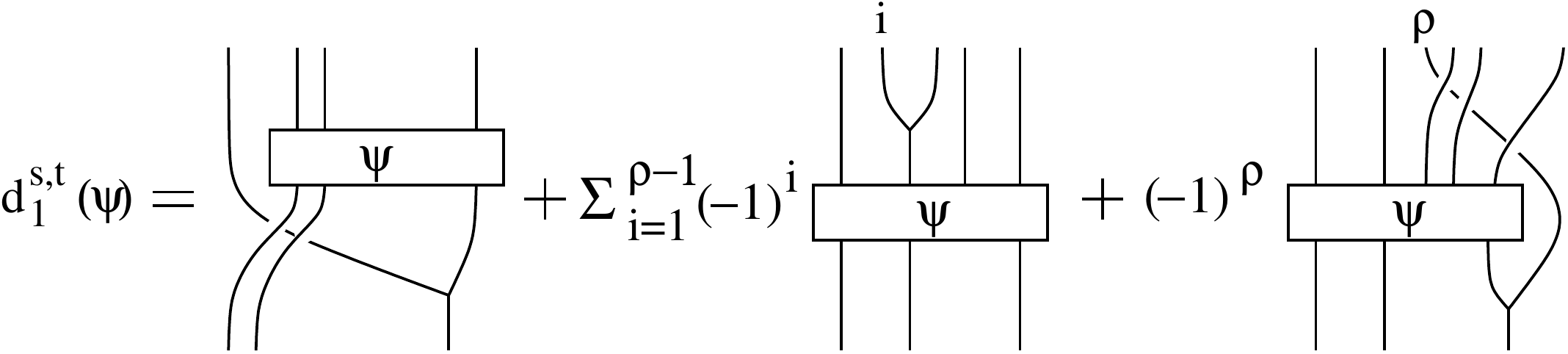}
	\end{center}
	\caption{ 
	}
	\label{fig:YBHd1}
\end{figure}

We also note that $d_1$ contains the Hochschild differential $d_{\rm H}$ 
as a special case when $t=0$. Indeed, the last sum in the definition of $d_1$ is identical to the corresponding term in Hochschild cohomology. The first two terms are the same as the two remaining terms of Hochschild cohomology. 
We note that when $t\neq 0$, cocycles $\psi$ are allowed to have more than one outputs. 

Moreover, for $\phi \in K^{s,t}$, i.e. $\phi \in {\rm Hom}(V^{\otimes n}, V^{\otimes q})$ where $n = s+t$ and $q = |t| + 1$, with the restriction that $t < 1$ and $(s,t)$ lies above the diagonal determined by $(2n-1,-n+1)$
(i.e., $n=s+t \geq q=|t|+1$), we introduce a bidegree $(2,-1)$ differential $d_2$ as follows. We set $\rho = q$ and define 
\begin{eqnarray*}
		d_2(\phi) &=& (-1)^n \sum_{i=1}^\rho(-1)^i [\sigma_{n+1,i-1}\cdots\sigma_{n+1,2}\sigma_{n+1,1}(\mathbb 1\otimes \phi)\sigma_{n+1,1}\cdots\sigma_{n+1,n-i}\sigma_{n+1,n-i+1}\\
		&& - \sigma_{n+1,i}\cdots\sigma_{n+1,n-1}\sigma_{n+1,n}(\phi \otimes \mathbb 1)\sigma_{n+1,n}\cdots\sigma_{n+1,n-i+3}\sigma_{n+1,n-i+2}.]. 
\end{eqnarray*}
If $t\geq 0$, we set $d_2=0$. 
We note that $d_2$ and $d_{\rm YB}$ are defined by the same formula. 
The differences are:  (1) the initial sign, 
(2) that the cochains are allowed to have less output factors  than inputs, and (3) that the summation for $d_{\rm YB}$
in the case of $n=q$  is up to $\rho=q+1$. 
Thus the differentials $d_1$ and $d_2$ represent generalized versions of Hochschild and YB differentials to $K^{s,t}$. 

\begin{figure}[htb]
	\begin{center}
		\includegraphics[width=6in]{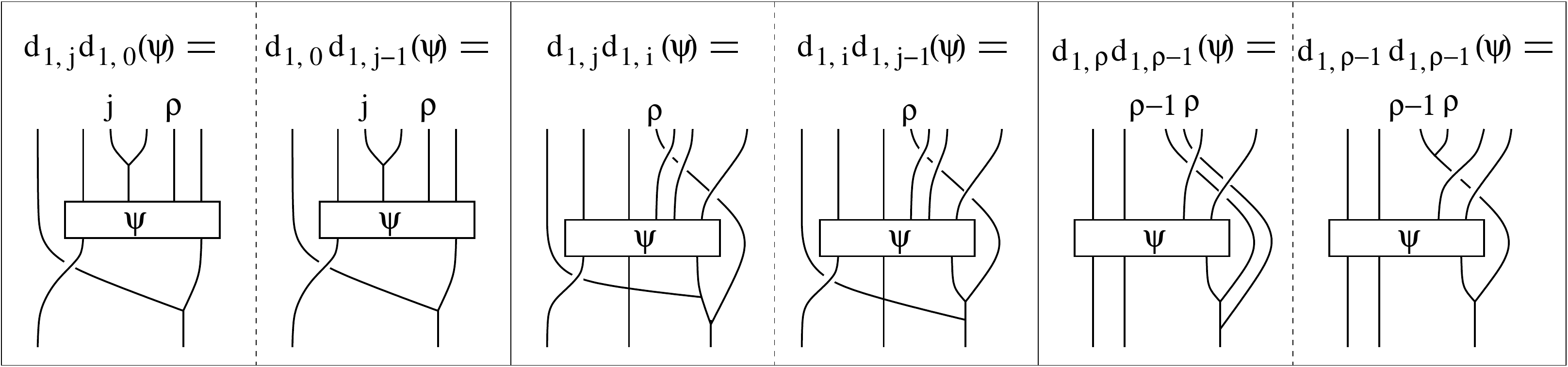}
	\end{center}
	\caption{ 
	}
	\label{fig:YBHd1d1}
\end{figure}

\begin{lemma}\label{lem:d_1_d2}
			We have $d_1^2 = d_2^2 = 0$. 
\end{lemma}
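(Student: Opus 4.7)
My plan is to handle the two identities separately, since they are logically independent.

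The identity $d_2^2 = 0$ I would deduce directly from the Yang--Baxter cohomology case already established in the previous theorem. The map $d_2$ is given by essentially the same alternating sum as $d_{\rm YB}$, with only two cosmetic differences: an overall sign $(-1)^n$, and a reduced summation range $1 \le i \le \rho$ rather than $1 \le i \le n+1$. Expanding $d_2 d_2(\phi)$ and grouping the resulting terms by ordered pairs of indices $(i, j)$, one encounters exactly the pairwise cancellations depicted in Figure~\ref{fig:YBdjdi}; each such cancellation only invokes the YBE on three consecutive strands and is insensitive both to the extra sign and to the fact that the target may have strictly fewer tensor factors than the source. The boundary terms at $i = q + 1$ that appear in the full YB complex are simply absent here (the corresponding output slot does not exist), and there are no unmatched residues.

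For $d_1^2 = 0$ I would expand $d_1^{s+1, t} d_1^{s, t}(\psi)$ as a double sum and sort each summand by the pair of positions $(i, j) \in \{0, 1, \ldots, \rho-1, \rho\}^2$ at which the two copies of $\mu$ are inserted. The inner--inner pairs with $1 \le i, j \le \rho - 1$ should cancel in pairs by associativity of $\mu$, exactly as in the classical Hochschild coboundary computation: these are the summands where both multiplications are inserted among the inputs without interacting with the YB crossings. The mixed pairs, in which one index is a boundary ($0$ or $\rho$) and the other is inner, require sliding the boundary $\mu$ past its cascade of YB crossings into the adjacent inner slot: this is precisely what the YI condition Equation~\eqref{eqn:YI} and the IY condition Equation~\eqref{eqn:IY} allow, and the resulting configurations then cancel an inner--inner term by associativity. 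The extreme pair $(0, \rho)$ should cancel by applying YI and IY together with the braid relations satisfied by the cascades of $\sigma_{\bullet, \bullet}$'s, which ultimately reduce to the YBE.

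The main obstacle I anticipate is the sign bookkeeping. Each application of YI or IY relocating a boundary term to an adjacent slot shifts the effective index, and the resulting sign must line up with the alternating sign $(-1)^{i+j}$ present in the double sum. The prefactor $(-1)^t$ that distinguishes the $t \le 0$ case from the $t > 0$ case in the definition of $d_1$ was presumably chosen precisely so that this cancellation is uniform across both regimes, but verifying it requires a careful case split between $t > 0$ and $t \le 0$. The cleanest presentation is diagrammatic, in the spirit of Figure~\ref{fig:YBHd1d1}: each cancelling pair corresponds to two planar graphs related by a single local move (associativity, YI, IY, or a braid slide), and these graphs appear with opposite signs in the double sum. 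Once the pairs are identified graphically, the verification reduces to a finite checklist of cases, and the classical Hochschild cancellation recovers the special case $t = 0$ automatically.
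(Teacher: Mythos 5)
Your proposal is correct and follows essentially the same route as the paper: both arguments treat $d_1$ and $d_2$ separately, reduce $d_2^2=0$ to the Yang--Baxter cohomology computation of Figure~\ref{fig:YBdjdi} (noting the sign and truncated range are harmless), and for $d_1^2=0$ cancel the inner--inner pairs by the classical Hochschild/associativity argument while handling the boundary terms $d_{1,0}$ and $d_{1,\rho}$ through the YI/IY relations and braid slides, which is exactly what the paper verifies diagrammatically in Figure~\ref{fig:YBHd1d1} via the identities $d_{a,j}d_{a,i}=d_{a,i}d_{a,j-1}$ for $i<j$. Your pairwise-cancellation bookkeeping is just a rephrasing of that simplicial identity, and your sign concern resolves as you suspect since the prefactor $(-1)^t$ is constant in both factors of $d_1^2$ (the bidegree $(1,0)$ preserves $t$).
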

\begin{proof}
By standard arguments it is enough to show that $d_{a,j}d_{a,i} = d_{a,i}d_{a,j-1}$ for $a=1,2$ and $i<j$.
For $d_1^2=0$, the middle terms $d_{1,i}^{s,t}$ for $i=1, \ldots, \rho -1$ are similar to Hochschild differentials, 
and the proof for $i,j = 1, \ldots, \rho-1$ is analogous to the Hochschild case. 
For the other terms, the terms are depicted in Figure~\ref{fig:YBHd1d1}. 
In the figure, superscripts $s,t$ are suppressed, and  canceling terms for $d_{1,j} d_{1,i} =d_{1,i} d_{1, j-1} $ 
in the cases in question 
are depicted in each frame, with identical pairs listed with dotted lines in the middle.
For $d_2^2=0$, the proof is similar to the YB cohomology which is shown in Figure~\ref{fig:YBdjdi}.
\end{proof}

When ${\rm char} \mathbb k = 2$, i.e. when all signs in the definitions can be discarded, the multicomplex so defined indeed satisfies $\partial^2 = 0$, and therefore induces a cohomology theory in its total complex.

\begin{proposition}
		Let $\mathbb k$ a ring of characteristic $2$, and let $V$ be a braided algebra. Then, the complex $K^{\bullet,\bullet}$ with $d_1$ and $d_2$ defined above is a multicomplex, i.e. satisfies $\partial^2=0$, where $\partial = \sum_r d_r$. 
\end{proposition}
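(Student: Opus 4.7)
The plan is as follows. By Lemma~\ref{lem:d_1_d2} we already have $d_1^2 = 0$ and $d_2^2 = 0$. Expanding the square of $\partial = d_1 + d_2$ gives
$$\partial^2 \;=\; d_1^2 + d_1 d_2 + d_2 d_1 + d_2^2 \;=\; d_1 d_2 + d_2 d_1,$$
so the entire content of the proposition is the graded commutation $d_1 d_2 + d_2 d_1 = 0$. Since $\mathrm{char}\,\mathbb k = 2$, this is equivalent to the bare identity $d_1 d_2 = d_2 d_1$, which is what I would try to establish.

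To prove the identity on a cochain $\psi \in K^{s,t}$, I would decompose $d_1 = \sum_i (-1)^{\epsilon_i} d_{1,i}$ and $d_2 = \sum_j (-1)^{\epsilon_j} d_{2,j}$ into their partial summands and, following the diagrammatic template used for $d_1^2=0$ in Figure~\ref{fig:YBHd1d1} and $d_{\mathrm{YB}}^2=0$ in Figure~\ref{fig:YBdjdi}, show that each composite $d_{1,i} d_{2,j}\psi$ either coincides with a composite $d_{2,j'} d_{1,i'}\psi$ appearing on the other side, or cancels in pairs among the mixed composites themselves. A partial $d_{1,i}$ inserts a multiplication vertex $\mu$ with flanking braidings required to route the strings into their final position, while a partial $d_{2,j}$ inserts a crossing $R$ together with the analogous flanking braidings. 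The structural ingredients available for moving an inserted $\mu$ past an inserted $R$ are exactly associativity of $\mu$, the YBE for $R$, and the compatibility axioms \eqref{eqn:YI} and \eqref{eqn:IY}; no other information about $V$ is used.

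I would then split into cases according to the relative location of the inserted $\mu$ and $R$ in the diagram of $\psi$. When the insertions occur at well-separated positions, the two local modifications commute on the nose and yield an immediate equality $d_{1,i} d_{2,j} = d_{2,\tilde\jmath}\, d_{1,\tilde\imath}$ after the standard index shift forced by the fact that applying $d_2$ first increases the number of strands, so that one side of each such pairing sits in $d_1 d_2$ and the other in $d_2 d_1$. When the insertions are adjacent, the local configuration that appears is precisely the left-hand side of \eqref{eqn:YI} (respectively \eqref{eqn:IY}), and the braided algebra axiom rewrites it as the right-hand side, converting a $(d_1 d_2)$-term into a $(d_2 d_1)$-term. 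The extremal summands $d_{1,0}$ and $d_{1,\rho}$, which carry extra braidings needed to place $\mu$ at the top or bottom of the diagram, require applying YI or IY once more at the boundary to match the analogous extremal contributions of $d_2 d_1$; this is directly modeled on how the boundary cases were handled for $d_1^2=0$ in Figure~\ref{fig:YBHd1d1}.

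The main obstacle is bookkeeping rather than mathematical depth. One has to enumerate carefully all pairs $(i,j)$, track the index shift $n\rightsquigarrow n+1$ induced by the action of $d_2$ before $d_1$ (versus the opposite order), and, crucially, verify that the pairing between composites in $d_1 d_2$ and composites in $d_2 d_1$ is an actual bijection rather than leaving unmatched ``diagonal'' terms. The characteristic $2$ hypothesis is used only to sidestep sign matching across this bijection, so once the diagrammatic pairing is written out in frames analogous to Figure~\ref{fig:YBHd1d1}, each frame realizes one of four local moves (trivial commutation, associativity, YBE, or YI/IY), and the identity $d_1 d_2 = d_2 d_1$ follows, completing the proof that $\partial^2 = 0$.
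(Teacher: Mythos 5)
Your plan is essentially the paper's own proof: both reduce $\partial^2=0$ to the single identity $d_1d_2+d_2d_1=0$ via Lemma~\ref{lem:d_1_d2}, and both establish it by decomposing into partial summands and pairing composites diagrammatically (the paper does this in Figures~\ref{fig:YBHd1d2} and \ref{fig:YBHd2d1}, tracking the summation bounds $\rho_1=s+2t$, $\rho_2$ and the index shift you describe). One concrete point your outline glosses over, and which the paper treats as a separate subcase: $d_2$ is \emph{not} defined uniformly across the bidegrees involved. On the diagonal $K^{2a-1,-a+1}$ the map $d_2$ coincides with $d_{\rm YB}$, whose sum runs to $\rho=q+1$ rather than $\rho=q$, so when the source $K^{s-3,t+1}$ lies on that diagonal the composite $d_1\circ d_{\rm YB}$ acquires extra summands that have no partners under the generic pairing with $d_2\circ d_1$ (the intermediate bidegree $K^{s-2,t+1}$ is off the diagonal, so its $d_2$ lacks the extra term). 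The paper isolates this case and checks by direct inspection that the four resulting extra terms cancel among themselves. Your phrase ``cancels in pairs among the mixed composites themselves'' leaves room for this, but as written your enumeration assumes a fixed range for $j$ in $d_{2,j}$, so you should explicitly split off the case $(s-3,t+1)=(2a-1,-a+1)$ before claiming the bijection; you should also dispose of the trivial case $t\geq 0$, where $d_2$ vanishes and there is nothing to prove. With those two case distinctions added, your argument is the paper's argument.
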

\begin{proof}
		The proof is a direct computation. First, observe that we need to check that all terms landing in a direct summand $K^{s,t}$ with $s+t=n$ via $\partial^2$ are zero. There are, by construction of $\partial$, four combinations of $d_1$ and $d_2$ that map into $K^{s,t}$. These are
		\begin{eqnarray*}
				K^{s-2,t} \xrightarrow{d_1} &K^{s-1,t}&  \xrightarrow{d_1}  K^{s,t},\\
				K^{s-3,t+1} \xrightarrow{d_2} &K^{s-1,t}&  \xrightarrow{d_1}  K^{s,t} , \\
				K^{s-3,t+1} \xrightarrow{d_1} &K^{s-2,t+1}&  \xrightarrow{d_2}  K^{s,t}, \\
				K^{s-4,t+2} \xrightarrow{d_2} &K^{s-2,t+1}&  \xrightarrow{d_2}  K^{s,t}.
		\end{eqnarray*}
		Then, we need to show that $d_1\circ d_1(\phi_1) + [d_1\circ d_2+d_2\circ d_1](\phi_2) + d_2\circ d_2(\phi_3) = 0$. By Lemma~\ref{lem:d_1_d2} we have that $d_1^2 = d_2^2 = 0$.
We 
show that $d_1\circ d_2 + d_2\circ d_1 = 0$. First, we observe that when $t\geq0$ both terms vanish since $d_2(K^{s,t}) = 0$ when $t>0$.  Hence  
we may assume that $t<0$ and therefore $t+1\leq 0$. For this case, we distinguish two subcases. The first one is when $d_2$ is not $d_{\rm YB}$, and the second one is when $d_2 = d_{\rm YB}$. This means that we distinguish the cases when $(s-3,t+1) \neq (2a-1,-a+1)$ for some $a$, and when $(s-3,t+1) = (2a-1,-a+1)$ for some $a$. 

In the first subcase, in Figures~\ref{fig:YBHd1d2} and \ref{fig:YBHd2d1} the terms of $d_1 d_2 (\phi)$ and $d_2 d_1 (\phi)$ are respectively depicted. 
The summation bounds are as follows.
The differential $d_1\circ d_2$ maps $K^{s-3,t+1}$ into $K^{s,t}$, and therefore $d_2$ is defined on ${\rm Hom}(V^{\otimes(s+t-2)},V^{\otimes (-t)})$, while $d_1$ is defined on ${\rm Hom}(V^{\otimes(s+t-1)},V^{\otimes (-t+1)})$. The sum over $i$, which refers to $d_1$ runs from $1$ to $\rho_1 = s+t-1-(-t+1)+2 = s+2t$. The sum on $j$, which refers to $d_2$ runs from $1$ to $\rho_2 = q$. However,  as in Figure~\ref{fig:partial_YB} this means that $d_2$ involves strings starting from $n-q+2 = s+t-2-(-t)+2 = s+2t$. Therefore, $i$ and $j$ are on the left and right of $\rho = s+2t$, respectively. For the composition $d_2\circ d_1$, similar calculations give sums up to $\rho_1 = s+2t$ and $\rho_2 = s+2t+1$. Since $d_1\circ d_2$ has a shift in the tensorands to which $d_2$ is applied, the summation bounds agree in both cases.  
Using the diagrammatics in Figures~\ref{fig:YBHd1d2} and \ref{fig:YBHd2d1}, along with the summation bounds obtained, one sees that $d_1\circ d_2 + d_2\circ d_1 = 0$. Observe that we used a single $\rho$ for $d_2\circ d_1$ as well, to indicate the position of $s+2t$, since in both cases the summation positions agree, once we consider the labeling shifts. 

When $d_2 = d_{\rm YB}$, a new subtlety arises, which involves the fact that the bounds for defining $d_{\rm YB}$ include an extra term in the sum. Notice that the differential involved are $d_1\circ d_{\rm YB}$ and $d_2\circ d_1$, since $K^{s-2,t+1}$ is not of type $K^{2a-1,-a+1}$. 
The new terms give rise, upon considering the composition $d_1\circ d_{\rm YB}$, to four extra terms, which are seen by direct inspection to cancel out.
This completes the proof.
\begin{figure}[htb]
	\begin{center}
		\includegraphics[width=6in]{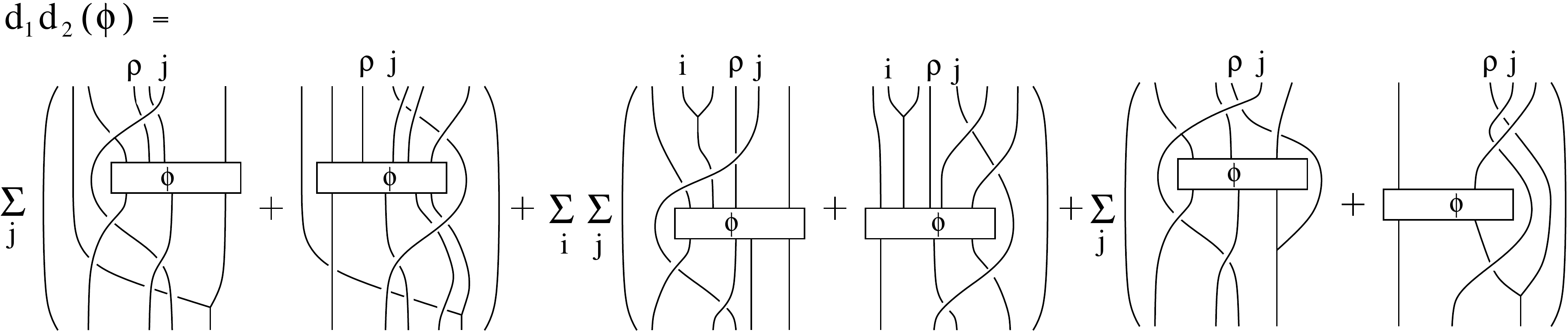}
	\end{center}
	\caption{  
	}
	\label{fig:YBHd1d2}
\end{figure}
\begin{figure}[htb]
	\begin{center}
		\includegraphics[width=6in]{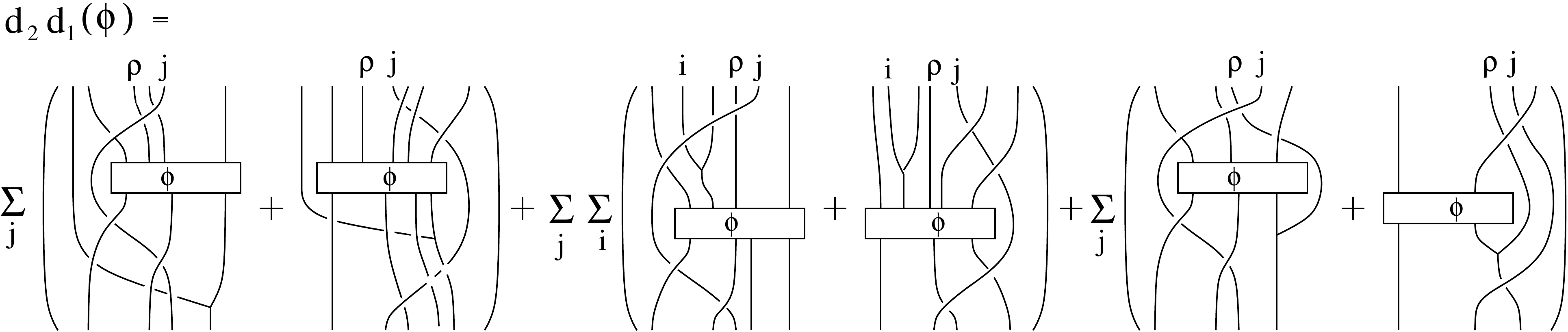}
	\end{center}
	\caption{ 
	}
	\label{fig:YBHd2d1}
\end{figure}
\end{proof}

It  is still unclear whether the associated total complex actually satisfies $\partial^2 = 0$ for all degrees when the characteristic of $\mathbb k$ is not $2$. 
 The main issue
  is the definition of the signs involved. 
 The cancellation scheme that gives $\partial^2 = 0$ in characteristic $2$ does not seem to apply because some signs appear concordant, therefore preventing cancellation. 

It is desirable that adjustments in signs in our differential to extend coefficients to integers.
Other possible approaches include the following.
In \cite{Mar-Shn,Shr}, the fact that all the terms of the multicomplex give a differential is proved using topological results on the associahedron.
 In our case, the permutoassociahedron considered in \cite{Kap}  may be relevant, but 
any relation  to the cohomology studied in this article is not clear.
Another approach may be  the iterated cones used in \cite{Yetter}.

 In the remaining part of the subsection, 
we recover the same differentials as in the YI part of the YBH cohomology complex \cite{SZ-YBH}
in low degrees. 
 There is another part called IY part defined in \cite{SZ-YBH}
and there is a counterpart for our complex. Due to  symmetry of the theory, however, we focus on the YI part in the following. 

At degree $1$, we have 
$$K^1 := \bigoplus_{s+t=1} K^{s,t} = K^{1,0} = {\rm Hom}(V,V).$$
 At deegree $2$, we have 
 $$K^2 := \bigoplus_{s+t=2} K^{s,t} = K^{2,0}\oplus K^{3,-1} = {\rm Hom}(V^{\otimes 2},V) \oplus {\rm Hom}(V^{\otimes 2},V^{\otimes 2}).$$
 Hence both $K^1$ and $K^2$ coincide with the YI cochain groups. The differential $\partial : K^1 \rightarrow K^2$ is the sum of the only two nonzero terms, namely $d_{\rm H} : K^{1,0} \rightarrow K^{2,0}$ and $d_{\rm YB}: K^{1,0} \rightarrow K^{3,-1}$. This coincides with $\delta^1_{\rm YI}$. 

Let us now consider the terms in degree $3$ of the associated total complex. We have 
\begin{eqnarray*}
K^3 & := &  \bigoplus_{s+t=3} K^{s,t} = K^{2,1} \oplus K^{3,0} \oplus K^{4,-1} \oplus K^{5,-2} \\
&=& {\rm Hom}(V^{\otimes 3},V^{\otimes 2}) \oplus {\rm Hom}(V^{\otimes 3},V^{\otimes 1}) \oplus {\rm Hom}(V^{\otimes 3},V^{\otimes 2})\oplus {\rm Hom}(V^{\otimes 3},V^{\otimes 3}).
\end{eqnarray*}
 We now compute $\partial: K^2 \rightarrow K^3$ by computing each component and summing them together in their respective direct summands. 
 We have the Hochschild differential $ d^{2,0}_1 =  d_{\rm H} : K^{2,0} \rightarrow K^{3,0}$, and the YB differential $d^{3,-1}_2 =  d_{\rm YB} : K^{3,-1} \rightarrow K^{5,-2}$. Additionally, we have 
 $d_1  = d^{3,-1}_1    : K^{3,-1} \rightarrow K^{4,-1}$ and $d_2 = d^{2,0}_2   : K^{2,0} \rightarrow K^{4,-1}$. Using the definitions, with $\alpha\in K^{3,-1}$ and $\beta\in K^{2,0}$ we find
 \begin{eqnarray*}
 		d_1(\alpha) &=& (\mathbb 1\otimes \mu)(R\otimes \mathbb 1)(\mathbb 1\otimes \alpha) 
		 - \alpha (\mu \otimes \mathbb 1 ) 
		+ (\mathbb 1\otimes \mu)(\alpha \otimes \mathbb 1)(\mathbb 1\otimes R),\\
 		d_2(\beta) &=& (\mathbb 1\otimes \beta)(R\otimes \mathbb 1)(\mathbb 1\otimes R) - R(\beta\otimes \mathbb 1).
 \end{eqnarray*}
The sum $d_1(\alpha) + d_2(\beta)$ gives precisely the term $\delta^2_{\rm YI}$ upon renaming $\alpha = \phi$ and $\beta = \psi$. 

Lastly, we consider the total complex at degree $4$, and the corresponding third differential. We have 
\begin{eqnarray*}
K^4 &:=& \bigoplus_{s+t=4} K^{s,t} = K^{3,1} \oplus K^{4,0} \oplus K^{5,-1} \oplus K^{6,-2} \oplus K^{7,-3} \\
&=&  {\rm Hom}(V^{\otimes 4},V^{\otimes 2}) \oplus {\rm Hom}(V^{\otimes 4},V^{\otimes 1}) \oplus {\rm Hom}(V^{\otimes 4},V^{\otimes 2}) \\
& & \hspace{1.1in} \oplus \, {\rm Hom}(V^{\otimes 4},V^{\otimes 3}) \oplus {\rm Hom}(V^{\otimes 4},V^{\otimes 4}) .
\end{eqnarray*}
 This agrees with the YBH deformation of the YI component of this article. The corresponding nontrivial differentials that add up to give the total differential $\partial$ are as follows. We have $  d^{3,0}_1  = d_{\rm H}: K^{3,0} \rightarrow K^{4,0}$ and $ d^{5,-2}_2 =  d_{\rm YB} : K^{5,-2} \rightarrow K^{7,-3}$. We note that no other differentials have $K^{4,0}$ and $K^{7,-3}$ as target, where we use the fact that $d_2 = d_{\rm YB}$ (as pointed out above) when $K^{s,t}$ has the form $K^{2n-1,-n+1}$, and the additional fact that $d_2 = 0$ on $K^{s,t}$ with $t\geq 1$. 
 Therefore, we obtain the Hochschild and YB components of the third differentials of YBH. 

Let us now calculate the remaining terms, and compare them with the YI component of $\delta^3_{\rm YBH}$. 
 In \cite{SZ-YBH}, the third YBH differential is defined as follows.
Define $\delta^3_{\rm YI} (V,V) : C^3_{\rm YI} (V,V) \rightarrow  C^4_{\rm YI} (V,V)$ by 
	$\delta^3_{\rm YI} = \delta^3_{\rm YB} \oplus \delta^{3,1}_{\rm YI}  \oplus  \delta^{3,2}_{\rm YI}
	\oplus \delta^{3,3}_{\rm YI}   \oplus \delta^3_{\rm H}$, where each direct summand of the differential is defined as follows.
For $\alpha \in  C^{3,2}_{\rm YI}(V,V)$ and $\beta \in C^{3,3}_{\rm YBH}(V,V)$, 
define 
$\delta^{3,1}_{\rm YI} : C^{3,3}_{\rm YBH}(V,V)  \oplus C^{3,2}_{\rm YI}(V,V)
\rightarrow C^{4,3}_{\rm YI}(V,V)   $
by
\begin{eqnarray*}
\delta^{3,1}_{\rm YI}(\beta \oplus \alpha  ) 
&=& 
( {\mathbb 1} \otimes R ) ( \alpha \otimes {\mathbb 1} ) ( {\mathbb 1}^{\otimes 2}  \otimes R )
+( {\mathbb 1} \otimes \alpha ) ( R \otimes {\mathbb 1}^{\otimes 2} ) (  {\mathbb 1} \otimes R \otimes {\mathbb 1} ) ( {\mathbb 1}^{\otimes 2}  \otimes  R ) 
\\ & & 
+
( {\mathbb 1}^{\otimes 2}  \otimes   \mu ) (    {\mathbb 1} \otimes R \otimes {\mathbb 1} ) 
( R \otimes {\mathbb 1}^{\otimes 2} ) ( {\mathbb 1} \otimes \beta) 
+ ( {\mathbb 1}^{\otimes 2}  \otimes   \mu ) ( \beta \otimes  {\mathbb 1} )
( {\mathbb 1}^{\otimes 2}  \otimes   R ) (   {\mathbb 1} \otimes R \otimes {\mathbb 1} ) \\
& & - 
\beta ( \mu \otimes  {\mathbb 1}^{\otimes 2} ) 
-
  ({\mathbb 1} \otimes R )( R \otimes     {\mathbb 1} ) (\alpha \otimes {\mathbb 1} ) - (R\otimes \mathbb 1)(\mathbb 1\otimes \alpha)(R\otimes \mathbb 1^{\otimes 2})(\mathbb 1\otimes R\otimes \mathbb 1) .
 \end{eqnarray*}
Define 
 $C^{3,2}_{\rm YI}(V,V) \oplus C^{3,2}_{\rm IY}(V,V)
\rightarrow C^{4,2}_{\rm IY}(V,V)
$
for $\alpha \in C^{3,2}_{\rm YI}(V,V)$ and $\alpha ' \in C^{3,2}_{\rm IY}(V,V)$ 
by
\begin{eqnarray*}
\delta^{3,2}_{\rm YI}( \alpha \oplus \alpha ') 
&=& 
\alpha ' ( \mu \otimes     {\mathbb 1}^{\otimes 2} ) 
+ 
( \mu \otimes  {\mathbb 1} )  (   {\mathbb 1} \otimes R ) (\alpha  \otimes     {\mathbb 1} )
+ 
(  {\mathbb 1} \otimes \alpha ) ( \mu \otimes     {\mathbb 1} )
(   R \otimes {\mathbb 1}^{\otimes 2} ) 
(   {\mathbb 1} \otimes R \otimes {\mathbb 1} ) \\
& & 
- \alpha (   {\mathbb 1}^{\otimes 2} \otimes     \mu ) 
- 
(  {\mathbb 1} \otimes \mu ) ( R \otimes  {\mathbb 1} ) (  {\mathbb 1} \otimes \alpha '   ) 
-
( \alpha '  \otimes {\mathbb 1} ) (  {\mathbb 1} \otimes \mu )
(  {\mathbb 1}^{\otimes 2}  \otimes R )
(    {\mathbb 1} \otimes R \otimes {\mathbb 1} ) . 
 \end{eqnarray*}
Define further
 $\delta^{3,3}_{\rm YI} : C^{3,2}_{\rm YBH}(V,V)  \oplus C^{3,1}_{\rm YBH}(V,V)
\rightarrow C^{4,2}_{\rm YBH}(V,V)   $
for $\alpha \in C^{3,2}_{\rm YBH}(V,V) $ and $\gamma \in  C^{3,1}_{\rm YBH}(V,V)$
by
\begin{eqnarray*}
\delta^{3,3}_{\rm YI}(\alpha \oplus \gamma) 
&=& R (\gamma \otimes {\mathbb 1}) 
+\alpha  ( {\mathbb 1} \otimes \mu \otimes  {\mathbb 1}  )  + ( {\mathbb 1} \otimes  \mu )(R \otimes  {\mathbb 1}) ( {\mathbb 1} \otimes \alpha ) \\
& & 
-\alpha (\mu \otimes  {\mathbb 1}^{\otimes 2} ) 
- ({\mathbb 1} \otimes \mu)  (\alpha \otimes {\mathbb 1})  ( {\mathbb 1}^{\otimes 2} \otimes R)
- ( {\mathbb 1} \otimes \gamma) (R \otimes  {\mathbb 1}^{\otimes 2} ) (  {\mathbb 1} \otimes R \otimes {\mathbb 1}) ( {\mathbb 1}^{\otimes 2} \otimes R) 
.
\end{eqnarray*}
The differential $\delta^{3,1}_{\rm YI}$ corresponds to the diagram in Figure~\ref{YII}, where dotted vertices 
assigned to edges represent cochains.

The terms with image in $K^{5,-1}$ are $d_1 : K^{4,-1} \rightarrow K^{5,-1}$ and $d_2 : K^{3,0} \rightarrow K^{5,-1}$. By direct inspection one sees that this gives the equation for $\delta^{3,3}_{\rm YI}$. 

 The terms in $K^{6,-2}$ 
 are $d_1 : K^{5,-2} \rightarrow K^{6,-2}$ and $d_2 : K^{4,-1} \rightarrow K^{6,-2}$. 
 They give the term $\delta^{3,1}_{\rm YI}$ of the third differential of the YI component of YBH cohomology. 
 
  The only differential with image in $K^{3,1}$ is $d_1 : K^{2,1} \rightarrow K^{3,1}$. This gives
\begin{eqnarray*}
		d_1(\alpha) &=& \alpha(\mu \otimes\mathbb 1^{\otimes 2}) 
		 - (\mathbb 1\otimes \mu)(R\otimes \mathbb 1)(\mathbb 1\otimes \alpha) - (\mathbb 1\otimes \mu)(\alpha \otimes \mathbb 1)(\mathbb 1^{\otimes 2}\otimes R)(\mathbb 1\otimes R \otimes \mathbb 1).
\end{eqnarray*}
This is not the same as $\delta^{3,2}_{\rm YI}$.
 However, we notice that once we include the symmetric component due to the IY part of YBH cohomology, we obtain $\delta^{3,2}_{\rm YI}$. 
 Hence the addition of the two YI and IY components reconstruct $\delta^{3,2}_{\rm YI}$ together.

\subsection{Including braided commutativity to the YBH multicomplex in low dimensions}

We follow an approach in \cite{Harris} in which  commutative algebra cohomology was defined by 
adding cochains related to commutativity.

We  add a condition on the multicomplex directly in total degrees $2$ and $3$ in such a way that $2$ and $3$-cocycles satisfy braided commutativity. We introduce a term of total degree $2$, which we denote by $K^2_{\rm BC} := {\rm Hom}(V^{\otimes 2}, V)$ and add it to $K^2 = K^{2,0} \oplus K^{3,-1}$.
These constraints are given by restricting to those pairs $\psi\oplus \phi$ such that 
\begin{eqnarray*}
		\lambda^2_{\rm BC}(\psi\oplus \psi) :=  \mu  \phi+ \psi  R - \psi
\end{eqnarray*}
vanishes, where $\lambda^2_{\rm BC} : K^2 \rightarrow K^2_{\rm BC}$. Moreover, as already computed we have that  
$$K^3 := \bigoplus_{s+t=3} K^{s,t} = K^{2,1} \oplus K^{3,0} \oplus K^{4,-1} \oplus K^{5,-2}$$
gives the summands of $C^3_{\rm YBH}$. We add a new term $K^3_{\rm BC} := {\rm Hom}(V^{\otimes 3}, V^{\otimes 2})$ to $K^3$.
We then restrict to those cochains such that for $\alpha \in K^{4,-1}$, $\alpha'\in K^{2,1}$, and $\beta\in K^{5,-1}$ 
we impose 
\begin{eqnarray*}
		\lambda^3_{\rm BC}(\alpha \oplus \alpha'\oplus  \beta \oplus \tau) = \delta^3_{\rm BC:YI}(\alpha \oplus \beta \oplus \tau)\oplus \delta^3_{\rm BC:YI}(\alpha' \oplus \beta \oplus \tau)=0 . 
\end{eqnarray*}

Lemma~\ref{lem:deg4} can substantially be reformulated in this context by saying that restricting the multicomplex to those cochains that satisfy the braided commutativity constraints $\lambda^2_{\rm BC}=0$
and $\lambda^3_{\rm BC}=0$ induces a total complex. The corresponding cohomology in low-degrees gives the braided commutative cohomology up to third differentials.  

However, the braided commutative constraint in higher degrees is still not understood. A geometric/diagrammatic approach to determining at in level $n=4$ is given in Section~\ref{sec:BC_n_4}, and 
the complexity of the problem seems to get considerably higher.

\subsection{Diagrammatic approach}\label{sec:BC_n_4}

In this section we provide one of candidate 4-differentials $\delta^{4,1}_{BC}$ and 
show that $\delta^{4,1}_{BC} \delta^3 =0$  using diagrammatic methods.

Let $C^{4,1}_{\rm BC}:= C^{3}_{\rm BC}(X,X) \oplus  C^{3}_{\rm YI}(X,X) \oplus  C^3_{\rm YB}(X,X)$,
we define $\delta^{4,1}_{\rm BC}: C^3_{\rm BC} (X,X) \rightarrow C^{4,1}_{\rm BC}$, 
for $\Phi \in C^{3}_{\rm BC}(X,X)$, $\Psi \in C^{3}_{\rm YI}(X,X)$ and $\Sigma \in C^3_{\rm YB}(X,X)$, 
by the formula
$$
\delta^{4,1}_{\rm BC}(\Phi \otimes \Psi \otimes \Sigma) 
=
 \Phi_1 + \Phi_2 - \Phi_3 - \Phi_4 - \Psi_1 - \Psi_2 +   \Sigma'
$$
where 
\begin{eqnarray*}
\Phi_1 &=&  (R \otimes {\mathbb 1})( {\mathbb 1} \otimes  R) (  \Phi   \otimes    {\mathbb 1} )  , \\
\Phi_2 &=& (R \otimes {\mathbb 1} ) (  {\mathbb 1} \otimes  \Phi     ) ( R \otimes {\mathbb 1} ) ( {\mathbb 1} \otimes R \otimes {\mathbb 1} ),  \\
\Phi_3 &=& ( {\mathbb 1} \otimes R ) ( \Phi \otimes  {\mathbb 1}) ( {\mathbb 1}^{\otimes 2} \otimes R) , \\
\Phi_4 &=& ( {\mathbb 1} \otimes \Phi  ) ( R \otimes {\mathbb 1}^{\otimes 2} ) ( {\mathbb 1} \otimes R \otimes {\mathbb 1} ) ( {\mathbb 1}^{\otimes 2} \otimes R) , \\
\Psi_1 &=&   \Psi , \\
\Psi_2 &=& \Psi ( R \otimes {\mathbb 1}^{\otimes 2} ) , \\
\Sigma' &=& ( {\mathbb 1}^{\otimes 2} \otimes \mu ) \Sigma  .
\end{eqnarray*}

We use the notation  $(  \Phi   \otimes    {\mathbb 1} )$ in $\Phi_1$ to indicate every monomial tensor term in $\Phi$ 
receives extra factor $\otimes    {\mathbb 1}$. For example the first term of $\Phi$ is 
$\alpha (R \otimes  {\mathbb 1})$, so that the first term of $(  \Phi   \otimes    {\mathbb 1} )$ is 
$( \alpha  \otimes  {\mathbb 1}) (R \otimes  {\mathbb 1}^{\otimes 2} )$.

\begin{proposition}
$\delta^{4,1}_{\rm BC} \delta^3_{\rm BC} =0 $.
\end{proposition}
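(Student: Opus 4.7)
The plan is to prove $\delta^{4,1}_{\rm BC}\delta^3_{\rm BC}=0$ by direct diagrammatic expansion, closely following the strategy used to prove Lemma~\ref{lem:deg4}. I would start by applying $\delta^3_{\rm BC}$ to a generic cochain $(\alpha \oplus \alpha' \oplus \beta \oplus \gamma \oplus \tau) \in C^3_{\rm BC}(X,X)$, obtaining the three components $\delta^3_{\rm YBH}(\alpha \oplus \alpha' \oplus \beta \oplus \gamma)$, $\delta^3_{\rm BC:YI}(\alpha \oplus \beta \oplus \tau)$ and $\delta^3_{\rm BC:IY}(\alpha' \oplus \beta \oplus \tau)$ prescribed in Definition~\ref{def:d3}. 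Each of the seven summands $\Phi_1, \Phi_2, -\Phi_3, -\Phi_4, -\Psi_1, -\Psi_2, \Sigma'$ defining $\delta^{4,1}_{\rm BC}$ is then applied to this image, yielding a long formal sum of tree diagrams that I would draw term by term, in the spirit of Figures~\ref{BCdiff2diff3} and \ref{Diff30}.

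Next, I would sort the resulting diagrams according to which component of the input they come from. Contributions built only from $\beta$ and $\gamma$, i.e.\ from the pure YBH part of $\delta^3_{\rm BC}$, should cancel among themselves: they encode the identity $\delta^4_{\rm YBH}\delta^3_{\rm YBH}=0$ already implicit in the YBH cochain complex of \cite{SZ-YBH}, together with the standard Yang-Baxter and Hochschild cocycle identities. In particular, the $\Sigma' = (\mathbb 1^{\otimes 2}\otimes \mu)\Sigma$ term should absorb precisely those YB-shaped contributions that appear when a multiplication is inserted on the last two tensorands. The diagrams involving the BC-specific cochain $\tau$ and the YI/IY cochains $\alpha,\alpha'$ then have to be paired against each other using the three Reidemeister-type moves of Figure~\ref{BA}(D)--(F) together with the braided commutativity move of Figure~\ref{BCeq}. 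Each cancellation would be recorded by giving identical index labels to the two diagrams that annihilate, exactly as is done in Figure~\ref{Diff30}.

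To keep the bookkeeping manageable, I would organize the computation into three tables, one per summand of $\delta^3_{\rm BC}$, perform the intra-table cancellations first (where the pairing is forced by the YBH chain complex structure), and then verify that all remaining cross-table terms disappear through an application of the YI, IY, or braided commutativity relation. A useful secondary device is to match each diagram against its mirror coming from the YI/IY symmetry of the BC complex, which should roughly halve the number of independent checks.

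The main obstacle I anticipate is the sheer bookkeeping: on the order of several dozen tree diagrams arise and the correct pairing among them (including signs) must be identified without error. A delicate sub-issue is that the $\Psi$-terms $-\Psi_1 - \Psi_2$ and the $\Phi$-terms $\Phi_1, \Phi_2, -\Phi_3, -\Phi_4$ produce diagrams that look superficially alike but differ by the placement of a crossing below versus above the cochain vertex, so some care will be required to distinguish them; I would address this by adopting the labelling convention of Figure~\ref{BCdiff3} consistently throughout, and by drawing the intermediate compositions $R\otimes \mathbb 1$, $\mathbb 1 \otimes R$, $\mu \otimes \mathbb 1$, $\mathbb 1 \otimes \mu$ with explicit tensorand indices to avoid confusion.
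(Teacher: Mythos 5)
Your overall strategy---expand $\delta^{4,1}_{\rm BC}\delta^3_{\rm BC}$ into a sum of tree diagrams and cancel them in pairs, as in Lemma~\ref{lem:deg4}---is the same as the paper's, which substitutes $\Phi=\delta^3_{\rm BC:YI}(\alpha\oplus\beta\oplus\gamma)$, $\Psi=\delta^{3,1}_{\rm YI}(\beta\oplus\alpha)$ and $\Sigma=\delta^3_{\rm YB}(\beta)$ into the seven summands of $\delta^{4,1}_{\rm BC}$ and lists the cancelling pairs explicitly (they are the shared edges of the polygons in Figure~\ref{fig:BCd41}). The most serious gap is that you treat the verification as a pure sign-bookkeeping exercise, expecting every diagram to be annihilated by a partner of \emph{opposite} sign. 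That is not what happens. Only part of the terms cancel on the nose; the remainder occur in pairs such as $-(\Psi_2,7)$ and $(\Phi_2,1)$, which are \emph{equal} (not opposite) after an application of the Yang--Baxter equation or one of the YI, IY, or braided commutativity relations---these are the parallel edges of the shaded squares in Figure~\ref{fig:BCd41}, recording that two distant operations commute. Such a pair contributes twice the same term, which vanishes only over a ground ring of characteristic $2$. Your computation, carried out over a general unital ring with careful signs, would therefore not close up: you must either restrict to ${\rm char}\, \mathbb k = 2$, as the paper implicitly does, or modify the sign conventions in the definition of $\delta^{4,1}_{\rm BC}$, a problem the paper explicitly leaves open.

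A second gap is your plan to dispose of the contributions coming from the pure YBH part of $\delta^3_{\rm BC}$ by invoking an identity $\delta^4_{\rm YBH}\delta^3_{\rm YBH}=0$ ``already implicit'' in \cite{SZ-YBH}. No fourth YBH differential is defined there; $\delta^{4,1}_{\rm BC}$ is a new candidate map, and the vanishing of exactly those contributions---for instance the interaction of $\Sigma'=(\mathbb 1^{\otimes 2}\otimes\mu)\,\delta^3_{\rm YB}(\beta)$ with the $\Phi_i$ and $\Psi_j$ terms---is part of what must be checked by hand, not something that can be quoted. Relatedly, $\delta^{4,1}_{\rm BC}$ consumes only three specific summands of the image of $\delta^3_{\rm BC}$ (the ${\rm BC{:}YI}$ component, the $\delta^{3,1}_{\rm YI}$ component, and the YB component), so the generic five-tuple $\alpha\oplus\alpha'\oplus\beta\oplus\gamma\oplus\tau$ you begin with collapses to the data $(\alpha,\beta,\gamma)$ before any cancellation starts; organizing the computation around all five inputs and a presumed YI/IY mirror symmetry would misallocate terms that in fact never enter this particular composite.
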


\begin{proof}
We substitute $\delta^3_{\rm BC} (\alpha \otimes \beta \otimes \gamma)$ in  $\Phi \otimes \Psi \otimes \Sigma$ in  the defining formula of 
$\delta^{4,1}_{\rm BC} $ and compute that all terms cancel.

\begin{eqnarray*}
\quad \Phi_1 &=&  (R \otimes {\mathbb 1}) ( {\mathbb 1} \otimes  R)   \{  \delta^3_{\rm BC:YI} ( \alpha \oplus \beta \oplus \gamma ) \otimes  {\mathbb 1} \} \\
&=& (R \otimes {\mathbb 1})  ( {\mathbb 1} \otimes  R) \{ (  [ \alpha  (R \otimes \mathbb 1 ) +
 R ( \gamma \otimes \mathbb 1 ) ]  
- 
[({\mathbb 1} \otimes \mu ) \beta +
 ({\mathbb 1} \otimes  \gamma ) (R \otimes {\mathbb 1} )({\mathbb 1} \otimes  R )+ 
 \alpha ] ) \otimes {\mathbb 1} \} , \\ 
 \quad \Phi_2 &=& (R \otimes {\mathbb 1} ) ( {\mathbb 1} \otimes   \delta^3_{\rm BC:YI} ( \alpha \oplus \beta \oplus \gamma ) ) 
 ( R \otimes {\mathbb 1}^{\otimes 2} ) ( {\mathbb 1} \otimes R \otimes {\mathbb 1} ) \\
&=&
  (R \otimes {\mathbb 1} )   \{  {\mathbb 1} \otimes (  [ \alpha  (R \otimes \mathbb 1 ) +
 R ( \gamma \otimes \mathbb 1 ) ]  
 \\ & & 
- 
[({\mathbb 1} \otimes \mu ) \beta +
 ({\mathbb 1} \otimes  \gamma ) (R \otimes {\mathbb 1} )({\mathbb 1} \otimes  R )+ 
 \alpha ] ) \}  
( R \otimes {\mathbb 1}^{\otimes 2} ) ( {\mathbb 1} \otimes R \otimes {\mathbb 1} ) , \\ 
  \quad \Phi_3 &=& 
   ( {\mathbb 1} \otimes R ) (  \delta^3_{\rm BC:YI} ( \alpha \oplus \beta \oplus \gamma )   \otimes  {\mathbb 1} ) 
   ( {\mathbb 1}^{\otimes 2} \otimes R) \\
  &=&
  ( {\mathbb 1} \otimes R )
      \{  (  [ \alpha  (R \otimes \mathbb 1 ) +
 R ( \gamma \otimes \mathbb 1 ) ]  
 \\ & & 
- 
[({\mathbb 1} \otimes \mu ) \beta +
 ({\mathbb 1} \otimes  \gamma ) (R \otimes {\mathbb 1} )({\mathbb 1} \otimes  R )+ 
 \alpha ]  )  \otimes  {\mathbb 1} \}  
 ( {\mathbb 1}^{\otimes 2} \otimes R) \\ 
  \quad \Phi_4 &=&     
(  {\mathbb 1} \otimes  \delta^3_{\rm BC:YI} ( \alpha \oplus \beta \oplus \gamma )  )  ( R \otimes {\mathbb 1}^{\otimes 2} ) ( {\mathbb 1} \otimes R \otimes {\mathbb 1} ) ( {\mathbb 1}^{\otimes 2} \otimes R) \\
 &=&
   \{   {\mathbb 1} \otimes ( [ \alpha  (R \otimes \mathbb 1 ) +
 R ( \gamma \otimes \mathbb 1 ) ]  
 \\ & & 
- 
[({\mathbb 1} \otimes \mu ) \beta +
 ({\mathbb 1} \otimes  \gamma ) (R \otimes {\mathbb 1} )({\mathbb 1} \otimes  R )+ 
 \alpha ] ) \}  
 ( R \otimes {\mathbb 1}^{\otimes 2} ) ( {\mathbb 1} \otimes R \otimes {\mathbb 1} ) ( {\mathbb 1}^{\otimes 2} \otimes R) \\ 
\end{eqnarray*} 
\begin{eqnarray*} 
   \quad \Psi_1 &=&   \delta^{3,1}_{\rm YI}(\beta \oplus \alpha  ) \\
   &=&
   ( {\mathbb 1} \otimes R ) ( \alpha \otimes {\mathbb 1} ) ( {\mathbb 1}^{\otimes 2}  \otimes R )
+( {\mathbb 1} \otimes \alpha ) ( R \otimes {\mathbb 1}^{\otimes 2} ) (  {\mathbb 1} \otimes R \otimes {\mathbb 1} ) ( {\mathbb 1}^{\otimes 2}  \otimes  R ) 
\\ & & 
+
( {\mathbb 1}^{\otimes 2}  \otimes   \mu ) (    {\mathbb 1} \otimes R \otimes {\mathbb 1} ) 
( R \otimes {\mathbb 1}^{\otimes 2} ) ( {\mathbb 1} \otimes \beta) 
+ ( {\mathbb 1}^{\otimes 2}  \otimes   \mu ) ( \beta \otimes  {\mathbb 1} )
( {\mathbb 1}^{\otimes 2}  \otimes   R ) (   {\mathbb 1} \otimes R \otimes {\mathbb 1} ) \\
& & - 
\beta ( \mu \otimes  {\mathbb 1}^{\otimes 2} ) 
-
( R \otimes     {\mathbb 1} )  ({\mathbb 1} \otimes R )(\alpha \otimes {\mathbb 1} ) 
- (R\otimes \mathbb 1)(\mathbb 1\otimes \alpha)(R\otimes \mathbb 1^{\otimes 2})(\mathbb 1\otimes R\otimes \mathbb 1) \\  
 \quad  \Psi_2 &=&  \delta^{3,1}_{\rm YI}(\beta \oplus \alpha  ) ( R \otimes {\mathbb 1}^{\otimes 2} ) \\
  &=&
  \{
     ( {\mathbb 1} \otimes R ) ( \alpha \otimes {\mathbb 1} ) ( {\mathbb 1}^{\otimes 2}  \otimes R )
+( {\mathbb 1} \otimes \alpha ) ( R \otimes {\mathbb 1}^{\otimes 2} ) (  {\mathbb 1} \otimes R \otimes {\mathbb 1} ) ( {\mathbb 1}^{\otimes 2}  \otimes  R ) 
\\ & & 
+
( {\mathbb 1}^{\otimes 2}  \otimes   \mu ) (    {\mathbb 1} \otimes R \otimes {\mathbb 1} ) 
( R \otimes {\mathbb 1}^{\otimes 2} ) ( {\mathbb 1} \otimes \beta) 
+ ( {\mathbb 1}^{\otimes 2}  \otimes   \mu ) ( \beta \otimes  {\mathbb 1} )
( {\mathbb 1}^{\otimes 2}  \otimes   R ) (   {\mathbb 1} \otimes R \otimes {\mathbb 1} ) \\
& & - 
\beta ( \mu \otimes  {\mathbb 1}^{\otimes 2} ) 
-
 ( R \otimes     {\mathbb 1} )  ({\mathbb 1} \otimes R )(\alpha \otimes {\mathbb 1} )
  \\ & & 
  - (R\otimes \mathbb 1)(\mathbb 1\otimes \alpha)(R\otimes \mathbb 1^{\otimes 2})(\mathbb 1\otimes R\otimes \mathbb 1 )
  \}
  ( R \otimes {\mathbb 1}^{\otimes 2} ) \\ 
\quad \Sigma' &=&  ( {\mathbb 1}^{\otimes 2} \otimes \mu ) 
 \delta^3_{\rm YB}(\beta) \\
 &=&
  ( {\mathbb 1}^{\otimes 2} \otimes \mu ) 
  \{ 
 ( R \otimes  {\mathbb 1}^{\otimes 2}  )   ( {\mathbb 1} \otimes R \otimes {\mathbb 1} )  ( {\mathbb 1}^{\otimes 2} \otimes R )  (  \beta \otimes {\mathbb 1} ) 
 \\ & & 
  +
  ( R \otimes  {\mathbb 1}^{\otimes 2}  ) ( {\mathbb 1} \otimes \beta )  ( R \otimes  {\mathbb 1}^{\otimes 2}  )  ( {\mathbb 1} \otimes R \otimes {\mathbb 1} ) 
   \\ 
 &  & +
 (  \beta \otimes {\mathbb 1} )  ( {\mathbb 1}^{\otimes 2} \otimes R )    ( {\mathbb 1} \otimes R \otimes {\mathbb 1} ) 
  +
   ( {\mathbb 1}^{\otimes 2} \otimes R )  ( {\mathbb 1} \otimes R \otimes {\mathbb 1} )   ( R \otimes  {\mathbb 1}^{\otimes 2}  )   ( {\mathbb 1} \otimes \beta )  
   \\
   & & - 
   (  \beta \otimes {\mathbb 1} )  ( {\mathbb 1}^{\otimes 2} \otimes R )  ( {\mathbb 1} \otimes R \otimes {\mathbb 1} )   ( R \otimes  {\mathbb 1}^{\otimes 2}  )  
   -
  ( {\mathbb 1} \otimes R \otimes {\mathbb 1} )   ( R \otimes  {\mathbb 1}^{\otimes 2}  )   ( {\mathbb 1} \otimes \beta )    ( R \otimes  {\mathbb 1}^{\otimes 2}  ) 
  \\
  & &  - 
    ( {\mathbb 1} \otimes R \otimes {\mathbb 1} )    ( {\mathbb 1}^{\otimes 2} \otimes R ) (  \beta \otimes {\mathbb 1} )     ( {\mathbb 1}^{\otimes 2} \otimes R )   
    -
   ( {\mathbb 1} \otimes \beta )     ( R \otimes  {\mathbb 1}^{\otimes 2}  )
   ( {\mathbb 1} \otimes R \otimes {\mathbb 1} )    ( {\mathbb 1}^{\otimes 2} \otimes R )  
  \}
\end{eqnarray*}

To specify canceling pairs of terms, we denote each term by $\Phi_i$, $\Psi_j$, $\Sigma'$ and the order os terms in each.
For example, $(\Phi_1, 3)$ denote the first term that appear in the preceding list os terms at the third position of $\Phi_1$,
that is $- (R \otimes {\mathbb 1})( {\mathbb 1} \otimes  R) {\mathbb 1} \otimes \mu ) \beta $. Note that parenthesis is expanded
and the sign is kept. We also note that $\Phi_2$ has a negative sign in the differential, so that all its terms have reversed sign.
Then the following pairs are canceled because they are the labels assigned to the same edges by two polygons:

$$
\begin{array}{llll}
[ (\Phi_1, 1), (\Psi_2, 6) ] , &  [ ( \Phi_1, 4), (\Phi_2, 2) ]  , & [ ( \Phi_1 , 5), - (\Psi_1, 6) ] , &
[ ( \Phi_2, 3), (\Sigma', 2) ] , \\ \relax
[ (\Phi_2, 5), - (\Psi_1, 7) ] , &
[ - (\Phi_3, 1), -  (\Psi_2, 1) ] , & [ - (\Phi_3, 4), - (\Phi_4, 2) ] ,   &   [ - (\Phi_3, 5), - (\Psi_1, 1) ] , \\ \relax 
 [ - (\Phi_4, 3) ,  ( \Sigma', 1, 8 ) ] ,  &
  [ -  (\Phi_4, 5), - (\Psi_1, 2) )  ] . & & 
\end{array}
$$


There is another family of pairs of canceled edges, that come from squares in the diagram.
Two  pairs of parallel edges of a square is labeled by canceling terms modulo 2.
For example, one of the two parallel edges in the square marked by a shaded $\square$ at the left top of the diagram 
is $[ - (\Psi_2, 7), (\Phi_2,1) ]$, where
\begin{eqnarray*}
- ( \Psi_2, 7) &=&      (R\otimes \mathbb 1)(\mathbb 1\otimes \alpha)(R\otimes \mathbb 1^{\otimes 2})(\mathbb 1 \otimes R\otimes \mathbb 1 )
  ( R \otimes {\mathbb 1}^{\otimes 2} )  \\
(\Phi_2,1) &=& (R \otimes {\mathbb 1} )     ( {\mathbb 1} \otimes    \alpha  ) ( {\mathbb 1} \otimes  R \otimes \mathbb 1 )  ( R \otimes {\mathbb 1}^{\otimes 2} ) ( {\mathbb 1} \otimes R \otimes {\mathbb 1} ) 
\end{eqnarray*}
and they are the same after applying YBE. Hence with $\Z_2$ coefficient they cancel.
The other parallel edges of this square gives the  pair $[ (\Phi_1, 3) ( \Sigma', 1) ] $. 
The other pairs cancel modulo 2  in a similar manner, after applying one of the equations assumed.
Such pairs that appear in pairs as parallel edges of the bottom two squares are:
$$
[ (\Psi_1, 5), (\Psi_2, 5) ] ,  [ ( \Phi_1, 2) , \Phi_3, 2) ] ,  [ ( \Phi_3,3), ( \Sigma' , 7)], [ (\Psi_2, 2), ( \Phi_4, 1) ] . 
$$
There are two adjacent squares at the top right corner of the figure, 
and the adjacent edge cancel, so that we obtain the following pairs for these two squares:
$$ 
[ ( \Psi_1, 4), (\Sigma', 3)],  [ ( \Psi_1, 3), (\Sigma', 4)], [ ( \Phi_2, 4), ( \Phi_4, 4) ] .
$$
This completes all pairs, and the statement follows.
\end{proof}

A diagram representing this differential is shown in Figure~\ref{fig:BCd41}. 
Since 3-differentials are represented by graph polygons as in Figures~\ref{BCdiff3} and \ref{YII},
these polyhedra also represent generating 3-cochains.
Then it is expected that 4-differentials are represented by polytopes formed by faces consisting of these polyhedra representing 3-cochains, and the summation of these 3-cochcins according to union of faces represent a 4-cocycle condition.
The figure shows a projection of such a polytope in the plane.
There are four pentagons representing $\Phi$ that corresponds to Figures~\ref{fig:BCd41}, two heptagons for $\Psi$ corresponding to Figure~\ref{YII}
 and an octagon for $\Sigma$ corresponding to Figure~\ref{fig:tetra}.

\begin{figure}[htb]
	\begin{center}
		\includegraphics[width=5.5in]{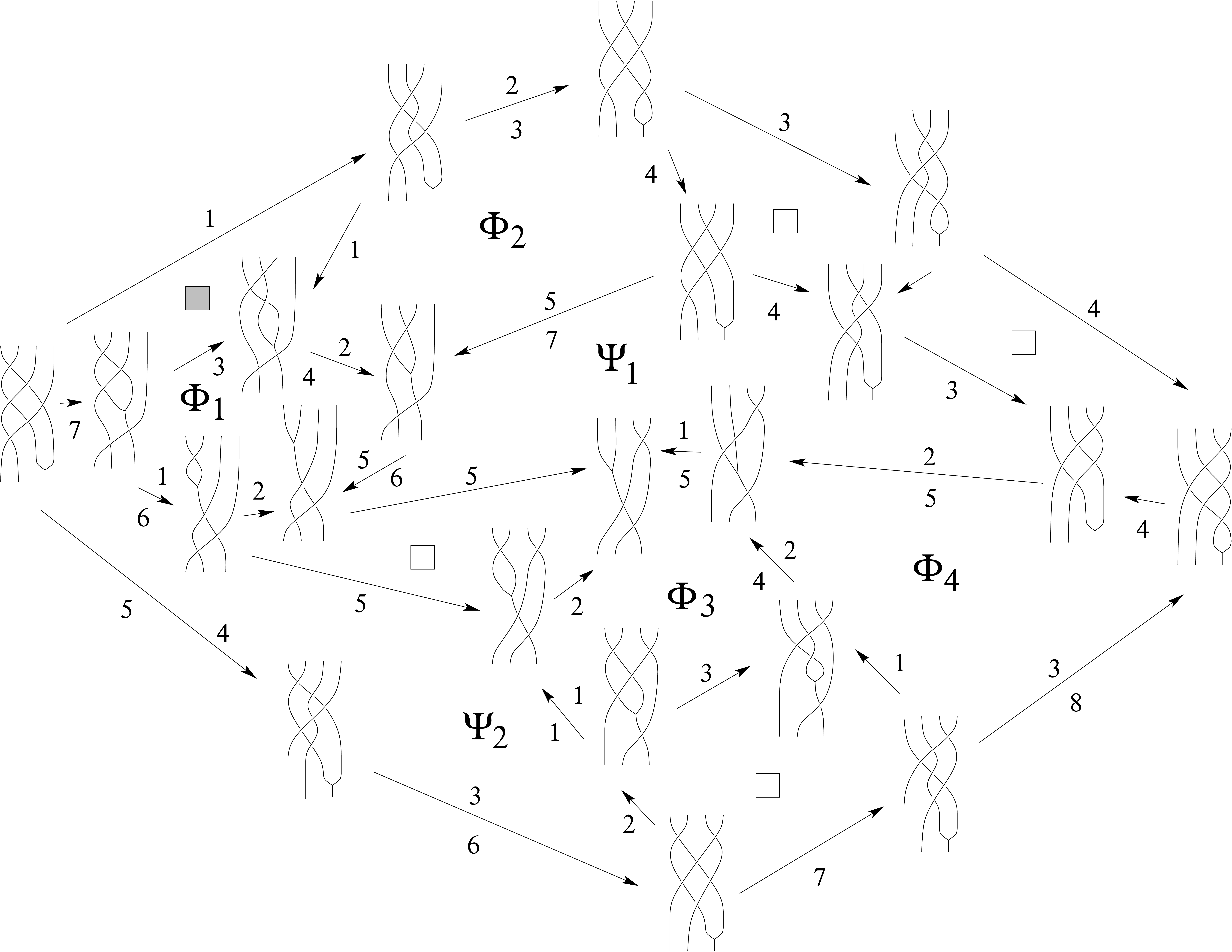}
	\end{center}
	\caption{}
	\label{fig:BCd41}
\end{figure}

In the figure, for each polygon that represent $\Phi$ or $\Psi$, the numbers are indicated to edges inside the polyhedron.
These numbers represent the order of terms in the defining formulas of $\Phi_i$ and $\Psi_j$. 
Each edge is shared by two polyhedra, and the corresponding terms cancel.
For example, the edge of $\Phi_1$ labeled 1 is shared by the edge 6 of $\Psi_2$, indicating these two terms cancel. 
The square regions marked by $\square$ represent equalities of distant maps commute.
These squares require a characteristic 2 ground ring, and it remains to be seen whether fourth differentials can be defined 
over integers.

There are other 4-differential that can be obtained in a similar manner.
Finding  formulas of all 4-differentials in such a way that $\delta^2=0$ is satisfied
using diagrams appears equally complex. Although the diagrammatic approach seems helpful to obtain some insights,
 developing other methods for higher dimensions is desirable.

\end{document}